\newtheorem{theorem}{Theorem}[section]
\newtheorem{proposition}[theorem]{Proposition}
\newtheorem{lemma}[theorem]{Lemma}
\newtheorem{corollary}[theorem]{Corollary}
\newtheorem{definition}[theorem]{Definition}   
\newtheorem{conjecture}[theorem]{Conjecture}
\theoremstyle{remark}
\newtheorem{remark}[theorem]{Remark}
\def\S{\mathcal{S}}   
\DeclareMathOperator\inv{inv}
\DeclareMathOperator\st{st}
\def\eps{\epsilon}
\def\P{\mathbb{P}}
\def\e{\mathbb{E}}
\def\Ov{\mbox{{\rm Ov}}}
\title{The probability of avoiding consecutive patterns in the Mallows distribution}
\author{Harry Crane\thanks{Department of Statistics \& Biostatistics, Rutgers University, 110 Frelinghuysen Avenue, Piscataway, NJ 08854, USA. E-mail: \texttt{hcrane@stat.rutgers.edu}. Partially supported by NSF grants CNS-1523785 and CAREER DMS-1554092.} \and Stephen DeSalvo\thanks{Department of Mathematics, University of California, Los Angeles, 520 Portola Plaza, Los Angeles, CA 90095.}  \and Sergi Elizalde\thanks{Department of Mathematics, Dartmouth College, 6188 Kemeny Hall, Hanover, NH 03755, USA. E-mail: \texttt{sergi.elizalde@dartmouth.edu}. Partially supported by Simons Foundation grant \#280575 and NSA grant H98230-14-1-0125.}}
\date{}
\begin{document}

\maketitle

\begin{abstract}
We use various combinatorial and probabilistic techniques to study growth rates for the probability that a random permutation from the Mallows distribution avoids consecutive patterns.
The Mallows distribution behaves like a $q$-analogue of the uniform distribution by weighting each permutation $\pi$ by $q^{\inv(\pi)}$, where $\inv(\pi)$ is the number of inversions in $\pi$ and $q$ is a positive, real-valued parameter.
We prove that the growth rate exists for all patterns and all $q>0$, and we generalize Goulden and Jackson's cluster method to keep track of the number of inversions in permutations avoiding a given consecutive pattern.
Using singularity analysis, we approximate the growth rates for length-3 patterns, monotone patterns, and non-overlapping patterns starting with 1, and we compare growth rates between different patterns.
We also use Stein's method to show that, under certain assumptions on $q$, the length of $\sigma$, and $\inv(\sigma)$, the number of occurrences of a given pattern $\sigma$ is well approximated by the normal distribution.

\end{abstract}

\noindent \textit{Keywords:} consecutive pattern, permutation, Mallows distribution, inversion, growth rate, cluster method, Stein's method \medskip

\noindent \textit{Mathematics subject classification:} 05A05, 60C05, %Combinatorial Probability
05A15, 05A16, 62E17, %(Approximations to distributions (nonasymptotic)), 
62E20, %(Asymptotic distribution theory), 
05A30

\section{Introduction}
\label{sec:intro}

Let $\S_n$ be the set of permutations of $[n]=\{1,\ldots,n\}$.
Given a list of $m$ distinct integers $w=w_1\dots w_m$, the {\em standardization} of $w$, written $\st(w)$, is the unique permutation of $[m]$ that is order-isomorphic to $w$.
We obtain $\st(w)$ by replacing the smallest element among $\{w_1,\ldots,w_m\}$ with $1$, the second smallest with $2$, and so on.

We say that $\pi\in\S_n$ {\em contains $\sigma\in\S_m$ consecutively} if there exists an index $j\in[n-m+1]$ such that $\st(\pi_j\pi_{j+1}\dots\pi_{j+m-1})=\sigma$; otherwise, we say that $\pi$ {\em avoids $\sigma$ consecutively}.
In this context, we call $\sigma$ a {\em pattern} and we write $\S_n(\sigma)$ to denote the set of permutations in $\S_n$ that avoid $\sigma$ as a consecutive pattern.   
The systematic study of consecutive patterns originated in \cite{EliNoy,EliNoy2} and has expanded in various directions since.
Although much of the literature on pattern avoidance deals with classical pattern avoidance, whereby $\pi\in\S_n$ avoids $\sigma\in\S_m$ only if $\st(\pi_{i_1}\dots\pi_{i_m})\neq\sigma$ for all length $m$ subsequences $1\leq i_1<\dots<i_m\leq n$, consecutive pattern avoidance has received significant attention in the last 15 years; see~\cite{Elisurvey} for a survey.
Except for some references in the introduction, we speak only of consecutive pattern avoidance in this paper, even if not explicitly stated.

In this paper we connect prior work on consecutive pattern avoidance to ongoing developments at the interface of pattern avoidance and random permutations.
Classical enumerative studies of pattern avoidance admit a straightforward probabilistic interpretation, as the problem of enumerating permutations avoiding a consecutive pattern $\sigma\in\S_m$ is equivalent to finding the probability that a uniform random permutation avoids $\sigma$.
This observation makes a variety of combinatorial and probabilistic techniques available for studying pattern avoidance.
So far, these connections have been mostly confined to the equivalence between enumeration and uniform random permutations.
Below we bring tools from analytic combinatorics and probability theory to bear on the behavior of pattern avoidance for a widely studied class of non-uniform random permutations.

A straightforward and natural way to generate a class of non-uniform distributions is by exponential tilting of the uniform distribution with respect to some statistic $t:\mathcal{S}_n\to\mathbb{R}$ on the space of permutations.
In this generic setting, we assign probability 
\begin{equation}\label{eq:exp}\frac{q^{t(\pi)}}{\sum_{\sigma\in\mathcal{S}_n}q^{t(\sigma)}}\end{equation}
to each $\pi\in\mathcal{S}_n$.
Any such model specializes to the uniform distribution upon setting $q=1$.
For example, taking $t(\pi)$ equal to the number of cycles in $\pi$ yields the well known Ewens sampling formula \cite{CraneESF,Ewens1972}.
Previous authors \cite{cameronkilpatrick,SaganDokos2012} consider certain questions of pattern avoidance, such as analogs of Wilf equivalence, in a setting equivalent to \eqref{eq:exp}.

Owing to its natural importance in the context of ranking statistics, we study pattern avoidance probabilities under the setting of \eqref{eq:exp} with $t(\pi)$ taken to be the number of inversions in $\pi$.
An {\em inversion} in $\pi=\pi_1\dots\pi_n\in\S_n$ is a pair of indices $(i,j)$ with $i<j$ and $\pi_i>\pi_j$.
For a real-valued parameter $q>0$, the {\em Mallows distribution} with parameter $q$ on $\S_n$---sometimes denoted Mallows($q$)---assigns probability 
\begin{equation}\label{eq:Mallows}
\frac{q^{\inv(\pi)}}{[n]_q!}
\end{equation}
to each $\pi\in\S_n$, where $\inv(\pi)$ is the number of inversions in $\pi$, $[n]_q=1+q+\dots+q^{n-1}$, $[n]_q!=[1]_q [2]_q \dots [n]_q$, and $\binom{m+n}{m}_q=\frac{[m+n]_q!}{[m]_q![n]_q!}$ are the usual $q$-analogues with $q$ treated as a real-valued parameter.

Mallows's distribution \cite{Mallows1957} is a canonical statistical model for ranking data, with many useful statistical properties stemming from its exponential family structure endowed from \eqref{eq:exp} and other probabilistic properties; see, e.g., \cite[Chapters 6 and 7]{FV} for more discussion of its statistical applications.
More recent work on the Mallows distribution reveals connections to a range of topical problems in statistical physics \cite{SS}, Markov chain Monte Carlo \cite{DR}, the longest increasing subsequence problem \cite{BasuBhatnagar2016,Bhatnagar2014}, cycle lengths in random permutations \cite{GladkichPeled2016}, and quasi-exchangeable sequences \cite{GnedinOlshanski2009,GnedinOlshanski2010}.

Others have recently studied characteristics of classical pattern avoidance for uniform random permutations \cite{HoffmanRizzolo2016I,HoffmanRizzolo2016II,MinerPak}, with the former two references tying characteristics of certain pattern avoiding permutations to Brownian excursion, a fundamental probabilistic object.
Perarnau \cite{Perarnau} uses probabilistic techniques to give an alternate proof of the fact that, among consecutive patterns of a given length, the monotone pattern is avoided by the largest number of permutations of length $n$, for sufficiently large $n$. 
A proof of this fact using singularity analysis of generating function was previously given in~\cite{EliCMP}.
In \cite{CraneDeSalvo2015}, two of the authors used the technique of Poisson approximation to estimate pattern avoidance probabilities for sufficiently large patterns in random permutations from the Mallows distribution. 
The method in \cite{CraneDeSalvo2015} breaks down for fixed patterns of small size.

Our analysis below builds on these other recent developments at the interface of probability and combinatorics in several distinct ways.
Much of the contents of Sections \ref{section:GJ}-\ref{section:compare} extend the techniques from singularity analysis and analytic combinatorics developed in \cite{EliNoy2,GJ79}.
In Section \ref{section:bounds}, we use probabilistic techniques to obtain bounds on the growth rate for arbitrary patterns, generalizing the outcomes in \cite{Perarnau}. 
In Section \ref{section:Stein}, we conclude our discussion by studying complementary behavior of pattern avoidance in Mallows permutations using the probabilistic technique of Stein's method.
Whereas the techniques of Sections \ref{section:GJ}-\ref{section:bounds} zero in on the growth rate of the probability of pattern avoidance, our analysis in Section \ref{section:Stein} considers the asymptotic distribution of the number of occurrences of a given pattern in a random permutation from the Mallows distribution, which also 
gives the rate of convergence to the Gaussian distribution, providing a complementary perspective to the results of Sections \ref{section:growth}-\ref{section:compare}.
The results of Section \ref{section:Stein} are in the same vein as other work of \cite{NakamuraJanson2015}, who studied the asymptotic distribution of the number of occurrences of patterns in uniform random permutations, and Janson \cite{Janson2016}, who studied the behavior of random permutations drawn uniformly from the set of 132-avoiding permutations.

\subsection{Preliminaries}

For $q>0$ and $\sigma\in\S_m$, we write $P_n(\sigma,q)$ to denote the probability that a random permutation from distribution \eqref{eq:Mallows} avoids $\sigma$ and
\begin{equation}\label{eq:F-sigma}
F_\sigma(q,z)=\sum_{n\ge0} P_n(\sigma,q) z^n\end{equation}
to denote the corresponding generating function.
By definition $$P_n(\sigma,q)=\sum_{\pi\in\S_n(\sigma)} \frac{q^{\inv(\pi)}}{[n]_q!},$$
so that
 $$F_\sigma(q,z)=\sum_{n\ge0}\sum_{\pi\in\S_n(\sigma)} q^{\inv(\pi)} \frac{z^n}{[n]_q!}$$
is the $q$-exponential generating function for $\sigma$-avoiding permutations with respect to the inversion number.
In fact, the same argument works for any family of subsets $A_n\subseteq\S_n$: the ordinary generating function for the probability that a permutation in $\S_n$ belongs to $A_n$ equals the $q$-exponential generating function for permutations in $A_n$ with respect to the inversion number.

The definition of $F_\sigma(q,z)$ can be generalized in order to consider not only permutations that avoid $\sigma$ but also all permutations with respect to the number of occurrences of $\sigma$. Let $c_\sigma(\pi)$ denote the number of occurrences of $\sigma$ in $\pi$ as a consecutive pattern, and let
\begin{equation}
\label{eq:tildeF} \tilde{F}_\sigma(q,u,z)=\sum_{n\ge0}\sum_{\pi} q^{\inv(\pi)} u^{c_\sigma(\pi)}\frac{z^n}{[n]_q!}
\end{equation}
so that $\tilde{F}_\sigma(q,0,z)=F_\sigma(q,z)$.

The Mallows distribution possesses several nice properties that are amenable to the study of pattern avoidance.
One immediately useful observation is that the probability of a permutation $\pi\in\S_n$ under the Mallows distribution with parameter $q^{-1}$ equals
$$\frac{q^{-\inv(\pi)}}{[n]_{q^{-1}}!}=\frac{q^{\binom{n}{2}-\inv(\pi)}}{[n]_{q}!}=\frac{q^{\inv(\pi^r)}}{[n]_{q}!},$$
that is, the probability of the permutation $\pi^r:=\pi_n\dots\pi_1$ under the Mallows distribution with parameter $q$.
The same holds by replacing $\pi^r$ with $\pi^c=(n+1-\pi_1)\dots(n+1-\pi_n)$; whence, 
\begin{equation}\label{eq:reversal}
F_{\sigma^r}(q,z)=F_{\sigma^c}(q,z)=F_{\sigma}(1/q,z)=F_{\sigma^{rc}}(1/q,z).
\end{equation}

Two other crucial properties of the Mallows distribution, whose combinatorial equivalent is given in Lemma~\ref{lem:product}, are weak dissociation and consecutive homogeneity.
Weak dissociation says that for any non-overlapping sets of indices $\alpha = \{i, i+1, \ldots, i+k-1\}$ and $\beta = \{j, j+1, \ldots, j+\ell-1\}$, i.e., $\alpha \cap \beta = \emptyset$, the probability that a given pattern of size~$k$ occurs at the indices in $\alpha$ is independent of the probability that any other pattern of size~$\ell$ occurs at the indices of $\beta$. 
Consecutive homogeneity says that for $\pi=\pi_1\dots\pi_n$ from the Mallows($q)$ distribution on $\S_n$, $1\leq i<i+m-1\leq n$, and any $\sigma\in\S_m$, the pattern $\st(\pi_i\pi_{i+1}\dots\pi_{i+m-1})$ is distributed according to Mallows($q$) on $\S_m$.
These two properties appear repeatedly throughout our analysis below.

\subsection{Outline}
We organize the rest of the paper as follows.
In Section \ref{section:GJ}, we generalize the cluster method of Goulden and Jackson \cite{GJ79} to keep track of the inversion number.
In Section \ref{section:growth}, we show that the growth rate, defined as $\lim_{n\to\infty}P_n(\sigma,q)^{1/n}$, exists for all patterns $\sigma$ and all $q>0$.
In Section \ref{section:monotone}, we apply the method of Section \ref{section:GJ} to approximate the growth rate of pattern avoidance probabilities for monotone patterns (with a precise description for certain values of $q$).
In Section \ref{section:132}, we treat the pattern $132$, for which we can determine the growth rate in all cases, and generalize the argument to all non-overlapping patterns starting with a 1.
In Section \ref{section:compare}, we compare the growth rates between the different patterns of length~3.
In Section \ref{section:bounds}, we prove upper and lower bounds for the growth rate of arbitrary patterns.
In Section \ref{section:Stein}, we apply Stein's method to $m$-dependent random variables to obtain a central limit theorem governing the number of occurrences of a given pattern, with explicit error rates.

\section{The cluster method with respect to inversion number}
\label{section:GJ}

The proof of the following lemma uses the well-known fact that if $W$ is the set of words consisting of $m$ zeros and $n$ ones, then 
\begin{equation}\label{eq:qbin}
\sum_{w \in W} q^{\inv(w)}=\binom{m+n}{m}_q.
\end{equation} 

\begin{lemma}\label{lem:product}
Let $A\subseteq\S_m$, $B\subseteq\S_n$, and 
$$C=\{\pi\in\S_{m+n}:\st(\pi_1\dots\pi_m)\in A, \st(\pi_{m+1}\dots\pi_{m+n})\in B\}.$$
Then
$$\left(\sum_{\pi\in A} \frac{q^{\inv(\pi)}}{[m]_q!}\right) \left(\sum_{\pi\in B} \frac{q^{\inv(\pi)}}{[n]_q!}\right)
= \sum_{\pi\in C} \frac{q^{\inv(\pi)}}{[m+n]_q!}.$$
\end{lemma}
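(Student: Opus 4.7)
The plan is to set up a simple bijection between $C$ and triples $(\alpha,\beta,S)$ where $\alpha\in A$, $\beta\in B$, and $S\subseteq [m+n]$ with $|S|=m$, and then decompose $\inv(\pi)$ along this bijection so that the $q$-binomial identity \eqref{eq:qbin} does all the work.

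First I would describe the bijection: given $\pi\in C$, let $S=\{\pi_1,\dots,\pi_m\}$, $\alpha=\st(\pi_1\dots\pi_m)\in A$, and $\beta=\st(\pi_{m+1}\dots\pi_{m+n})\in B$. Conversely, given the triple $(\alpha,\beta,S)$, place the elements of $S$ (in increasing order) into positions $1,\dots,m$ according to $\alpha$, and the elements of $[m+n]\setminus S$ into positions $m+1,\dots,m+n$ according to $\beta$. This is clearly a bijection between $C$ and $A\times B\times\binom{[m+n]}{m}$.

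Next I would decompose the inversion count. Every inversion of $\pi$ is either internal to the first block, internal to the second block, or a ``cross'' inversion with $i\le m<j$ and $\pi_i>\pi_j$. Because standardization preserves relative order, the internal inversions contribute exactly $\inv(\alpha)$ and $\inv(\beta)$. Writing $X(S)$ for the number of cross inversions, one checks that $X(S)$ depends only on $S$: it equals the number of pairs $(a,b)$ with $a\in S$, $b\in [m+n]\setminus S$, and $a>b$. Encoding $S$ by the binary word $w\in\{0,1\}^{m+n}$ with $w_k=0$ if $k\in S$ and $w_k=1$ otherwise, this count is exactly $\inv(w)$, so by \eqref{eq:qbin},
\[
\sum_{|S|=m} q^{X(S)}=\binom{m+n}{m}_q.
\]

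Putting the pieces together,
\[
\sum_{\pi\in C} q^{\inv(\pi)}=\sum_{\alpha\in A}\sum_{\beta\in B}\sum_{|S|=m}q^{\inv(\alpha)+\inv(\beta)+X(S)}=\left(\sum_{\alpha\in A} q^{\inv(\alpha)}\right)\left(\sum_{\beta\in B} q^{\inv(\beta)}\right)\binom{m+n}{m}_q.
\]
Dividing both sides by $[m+n]_q!$ and using $\binom{m+n}{m}_q=[m+n]_q!/([m]_q![n]_q!)$ yields the claim. The only step requiring any real care is verifying the inversion decomposition $\inv(\pi)=\inv(\alpha)+\inv(\beta)+X(S)$ and the identification of $X(S)$ with $\inv(w)$ in the sense of \eqref{eq:qbin}; everything else is bookkeeping.
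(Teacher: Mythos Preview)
Your proof is correct and is essentially identical to the paper's own argument: both set up the same bijection $\pi\leftrightarrow(\alpha,\beta,S)$ (the paper phrases it directly via the word $w$ rather than the set $S$, but your encoding $w_k=0\iff k\in S$ is exactly theirs), decompose $\inv(\pi)=\inv(\alpha)+\inv(\beta)+\inv(w)$, and then invoke \eqref{eq:qbin}. There is nothing to add.
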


\begin{proof}
Given $\sigma\in A$ and $\tau\in B$, there are $\binom{m+n}{m}$ permutations
$\pi\in\S_{m+n}$ such that $\st(\pi_1\dots\pi_m)=\sigma$ and $\st(\pi_{m+1}\dots\pi_{m+n})=\tau$, obtained by choosing which
$m$ of the values $1,\ldots,m+n$ will be placed in the first $m$ entries. Such a choice can be encoded by a word $w$ of length $m+n$ where $w_i=0$ if the value $i$ appears in the first $m$ entries and $w_i=1$ otherwise.
We then have $\inv(\pi)=\inv(\sigma)+\inv(\tau)+\inv(w)$. Denoting by $W$ the set of words consisting of $m$ zeros and $n$ ones, we have
$$\sum_{\pi\in C} q^{\inv(\pi)}
=\sum_{\sigma\in A} \sum_{\tau\in B} \sum_{w \in W} q^{\inv(\sigma)+\inv(\tau)+\inv(w)}
=\left(\sum_{\sigma\in A} q^{\inv(\sigma)}\right)\left(\sum_{\tau\in B} q^{\inv(\tau)}\right)
\left(\sum_{w \in W} q^{\inv(\tau)}\right).$$
The result now follows from Equation~\eqref{eq:qbin}.
\end{proof}

\begin{remark}
Lemma \ref{lem:product} can be deduced by interpreting the sums as probabilities and appealing to the weak dissociation and consecutive homogeneity properties of the Mallows$(q)$ distribution. See \cite[Section 5]{CraneDeSalvo2015} for a more detailed account of these properties.
\end{remark}

Lemma~\ref{lem:product} allows us to generalize the cluster method of Goulden and Jackson~\cite{GJ79} in order to be able to keep track of the inversion number. 
For $\sigma\in\S_m$, we say that $(\pi;i_1,i_2,\dots,i_k)$ is a {\em $k$-cluster of length $n$ with respect to $\sigma$} if
\begin{itemize}
\item $\pi\in\S_n$,
\item $1=i_1<i_2<\dots<i_k=n-m+1$,
\item $i_{j+1}\le i_j+m-1$ for all $j$, and 
\item $\st(\pi_{i_j}\pi_{i_j+1}\dots\pi_{i_j+m-1})=\sigma$ for all $j$.
\end{itemize}
One can think of a $k$-cluster as a permutation $\pi$ with $k$ marked occurrences of $\sigma$ that start at the positions $i_j$,
each marked occurrence overlaps the next one, and the first and last marked occurrences are all the way at the beginning and at the end of $\pi$, respectively. For example $(192834756;1,3,6)$ is a $3$-cluster with respect to $1423$ and $(12345678;1,4,5)$ is a 
$3$-cluster with respect to $1234$. Note that $\pi$ may have additional occurrences of $\sigma$ aside from the marked ones.
The number of inversions of the cluster is defined to be $\inv(\pi)$.

Let $c_\sigma(i,k,n)$ denote the number of $k$-clusters of length $n$ with respect to $\sigma$ having $i$ inversions. 
For example, $c_\sigma(\inv(\sigma),1,m)=1$ for any $\sigma\in\S_m$ and $c_{132}(4,2,5)=1$ because of the cluster
$(15243;1,3)$. 
Let 
\begin{equation}\label{eq:defC}
C_\sigma(q,t,z)=\sum_{n,k,i}c_\sigma(i,k,n)q^it^k\frac{z^n}{[n]_q!}
\end{equation}
be the corresponding $q$-exponential generating function.

Recall the definitions of $F_{\sigma}(q,z)$ and $\tilde{F}_{\sigma}(q,u,z)$ from Equations \eqref{eq:F-sigma} and \eqref{eq:tildeF}.  The following is a generalization of the cluster method that keeps track of the inversion number.

\begin{theorem}\label{thm:cluster}
For every $\sigma\in\S_m$,
$$\tilde{F}_\sigma(q,u,z)=\left(1-z-C_\sigma(q,u-1,z)\right)^{-1}.$$
In particular, setting $u=0$,
$$F_\sigma(q,z)=\left(1-z-C_\sigma(q,-1,z)\right)^{-1}.$$
\end{theorem}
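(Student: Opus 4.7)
The plan is to derive the functional equation by expanding $u^{c_\sigma(\pi)}$ via the binomial identity $u^{c_\sigma(\pi)}=\sum_{S}(u-1)^{|S|}$, where $S$ ranges over all subsets of the starting positions of occurrences of $\sigma$ in $\pi$. This standard reformulation turns $\tilde{F}_\sigma(q,u,z)$ into a sum over pairs $(\pi,S)$ weighted by $q^{\inv(\pi)}(u-1)^{|S|}\frac{z^n}{[n]_q!}$, so that $S$ plays the role of a distinguished set of \emph{marked} occurrences, to be grouped into clusters.

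The next step is to decompose each pair $(\pi,S)$ into a linear sequence of blocks. Call an index $p\in[n]$ \emph{covered} if $i\le p\le i+m-1$ for some $i\in S$, and \emph{free} otherwise. The covered indices partition into maximal runs of consecutive integers; each such run, together with the elements of $S$ that start within it, forms a $k$-cluster in the sense defined before the theorem (the overlap condition $i_{j+1}\le i_j+m-1$ is automatic since the covered set of the run is an interval), while each free index is a singleton block. Reading blocks from left to right yields a unique ordered sequence of blocks (free letters and clusters) that partitions $[n]$; conversely, any such sequence of blocks, together with a choice of which values in $[n]$ go into each block, assembles into a unique pair $(\pi,S)$.

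The third step is to evaluate the $q$-exponential generating function of this sequence construction using Lemma~\ref{lem:product}. A single free letter contributes $z$ (no inversions, no marked occurrences), while a cluster of length $n$ with $k$ marked occurrences and $i$ inversions contributes $q^i(u-1)^k\frac{z^n}{[n]_q!}$; summing over all clusters yields $C_\sigma(q,u-1,z)$ by definition~\eqref{eq:defC} with $t=u-1$. By Lemma~\ref{lem:product}, the $q$-EGF of two consecutive blocks equals the product of their individual $q$-EGFs, the $q$-binomial coefficient in that lemma absorbing the shuffle inversions between the two blocks. Induction on the number of blocks then shows that the $q$-EGF of a length-$j$ sequence of blocks equals $(z+C_\sigma(q,u-1,z))^j$, and summing over $j\ge 0$ produces the geometric series $\left(1-z-C_\sigma(q,u-1,z)\right)^{-1}$. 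Setting $u=0$ in the resulting identity recovers the stated formula for $F_\sigma(q,z)$.

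The main obstacle I anticipate is verifying cleanly that the block decomposition is a bijection whose inversion count is additive up to the shuffle term supplied by Lemma~\ref{lem:product}. In particular, one must confirm that free letters sit between clusters in such a way that no ``hidden'' extra occurrence of $\sigma$ is accidentally absorbed into or torn off from a cluster when we pass to standardizations, and that iterated application of Lemma~\ref{lem:product} across a sequence of blocks does not introduce spurious factors. Once these combinatorial bookkeeping points are settled, the rest of the argument is the standard Goulden--Jackson sequence-of-clusters derivation, upgraded from ordinary EGFs to $q$-EGFs via Lemma~\ref{lem:product}.
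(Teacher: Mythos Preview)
Your proposal is correct and follows essentially the same route as the paper: both use the binomial expansion $u^{c_\sigma(\pi)}=\sum_S(u-1)^{|S|}$ to pass to marked occurrences, decompose uniquely into clusters and singleton free letters, and invoke Lemma~\ref{lem:product} to multiply the $q$-EGFs block by block into a geometric series. Your worry about ``hidden'' occurrences is unfounded---the decomposition depends only on the marked set $S$, and unmarked occurrences of $\sigma$ may freely straddle block boundaries without affecting anything (the paper notes this explicitly).
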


\begin{proof}
By {\em concatenation} of two permutations $\rho\in\S_a$ and $\tau\in\S_b$ we mean any permutation $\pi\in\S_{a+b}$ such that $\st(\pi_1\dots\pi_a)=\rho$ and $\st(\pi_{a+1}\dots\pi_{a+b})=\tau$.
By {\em permutation with some marked occurrences of $\sigma$} we mean a permutation $\pi$ together with a set $S$ of indices so that $\pi$ has an occurrence of $\sigma$ starting at each $i\in S$, and possibly other occurrences as well.

Every permutation with marked occurrences of $\sigma$ can be decomposed uniquely as a concatenation of clusters with respect to $\sigma$ and single entries (those that do not belong to any marked occurrence). 
It follows that the generating function $\left(1-z-C_\sigma(q,t,z)\right)^{-1}$ counts permutations $\pi$ with marked occurrences of $\sigma$, where the exponent of $z$ is the length of $\pi$, the exponent of $t$ is the number of marked occurrences of $\sigma$ in $\pi$ (which is also an additive parameter), and the exponent of $q$ is the total number of inversions of $\pi$.
Indeed, by Lemma~\ref{lem:product}, the product of the $q$-exponential generating functions of two sets of permutations $A$ and $B$ keeps track of the number of inversions of all the possible concatenations of a permutation from $A$ with a permutation from $B$. For example, in the expansion of $\left(1-z-C_\sigma(q,t,z)\right)^{-1}=\sum_{i\ge0} (z+C_\sigma(q,t,z))^i$, the term $C_\sigma(q,t,z)\, z\, C_\sigma(q,t,z)$ corresponds to all possible concatenations consisting of a cluster, followed by a single entry, followed by another cluster.

To this generating function, a permutation $\pi\in\S_n$ with a total of $\ell$ occurrences of $\sigma$ contributes a term $t^k z^n/n!$ for each of the $\binom{\ell}{k}$ ways to mark $k$ of these $\ell$ occurrences, for every $k\le\ell$.
Thus, the contribution of $\pi$ to $\left(1-z-C_\sigma(q,u-1,z)\right)^{-1}$ is
$$\sum_{k=0}^\ell \binom{\ell}{k} q^{\inv(\pi)}(u-1)^k \frac{z^n}{[n]_q!}=q^{\inv(\pi)}u^\ell \frac{z^n}{[n]_q!},$$
which agrees with the contribution of $\pi$ to $\tilde{F}_\sigma(q,u,z)$.
Setting $u=0$, this contribution is $q^{\inv(\pi)} \frac{z^n}{[n]_q!}$ if $\pi$ avoids $\sigma$, and $0$ otherwise. 
It follows that $\left(1-z-C_\sigma(q,-1,z)\right)^{-1}=F_\sigma(q,z)$.
\end{proof}

\begin{theorem}[\cite{Elisurvey}]\label{thm:monotone}
$$F_{12\dots m}(q,z)=\left( \sum_{j\ge0} \frac{z^{jm}}{[jm]_q!} - \sum_{j\ge0} \frac{z^{jm+1}}{[jm+1]_q!} \right)^{-1}$$
\end{theorem}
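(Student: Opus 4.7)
The plan is to apply Theorem~\ref{thm:cluster} with $\sigma = 12\dots m$, reducing the task to an explicit computation of $C_{12\dots m}(q,-1,z)$. The main structural observation is that \emph{every cluster with respect to the monotone pattern is an increasing permutation}. Indeed, if $(\pi;i_1,\dots,i_k)$ is a $k$-cluster of length $n$ with respect to $12\dots m$, then each window $\pi_{i_j}\pi_{i_j+1}\dots\pi_{i_j+m-1}$ is increasing, and since consecutive windows share at least one position (because $i_{j+1}\le i_j+m-1$), these increasing blocks chain together: inductively the values $\pi_{i_1},\pi_{i_1+1},\dots,\pi_{i_k+m-1}=\pi_1,\dots,\pi_n$ are strictly increasing, so $\pi=12\dots n$. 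In particular every cluster has zero inversions, and
\begin{equation*}
C_{12\dots m}(q,-1,z) = \sum_{n\ge m} \frac{z^n}{[n]_q!}\sum_{k\ge 1}(-1)^k N(k,n),
\end{equation*}
where $N(k,n)$ is the number of $k$-clusters of length $n$ with respect to $12\dots m$ (the $q$-dependence is entirely absorbed into the denominators $[n]_q!$).

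Next I would enumerate $N(k,n)$ by parameterizing a cluster by its gap sequence $d_j:=i_{j+1}-i_j$ for $j=1,\dots,k-1$. The cluster conditions force $d_j\in\{1,\dots,m-1\}$, and the length condition gives $\sum_j d_j = n-m$. Hence $N(k,n)$ equals the number of compositions of $n-m$ into $k-1$ parts from $[m-1]$, with $N(1,m)=1$ corresponding to the empty composition.

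The crux is then the alternating sum. Summing the geometric series
\begin{equation*}
\sum_{\ell\ge0}(-1)^{\ell}\bigl(y+y^2+\dots+y^{m-1}\bigr)^{\ell} = \frac{1}{1+(y+y^2+\dots+y^{m-1})}=\frac{1-y}{1-y^m},
\end{equation*}
and using $\ell=k-1$, one obtains
\begin{equation*}
\sum_{k\ge1}(-1)^k N(k,n) = [y^{n-m}]\,\frac{y-1}{1-y^m} = [y^{n-m}]\sum_{j\ge0}(y^{jm+1}-y^{jm}),
\end{equation*}
which equals $-1$ if $n$ is a positive multiple of $m$, equals $+1$ if $n\equiv 1\pmod m$ with $n\ge m+1$, and is $0$ otherwise.

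Substituting back yields
\begin{equation*}
C_{12\dots m}(q,-1,z) = -\sum_{j\ge1}\frac{z^{jm}}{[jm]_q!} + \sum_{j\ge1}\frac{z^{jm+1}}{[jm+1]_q!},
\end{equation*}
and combining with $F_{12\dots m}(q,z)=(1-z-C_{12\dots m}(q,-1,z))^{-1}$ from Theorem~\ref{thm:cluster}, the constants $1$ and $-z$ are absorbed as the $j=0$ terms of the two sums, giving the claimed identity. I do not expect any real obstacle: the conceptual step is the rigidity of monotone clusters, after which the remainder is a routine generating function computation for compositions with bounded parts.
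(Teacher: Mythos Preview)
Your proposal is correct and follows essentially the same approach as the paper: both reduce via the cluster method to computing $C_{12\dots m}(q,-1,z)$, observe that monotone clusters have underlying permutation $12\dots n$ (hence zero inversions), biject $k$-clusters with compositions of $n-m$ into $k-1$ parts from $\{1,\dots,m-1\}$, and evaluate the resulting alternating sum via the geometric series $\frac{1-y}{1-y^m}$. The only cosmetic difference is that the paper packages the enumeration in the bivariate ordinary generating function $\frac{tx^m}{1-t(x+\dots+x^{m-1})}$ before setting $t=-1$, whereas you sum over $k$ directly; the computations are identical.
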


\begin{proof}
By the second part of Theorem~\ref{thm:cluster}, it is enough to show that 
\begin{equation}\label{eq:simplifyclusters}
1-z-C_{12\dots m}(q,-1,z)=\sum_{j\ge0} \frac{z^{jm}}{[jm]_q!} - \sum_{j\ge0} \frac{z^{jm+1}}{[jm+1]_q!}.
\end{equation}
Clusters with respect to $12\dots m$ have no inversions, since the underlying permutation is of the form $12\dots n$. Thus,
$$C_{12\dots m}(q,-1,z)=\sum_{n,k}c_{12\dots m}(0,k,n)(-1)^k\frac{z^n}{[n]_q!}.$$
To simplify the alternating sum in $k$, note that $k$-clusters of length $n$ with respect to $12\dots m$ are in bijection with sequences
$(i_1,i_2,\dots,i_k)$ with $i_1=1$, $i_k=n-m+1$, and $1\le i_{j+1}-i_j\le m-1$ for all $j$, which are in bijection with compositions of $n-m$ with $k-1$ parts, each of size at most $m-1$. It follows that the ordinary generating function of $k$ clusters can be expressed as
$$\sum_{n,k}c_{12\dots m}(0,k,n)t^k x^n=\frac{tx^m}{1-t(x+x^2+\dots+x^{m-1})},$$
and so 
$$\sum_{n,k}c_{12\dots m}(0,k,n)(-1)^k x^n=\frac{-x^m}{1+x+x^2+\dots+x^{m-1}}=\frac{-x^m(1-x)}{1-x^m}=-\sum_{j\ge1}(x^{jm}-x^{jm+1}).$$
We conclude that 
$$\sum_{n,k}c_{12\dots m}(0,k,n)(-1)^k\frac{z^n}{[n]_q!}=-\sum_{j\ge1}\left(\frac{z^{jm}}{[jm]_q!}-\frac{z^{jm+1}}{[jm+1]_q!}\right),$$
from where Equation~\eqref{eq:simplifyclusters} follows.
\end{proof}

A pattern $\sigma\in\S_m$ is said to be {\em non-overlapping} (also called {\em minimally overlapping} in the literature) if no permutation of length $n<2m-1$ has two occurrences of $\sigma$. For example, $132$ is non-overlapping.

\begin{theorem}[\cite{Elisurvey,Raw}]\label{thm:1b}
Let $\sigma=\sigma_1\dots\sigma_m$ be a non-overlapping pattern with $\sigma_1=1$ and let $b=\sigma_m$. Then 
$$
\tilde{F}_{\sigma}(q,u,z)=\left(1-z-\sum_{k\ge1}\prod_{j=1}^{k-1}\binom{j(m-1)+m-b}{m-b}_q\frac{q^{k\inv(\sigma)}(u-1)^k z^{k(m-1)+1}}{[k(m-1)+1]_q!}\right)^{-1}.
$$
\end{theorem}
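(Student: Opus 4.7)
The plan is to apply Theorem~\ref{thm:cluster}, so that it suffices to compute the $q$-exponential generating function $C_\sigma(q,u-1,z)$. Since $\sigma$ is non-overlapping, two consecutive marked occurrences in a cluster overlap at exactly one position, hence every $k$-cluster has length $n:=k(m-1)+1$. Writing $w_k(q):=\sum_\pi q^{\inv(\pi)}$ for the inversion-weighted count of $k$-clusters, the task reduces to showing
$$w_k(q) = q^{k\inv(\sigma)}\prod_{j=1}^{k-1}\binom{j(m-1)+m-b}{m-b}_q,$$
after which substitution into Theorem~\ref{thm:cluster} yields the claimed formula.

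I would prove this by induction on $k$ via a bijective decomposition of a $k$-cluster $\pi$ into a pair $(A,\pi')$, where $\pi':=\st(\pi_m\pi_{m+1}\cdots\pi_n)$ is a $(k-1)$-cluster of length $n-m+1$ and $A\subseteq\{b+1,\ldots,n\}$ has size $m-b$. The crucial structural observation is that $\pi_m=b$ in every such cluster. Indeed, since $\sigma_1=1$, the anchors $\pi_{i_1}<\pi_{i_2}<\cdots<\pi_{i_k}$ are the minima of their respective blocks, so $\pi_m=\pi_{i_2}$ is the minimum of $\{\pi_m,\ldots,\pi_n\}$ and no value in the strict suffix is below $\pi_m$; within the first block, $\pi_m$ has rank $b$, contributing exactly $b-1$ smaller values, whence $\pi_m=b$. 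With this fixed, the first block places $\{1,\ldots,b\}\cup A$ at positions $1,\ldots,m$ in the pattern $\sigma$, and the strict suffix contains $\{b+1,\ldots,n\}\setminus A$ at positions $m+1,\ldots,n$ in the order prescribed by $\pi'$ (the matching $\pi'_1=1$ is automatic since the first anchor of $\pi'$ is its global minimum).

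To track inversions I would split $\inv(\pi)$ into three pieces: inversions within the first block, within the strict suffix, and crossing between the two. The first equals $\inv(\sigma)$; the second equals $\inv(\pi')$, since adjoining the minimum $\pi_m=b$ to the suffix creates no new inversions; and the crossing inversions coincide with $\inv(w_A)$, where $w_A$ is the binary word on $\{b+1,\ldots,n\}$ recording whether each value belongs to $A$ (first block) or to $\{b+1,\ldots,n\}\setminus A$ (strict suffix). Summing over $A$ via~\eqref{eq:qbin} gives
$$\sum_A q^{\inv(w_A)}=\binom{n-b}{m-b}_q=\binom{(k-1)(m-1)+m-b}{m-b}_q,$$
yielding the recurrence $w_k(q)=q^{\inv(\sigma)}\binom{(k-1)(m-1)+m-b}{m-b}_q\,w_{k-1}(q)$, which iterates to the desired product.

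The main obstacle is establishing the identity $\pi_m=b$: it relies on the precise interplay of the three hypotheses $\sigma_1=1$, $\sigma_m=b$, and non-overlap, and it is what pins down the clean interface between the first block and the trailing $(k-1)$-cluster. Once this is in hand, the crossing inversions reduce to the binary-word generating function already used in Lemma~\ref{lem:product}, and the rest is a straightforward induction followed by substitution into Theorem~\ref{thm:cluster}.
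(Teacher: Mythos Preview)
Your proposal is correct and follows essentially the same approach as the paper: both reduce to Theorem~\ref{thm:cluster}, observe that $k$-clusters have length $k(m-1)+1$, decompose a $k$-cluster into its first block together with a trailing $(k-1)$-cluster, split $\inv(\pi)$ into the same three pieces, and obtain the identical recurrence via the $q$-binomial identity~\eqref{eq:qbin}. Your explicit justification that $\pi_m=b$ is a welcome elaboration of what the paper states more tersely as ``the positions of the entries $\{1,2,\dots,b\}$ are forced.''
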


\begin{proof}
Since $\sigma$ is non-overlapping, all $k$-clusters with respect to $\sigma$ have length $n=k(m-1)+1$. Let $$a_k=\sum_i c_\sigma(i,k,k(m-1)+1)q^i$$ be the polynomial that counts $k$-clusters according to their number of inversions. Clearly $a_1=q^{\inv(\sigma)}$, so let us assume that $k\ge2$.
In every $k$-cluster $\pi$, the positions of the entries $\{1,2,\dots,b\}$ are forced, since they must belong to the occurrence of $\sigma$ that starts in position $1$, namely $\pi_1\pi_2\dots\pi_m$. On the other hand, the remaining $m-b$ entries in this occurrence of $\sigma$ can take any of the remaining $k(m-1)+1-b$ values. Such a choice can be encoded by a word $w$ of length 
$k(m-1)+1-b$ where $w_i=0$ if the value $i+b$ appears in $\pi_1\pi_2\dots\pi_m$ and $w_i=1$ otherwise.
Once this word $w$ is chosen, $\pi$ is uniquely determined by the relative order of the entries $\pi_m\pi_{m+1}\dots\pi_{k(m-1)+1-b}$, which, up to standardization, form an arbitrary $(k-1)$-cluster $\tau$ with respect to $\sigma$. Additionally, the number of inversions of $\pi$ equals the number of inversions among the first $m$ entries (which is $\inv(\sigma)$), plus the number of inversions among the entries $\pi_m\pi_{m+1}\dots\pi_{k(m-1)+1-b}$ (which is $\inv(\tau)$), plus the number of inversions between one of the first $m$ entries and one of the remaining entries (which is $\inv(w)$).
It follows that  $$a_k=q^{\inv(\sigma)} \left(\sum_\tau q^{\inv(\tau)} \right) \left(\sum_w q^{\inv(w)}\right),$$
where $\tau$ ranges over all $(k-1)$-clusters with respect to $\sigma$ and $w$ ranges over all $\{0,1\}$-valued words of length $k(m-1)+1-b$ having $m-b$ zeros. Using Equation~\eqref{eq:qbin}, we have
$$a_k=q^{\inv(\sigma)} a_{k-1} \binom{k(m-1)+1-b}{m-b}_q.$$
By induction on $k$, we get
$$a_k=q^{k \inv(\sigma)} \prod_{j=2}^k \binom{j(m-1)+1-b}{m-b}_q=q^{k \inv(\sigma)} \prod_{j=1}^{k-1} \binom{j(m-1)+m-b}{m-b}_q.$$

By Equation~\eqref{eq:defC} and using the above argument,
\begin{multline*}
C_{\sigma}(q,u-1,z)=\sum_{n,k,i}c_\sigma(i,k,n)q^i(u-1)^k\frac{z^n}{[n]_q!}=\sum_{k,i}c_\sigma(i,k,k(m-1)+1)q^i(u-1)^k\frac{z^{k(m-1)+1}}{[k(m-1)+1]_q!}\\
=\sum_{k}a_k(u-1)^k\frac{z^{k(m-1)+1}}{[k(m-1)+1]_q!}=\sum_{k}\prod_{j=1}^{k-1}\binom{j(m-1)+m-b}{m-b}_q\frac{q^{k\inv(\sigma)}(u-1)^k z^{k(m-1)+1}}{[k(m-1)+1]_q!}.$$
\end{multline*}
The result follows now from Theorem~\ref{thm:cluster}.
\end{proof}

\section{Growth rates}\label{section:growth}

In analogy to the fact that $\lim_{n\to\infty} \left(\frac{|\S_n(\sigma)|}{n!}\right)^{1/n}$ exists for every $\sigma$, we can prove the following result about the Mallows distribution.
Recall from Section \ref{sec:intro} that we write $P_n(\sigma,q)$ for the probability that a random permutation of length $n$ from the Mallows distribution with parameter $q$ avoids~$\sigma$.

\begin{theorem}\label{thm:rho}
For every $q>0$ and every pattern $\sigma$, the limit
\begin{equation}\label{eq:rate}
\rho(\sigma,q):=\lim_{n\to\infty} P_n(\sigma,q)^{1/n}\end{equation}
exists.
\end{theorem}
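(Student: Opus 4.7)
The plan is to establish that $P_n(\sigma,q)$ is submultiplicative in $n$, and then apply Fekete's lemma in its logarithmic form to conclude that the $n$-th root converges.

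The key step is a submultiplicativity inequality. I would apply Lemma~\ref{lem:product} with $A = \S_n(\sigma)$ and $B = \S_m(\sigma)$ to obtain
$$P_n(\sigma,q)\cdot P_m(\sigma,q) = \sum_{\pi\in C} \frac{q^{\inv(\pi)}}{[n+m]_q!},$$
where $C$ is the set of $\pi\in\S_{n+m}$ such that both $\st(\pi_1\dots\pi_n)$ and $\st(\pi_{n+1}\dots\pi_{n+m})$ avoid $\sigma$. The crucial observation is that any $\pi\in\S_{n+m}$ that avoids $\sigma$ consecutively necessarily lies in $C$: any consecutive occurrence of $\sigma$ inside the standardization of an initial or final block of $\pi$ would yield a consecutive occurrence of $\sigma$ in $\pi$ itself. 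Hence $\S_{n+m}(\sigma)\subseteq C$, and since all summands are non-negative,
$$P_{n+m}(\sigma,q) \;\le\; P_n(\sigma,q)\cdot P_m(\sigma,q).$$

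Now I would pass to logarithms. Setting $a_n = \log P_n(\sigma,q)\in [-\infty,0]$ (with the convention $\log 0 = -\infty$), the inequality above becomes the subadditivity condition $a_{n+m}\le a_n + a_m$. Fekete's lemma then guarantees that $\lim_{n\to\infty} a_n/n$ exists in $[-\infty,0]$ and equals $\inf_{n} a_n/n$. Exponentiating (with $e^{-\infty}=0$), we conclude that
$$\rho(\sigma,q) \;=\; \lim_{n\to\infty} P_n(\sigma,q)^{1/n} \;=\; \exp\!\left(\lim_{n\to\infty} \tfrac{a_n}{n}\right)$$
exists in $[0,1]$.

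There is no substantive obstacle: the whole argument rests on the submultiplicativity inequality, and that is essentially a one-line consequence of Lemma~\ref{lem:product} combined with the trivial set inclusion $\S_{n+m}(\sigma)\subseteq C$. One minor point worth flagging in the write-up is the degenerate case in which $P_n(\sigma,q)=0$ for some $n$ (e.g.\ when $\sigma$ has length $1$); submultiplicativity forces $P_k(\sigma,q)=0$ for all $k\ge n$, and then the limit exists and equals $0$ trivially, so the conclusion still holds.
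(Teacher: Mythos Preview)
Your proposal is correct and follows essentially the same approach as the paper's combinatorial proof: establish submultiplicativity $P_{m+n}(\sigma,q)\le P_m(\sigma,q)P_n(\sigma,q)$ via Lemma~\ref{lem:product} and the inclusion $\S_{m+n}(\sigma)\subseteq C$, then invoke Fekete's lemma. The paper also gives an alternative probabilistic proof of the same submultiplicativity using the weak dissociation and consecutive homogeneity properties of the Mallows distribution, but your argument matches the combinatorial one almost verbatim.
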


\begin{proof}[Combinatorial proof]
First we show that for every $m,n$, 
\begin{equation}\label{eq:P} P_{m+n}(\sigma,q) \le P_m(\sigma,q) P_n(\sigma,q).\end{equation}
This is a consequence of Lemma~\ref{lem:product}. Indeed, 
taking $A=\S_m(\sigma)$, $B=\S_n(\sigma)$, and
$$C=\{\pi\in\S_{m+n}:\st(\pi_1\dots\pi_m)\in A, \st(\pi_{m+1}\dots\pi_{m+n})\in B\},$$
 we have that
$\S_{m+n}(\sigma)\subseteq C$, since a permutation that avoids $\sigma$ must avoid $\sigma$ in the first $m$ and in the last $n$ entries.
 Thus,
$$P_{m+n}(\sigma,q)=\sum_{\pi\in \S_{m+n}(\sigma)} \frac{q^{\inv(\pi)}}{[m+n]_q!}\le \sum_{\pi\in C} \frac{q^{\inv(\pi)}}{[m+n]_q!}=\left(\sum_{\pi\in A} \frac{q^{\inv(\pi)}}{[m]_q!}\right) \left(\sum_{\pi\in B} \frac{q^{\inv(\pi)}}{[n]_q!}\right)=P_m(\sigma,q) P_n(\sigma,q).$$
Applying Fekete's lemma~\cite[Lemma 1.6]{vLW}, it follows that $\lim_{n\to\infty} P_n(\sigma,q)^{1/n}$ exists.
\end{proof}

\begin{proof}[Probabilistic proof]
Let $\pi=\pi_1\dots\pi_{n+m}$ be a random permutation from the Mallows($q$) distribution on $\S_n$ and let $\sigma\in\S_k$ for $n+m\geq k\geq1$.
For $j=1,\ldots,n+m-k+1$, let $A_j$ be the event that $\st(\pi_j\pi_{j+1}\dots\pi_{j+k-1})\neq\sigma$, that is, the segment of $\pi$ starting at position $j$ avoids $\sigma$.
Then we have
\[P_{n+m}(\sigma,q)= \mathbb{P}\left(\bigcap_{j=1}^{n+m-k+1}A_j\right)\leq\mathbb{P}\left(\bigcap_{i=1}^{n-k+1}A_i\cap\bigcap_{j=n+1}^{n+m-k+1}A_j\right)\]
by removing those events $A_{n-k+2},\ldots,A_n$ that overlap with the first $n$ and last $m$ locations of $\pi$.

By the weak dissociation property of the Mallows distribution, $|i-j|\geq k$ implies that $A_i$ and $A_j$ are independent, that is, $\mathbb{P}(A_i\cap A_j)=\mathbb{P}(A_i)\mathbb{P}(A_j)$.
Thus,
\[
\mathbb{P}\left(\bigcap_{i=1}^{n-k+1}A_i\cap\bigcap_{j=n+1}^{n+m-k+1}A_j\right)=\mathbb{P}\left(\bigcap_{i=1}^{n-k+1}A_i\right)\mathbb{P}\left(\bigcap_{j=n+1}^{n+m-k+1}A_j\right)=P_n(\sigma,q)P_m(\sigma,q),\]
where the last equality is a consequence of the consecutive heterogeneity property of the Mallows distribution. 
This proves~\eqref{eq:P}. To deduce that $\lim_{n\to\infty} P_n(\sigma,q)^{1/n}$ exists we now apply Fekete's lemma as before.
\end{proof}

\begin{remark}
The alternate proofs of Theorem \ref{thm:rho} illustrate the interplay between combinatorial and probabilistic techniques on display throughout the paper.
\end{remark}

The quantity $\rho(\sigma,q)$ defined in Theorem~\ref{thm:rho} is called the {\em growth rate} of $\sigma$, and it is a function of $q$. From the definition it follows that $0\le \rho(\sigma,q)\le 1$ for all $\sigma$ and $q$.
It is clear from Equation~\eqref{eq:reversal} that $\rho(\sigma^r,q)=\rho(\sigma,1/q)$. To draw plots of $\rho(\sigma,q)$ for $0\le q<\infty$, it will be convenient to use the change of variables $x=\frac{q-1}{q+1}$, or equivalently $q=\frac{1+x}{1-x}$. With this transformation, $x$ ranges between $-1$ and $1$, and the values $-1,0,1$ for $x$ correspond to $0,1,\infty$ for $q$, respectively.
Additionally, the symmetry between $q$ and $1/q$ resulting from reversing the pattern corresponds to the symmetry between $x$ and $-x$, and thus the graph for $\sigma^r$ is obtained by reflecting the graph for $\sigma$ with respect to the vertical axis.

Using a well-known fact of singularity analysis~\cite[Theorem IV.7]{FS}, $\rho(\sigma,q)^{-1}$ equals the radius of convergence of $F_\sigma(q,z)$ as a function of a complex variable $z$. Additionally, if this radius is finite, then $F_\sigma(q,z)$ has a real singularity at $z=\rho(\sigma,q)^{-1}$, by Pringsheim's Theorem~\cite[Theorem IV.6]{FS} and the fact that this generating function has non-negative coefficients.

By Theorem~\ref{thm:cluster}, defining 
\begin{equation}\label{eq:omega-sigma}\omega_\sigma(q,z)=1-z-C_\sigma(q,-1,z)\end{equation}
allows us to write $F_\sigma(q,z)=\frac{1}{\omega_\sigma(q,z)}$. In particular, the smallest singularity of $F_\sigma(q,z)$, when it exists, is either a zero or a singularity of $\omega_\sigma(q,z)$. In some cases there is no such singularity, that is, $F_\sigma(q,z)$ has infinite radius of convergence.

\section{Monotone patterns}\label{section:monotone}

In this section we study the growth rate $\rho(\sigma,q)$ when $\sigma=12\dots m$ for any $m\ge3$.  (When $m=2$, there is exactly 1 permutation of $n$, namely $n(n-1)\dots1$, that avoids $12$, yielding $\rho(12,q)\equiv0$.) Note that $\rho(m\dots 21,q)=\rho(12\dots m,1/q)$, so our results apply to the monotone decreasing pattern as well.

By the arguments in Section~\ref{section:growth},  $\rho(12\dots m,q)^{-1}$ equals the smallest positive singularity of the generating function $F_{12\dots m}(q,z)=1/\omega_{12\dots m}(q,z)$ given in Theorem~\ref{thm:monotone}, for $\omega_{12\dots m}(q,z)$ as defined in \eqref{eq:omega-sigma}.
Let $a_n=\frac{z^n}{[n]_q!}$, so that 
\begin{equation}\label{eq:omega_mon}\omega_{12\dots m}(q,z)=\sum_{j\ge0} \left(\frac{z^{jm}}{[jm]_q!} - \frac{z^{jm+1}}{[jm+1]_q!}\right)=\sum_{j\ge0} (a_{jm}-a_{jm+1}).
\end{equation} 
Note that $\frac{a_{n}}{a_{n-1}}=\frac{z}{[n]_q}$. We consider two cases depending on whether $q$ is larger or smaller than $1$.

\medskip

\noindent {\bf Case $q\ge1$.} In this case, $[n]_q=1+q+\dots+q^{n-1}$ goes to infinity as $n\to\infty$. Thus, for any fixed $z$, we have $\frac{a_{n}}{a_{n-1}}<1$ for $n$ large enough. It follows that the series~\eqref{eq:omega_mon}
 converges, and thus $\rho(12\dots m,q)^{-1}$ is the smallest positive zero of $\omega_{12\dots m}(q,z)$, which can be easily approximated by truncating the sum, since the factorial in the denominator makes the terms quickly go to zero.

Figure~\ref{fig:monotone_x01} shows the plots of $\rho(12\dots m,q)$ for $q\in[1,\infty)$ and $m\in\{3,4,5\}$, with the horizontal axis rescaled by
$q=\frac{1+x}{1-x}$, so that $x\in[0,1)$. The values of $\rho(12\dots m,q)$ have been obtained by truncating the series~\eqref{eq:omega_mon} at $j=10$ and finding its zero numerically.

\begin{figure}[h]
\centering
\includegraphics[height=6cm]{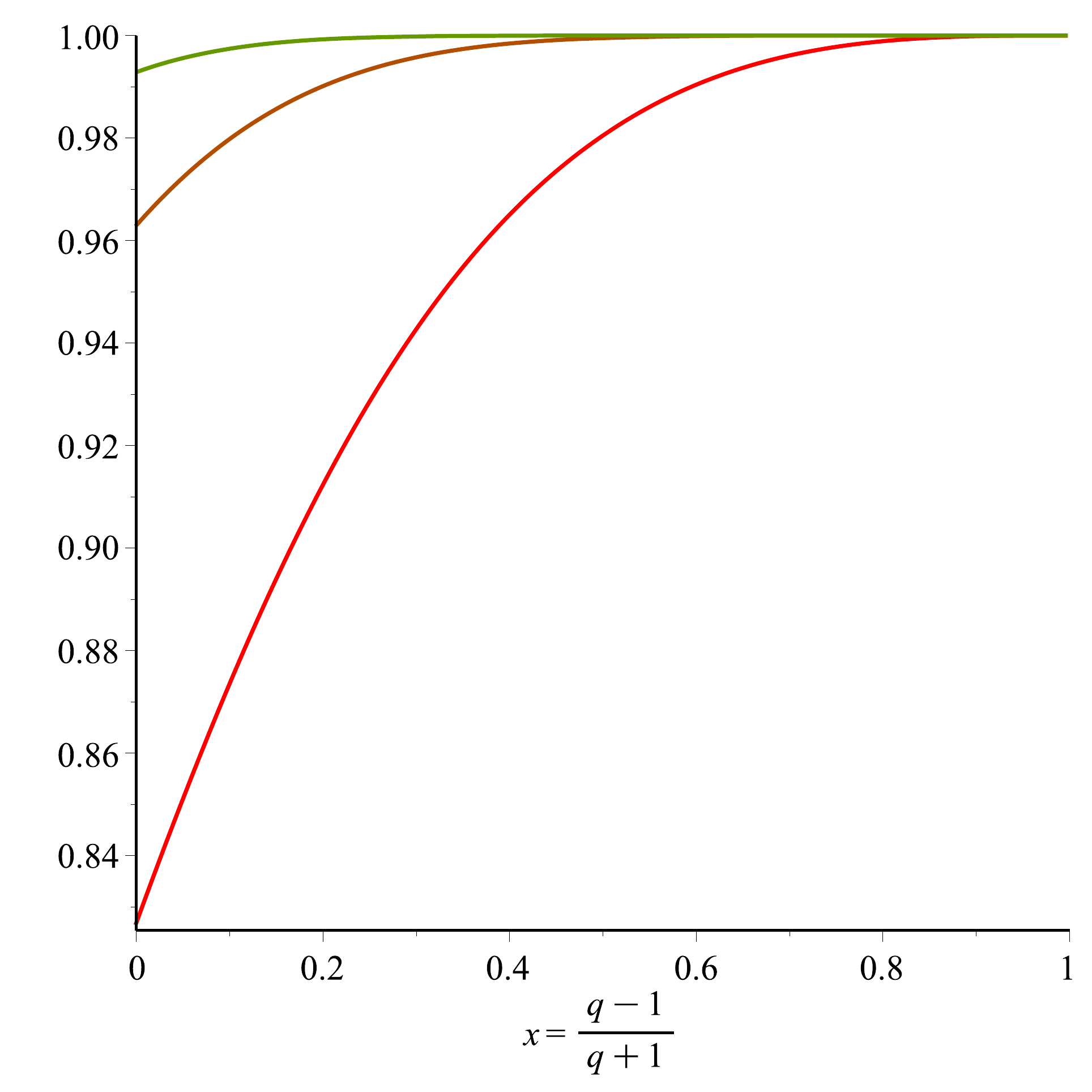}
\caption{Plots of $\rho(12\dots m,q)$ with $q=\frac{1+x}{1-x}>1$ for $m=3$ {\color[rgb]{1,0,0} (red)}, $m=4$ {\color[rgb]{.7,.3,0} (brown)} and $m=5$ {\color[rgb]{.4,.6,0} (green)}.}
\label{fig:monotone_x01}
\end{figure}

If we now let $m$ go to infinity, and we write the smallest positive zero of $\omega_{12\dots m}(q,z)$ as $z_0=1+\epsilon$, then we have that
$\epsilon= \frac{1}{[m]_q!}-\frac{1}{[m+1]_q!}+ O\left(\frac{1}{[m+2]_q!}\right)$, yielding
$$\rho(12\dots m,q)= 1-\frac{1}{[m]_q!}+\frac{1}{[m+1]_q!}+O\left(\frac{1}{[m+2]_q!}\right).$$

\medskip

\noindent {\bf Case $q<1$.} In this case,
$$\frac{a_{n}}{a_{n-1}}=\frac{z(1-q)}{1-q^{n}}\rightarrow z(1-q)$$ 
as $n\to\infty$. Thus, the radius of convergence of $\omega_{12\dots m}(q,z)$ is $\frac{1}{1-q}$. 

For $q$ above a certain threshold, the function $\omega_{12\dots m}(q,z)$ has a zero $z_0$ with $z_0<\frac{1}{1-q}$. In this case, the smallest positive such zero can be estimated by truncating the series as before, giving the growth rate $\rho(12\dots m,q)=z_0^{-1}$. However, for small values of $q$, the function $\omega_{12\dots m}(q,z)$ does not have a positive zero smaller than $\frac{1}{1-q}$, so we use a different method to approximate its smallest zero~$z_0$.

We first approximate the tail of $\omega_{12\dots m}(q,z)$ as follows. First note that
$$[n]_q!=\frac{\prod_{i=1}^n(1-q^i)}{(1-q)^n}.$$
As $n\to\infty$, the numerator approaches a constant, whose reciprocal we denote by 
\begin{equation}\label{eq:cq}
c_q:=\frac{1}{\prod_{i=1}^\infty(1-q^i)}.
\end{equation}
Using the approximation 
\begin{equation}\label{eq:approximation} 
a_n=\frac{z^n}{[n]_q!}\approx c_q(z(1-q))^n, \end{equation}
we get, for large $K$,
$$\sum_{j\ge K} (a_{jm}-a_{jm+1}) \approx \sum_{j\ge K} c_q(z(1-q))^{jm}(1-z(1-q))=c_q(1-z(1-q))\frac{(z(1-q))^{Km}}{1-(z(1-q))^m},$$
and so 
\begin{equation}\label{eq:omega_trunc_tail}
\omega_{12\dots m}(q,z)\approx \sum_{j=0}^{K-1} (a_{jm}-a_{jm+1})+c_q(1-z(1-q))\frac{(z(1-q))^{Km}}{1-(z(1-q))^m}.
\end{equation}
We can find the smallest positive zero of the right-hand side of~\eqref{eq:omega_trunc_tail} numerically, obtaining an approximation of the smallest zero $z_0$ of $\omega_{12\dots m}(q,z)$.

Using this method for $m=3$, we have computed an approximation of $\rho(123,q)$ as the reciprocal of the smallest positive zero of the right-hand side of~\eqref{eq:omega_trunc_tail} for $K=40$. The resulting plot appears in Figure~\ref{fig:123_x_neg_tail}. We note that the smallest zero satisfies $z_0<\frac{1}{1-q}$ roughly when $0.4124<q<1$ (equivalently,  $-0.4160<x<0$ with the rescaling $q=\frac{1+x}{1-x}$). However, for smaller values of $q$, our method still yields an approximation of $\rho(123,q)$.

\begin{figure}[h]
\centering
\includegraphics[height=6cm]{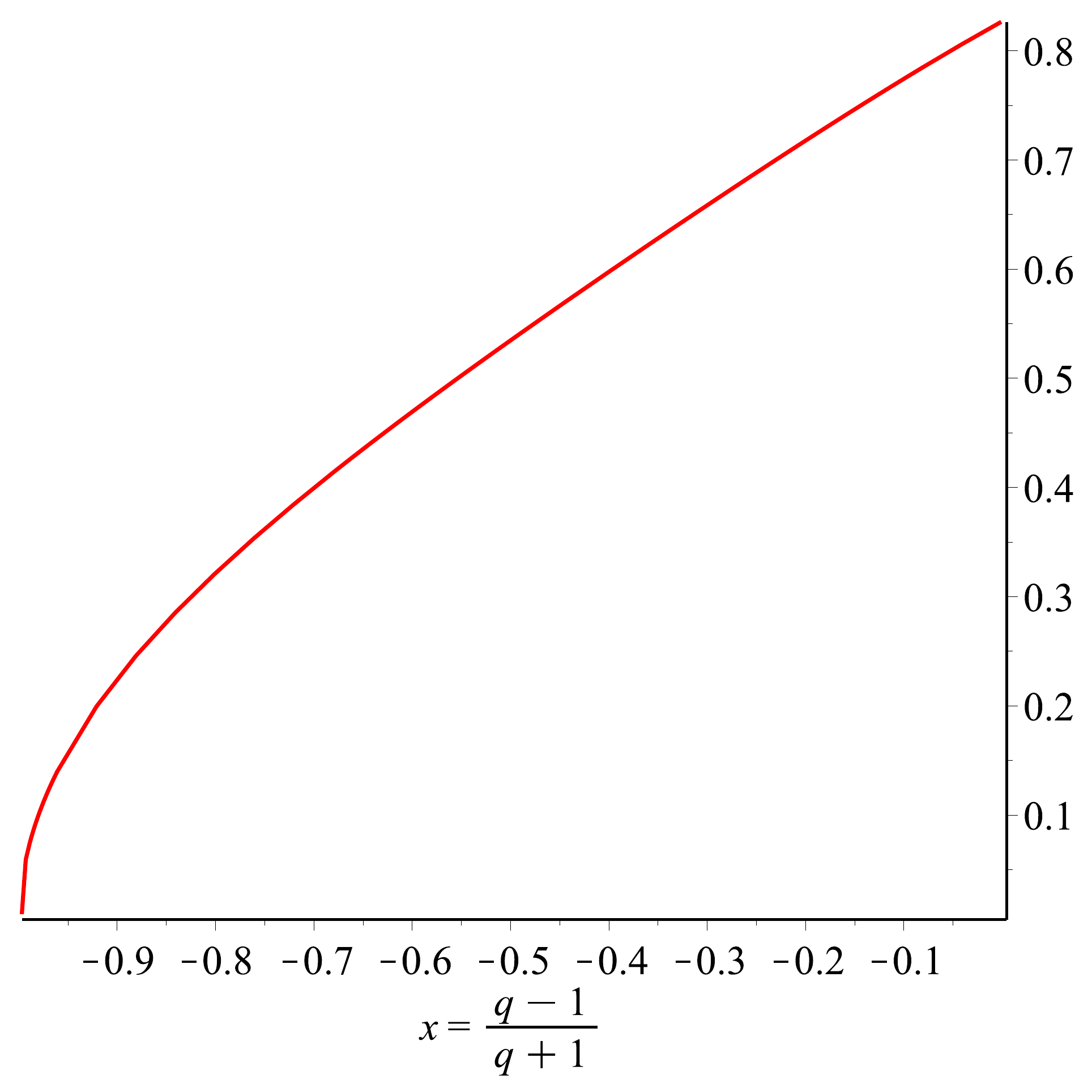}
\caption{Plot of $\rho(123,q)$ for $0<q<1$ obtained by computing $z_0^{-1}$ where $z_0$ is the smallest positive zero of $\omega_{123}(q,z)$, approximated by~\eqref{eq:omega_trunc_tail} with $K=40$. The horizontal axis has been rescaled by $q=\frac{1+x}{1-x}$.}
\label{fig:123_x_neg_tail}
\end{figure}

In the limit as $q\to0$, the Mallows distribution assigns probability one to monotone increasing permutations, and so $\lim_{q\to0} \rho(12\dots m,q)=0$, which agrees with Figure~\ref{fig:123_x_neg_tail}.

For comparison purposes, Figure~\ref{fig:123_x_all_tail} shows the plots of $P_{n}(123,q)^{1/n}$ for $n=30$ and $n=50$. As $n$ goes to infinity, these plots approach $\rho(123,q)$. In general, $P_{n}(12\dots m,q)^{1/n}$ can be obtained by computing the coefficient of $z^n$ in the expression for $F_{12\dots m}(q,z)$ in Theorem~\ref{thm:monotone}.

\begin{figure}[h]
\centering
\includegraphics[height=6cm]{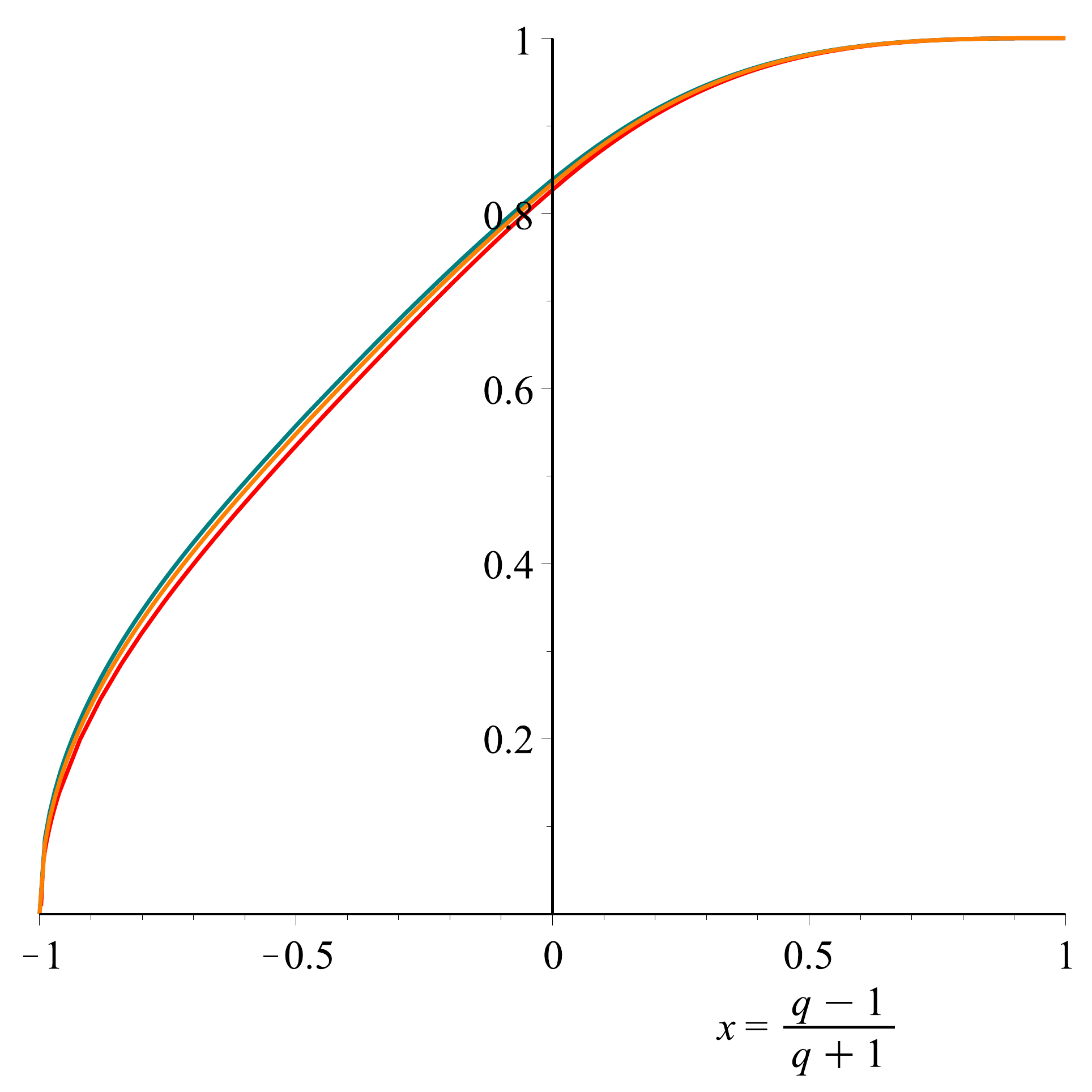}
\caption{Plots of $P_{30}(123,q)^{1/30}$ {\color[rgb]{0,.5,.5} (turquoise)} and $P_{50}(123,q)^{1/50}$ {\color[rgb]{1,0.5,0} (orange)} as functions of $x$, overlaid with $\rho(123,q)$ {\color[rgb]{1,0,0} (red)} found using our method.}
\label{fig:123_x_all_tail}
\end{figure}

Let us finish this section by analyzing the behavior of the growth rate $\rho(12\dots m,q)$ for fixed $q<1$ as $m$ goes to infinity. For large $m$, we can use the approximation
$$\omega_{12\dots m}(q,z)\approx 1-z+c_q(1-z(1-q))\frac{(z(1-q))^{m}}{1-(z(1-q))^m}.$$
Writing the smallest positive zero of $\omega_{12\dots m}(q,z)$ as $z_0=1+\epsilon$, then we have that
$$\epsilon\approx c_q(1-(1+\epsilon)(1-q))\frac{(1+\epsilon)^m(1-q)^{m}}{1-(1+\epsilon)^m(1-q)^{m}}.$$
The possibility that $(1+\epsilon)(1-q)\ge1$ can be easily ruled out because it gives a contradition with the fact that $\epsilon\to0$ as $m\to\infty$. Thus, using that $(1+\epsilon)(1-q)<1$, we can write 
$$\epsilon\approx c_q(1-(1+\epsilon)(1-q))(1+\epsilon)^m(1-q)^{m}=c_q(q-\epsilon(1-q))(1+\epsilon)^m(1-q)^{m}$$
from where we get
$$\epsilon= c_q q(1-q)^{m}+O(m(1-q)^{2m}).$$

\section{Non-overlapping patterns starting with a $1$}\label{section:132}

In this section we study the growth rates $\rho(\sigma,q)$ when $\sigma=\sigma_1\cdots\sigma_m$ is a non-ovelapping pattern with $\sigma_1=1$. 
The generating function $F_{\sigma}(q,z)=1/\omega_{\sigma}(q,z)$ for such a pattern is given in Theorem~\ref{thm:1b}, and $\rho(\sigma,q)^{-1}$ equals the smallest positive singularity of this generating function.

\subsection{The pattern $132$}
\label{pattern:132}

Let $$h_k=\frac{q^{k} z^{2k+1}}{[2k+1]_q \prod_{j=1}^k [2j]_q},$$ so that, by Theorem~\ref{thm:1b},
\begin{equation}\label{eq:omega_1b}
\omega_{132}(q,z)=1-z-\sum_{k\ge1}\left(\prod_{j=1}^{k-1} [2j+1]_q\right) \frac{q^{k}(-1)^k z^{2k+1}}{[2k+1]_q!}
=1-z-\sum_{k\ge1} (-1)^k h_k.
\end{equation}
We consider the quotient
\begin{equation}\label{eq:bkratio}
\frac{h_{k}}{h_{k-1}}=\frac{qz^2[2k-1]_q}{[2k+1]_q[2k]_q},
\end{equation}
and distinguish two cases depending on whether $q$ is larger or smaller than $1$.

\medskip

\noindent {\bf Case $q\ge1$.}  In this case, $\frac{h_{k}}{h_{k-1}}\to 0$ as $k\to\infty$, and so the series $\omega_{132}(q,z)$ is convergent. It follows that $\rho(132,q)^{-1}$ is the smallest positive zero of $\omega_{132}(q,z)$, and it can be approximated by truncating the sum~\eqref{eq:omega_1b}.

Figure~\ref{fig:132_x} shows a plot of $\rho(132,q)$ with the rescaling $q=\frac{1+x}{1-x}$. For $q\ge1$, $\rho(132,q)$ has been found by taking the reciprocal of the smallest zero of a truncation of $\omega_{132}$.

\begin{figure}[h]
\centering
\includegraphics[height=6cm]{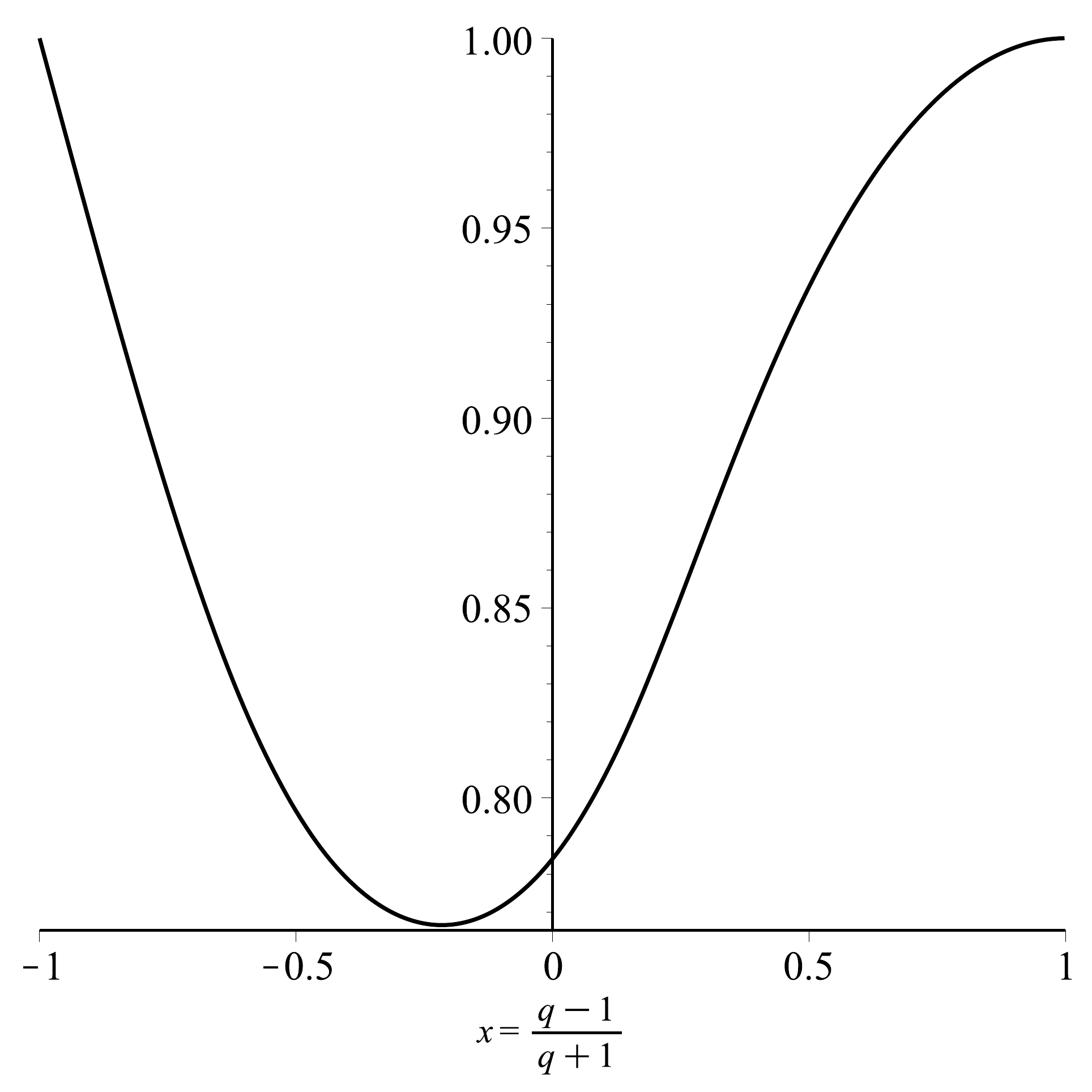}
\caption{Plot of $\rho(132,q)$ as a function of $x$, obtained by taking the reciprocal of the smallest positive zero of the series for $\omega_{132}$ truncated at $j=20$.}
\label{fig:132_x}
\end{figure}

If we write $z_0=1+\epsilon$, we have that $\omega_{132}(z_0)=0$ is equivalent to
\begin{equation}\label{eq:eps132}
\epsilon=\sum_{k\ge1}\frac{(-1)^{k-1} q^{k} (1+\eps)^{2k+1}}{[2k+1]_q \prod_{j=1}^k [2j]_q}.
\end{equation}

As $q\to\infty$, the dominant term of $[n]_q$ is $q^{n-1}$, and so $\frac{q^{k}}{[2k+1]_q \prod_{j=1}^k [2j]_q}=O(q^{-k(k+1)})$. It follows that
$$\eps=\frac{q(1+\epsilon)^3}{[2]_q[3]_q}+O(q^{-6})=\frac{q}{[2]_q[3]_q}+\frac{3q^2}{[2]_q^2[3]_q^2}+O(q^{-6})=q^{-2}-2q^{-3}+O(q^{-4})$$
and
$$\rho(132,q)=(1+\eps)^{-1}=1-q^{-2}+2q^{-3}+O(q^{-4}).$$
Equivalently, as $q\to0$,
$$\rho(231,q)=1-q^2+2q^{3}+O(q^{4}).$$

\medskip

\noindent {\bf Case $q<1$. } In this case,
$$\frac{h_{k}}{h_{k-1}}\rightarrow q(1-q)z^2$$ 
as $k\to\infty$. Thus, for $z<\frac{1}{\sqrt{q(1-q)}}$, the series $\omega_{132}(q,z)$ is convergent. We will show that, for every $q<1$, this series has a zero with $z_0<\frac{1}{\sqrt{q(1-q)}}$; therefore, this zero equals $\rho(132,q)^{-1}$ and can be approximated by truncating the series as before. 

In fact, we will show that $\omega_{132}(q,z)$ always has a zero with $1<z_0<\sqrt{3}$, which implies the above statement because
$\sqrt{3}<2\le\frac{1}{\sqrt{q(1-q)}}$ for $q\in(0,1)$. First, note that for $0<z<\sqrt{3}$, Equation~\eqref{eq:bkratio} gives
$$\frac{h_{k}}{h_{k-1}}\le \frac{3q[2k-1]_q}{[2k+1]_q[2k]_q}<\frac{3q}{[2k+1]_q}\le \frac{3q}{[3]_q}\le1$$
for every $k\ge1$ and $q\in[0,1]$, where in the last inequality we have used the fact that $3\le \frac{1+q+q^2}{q}$ for all $q\in[0,1]$,
which follows from the fact that $0\le (1-q)^2$.  By truncating an alternating series whose terms decrease in absolute value, we have that, for any odd $N$,
$$\omega_{132}(z)<1-z-\sum_{k=1}^{N}\frac{(-1)^k q^{k} z^{2k+1}}{[2k+1]_q \prod_{j=1}^k [2j]_q}=:p_N(z)$$
for $0<z<\sqrt{3}$. Since $\omega_{132}(0)=p_N(0)=1$, it follows that $z_0<z_1$, where $z_1$ is the smallest positive zero of $p_N(z)$.  Finally, by truncating at $N=5$ (any larger odd $N$ works too), we obtain that $z_1<\sqrt{3}$, as can be seen by checking that $p_5(\sqrt{3})<0$ for all $q\in(0,1)$.

Thus, we have proved that $0<z_0<z_1<\sqrt{3}$ as desired.

\medskip

The above argument shows that for $q\ge1$, the value of $\rho(132,q)$ can be approximated by taking the reciprocal of the smallest zero of a truncation of $\omega_{132}$, as we have done in Figure~\ref{fig:132_x}.
An interesting feature of this graph is that the minimum of $\rho(132,q)$ is attained for some value of $q$ strictly between $0$ and $\infty$. A numerical approximation of this value is $q\approx 0.6447045$, which gives a growth rate of $\rho(132,q)\approx0.7665452$.  
As $q\to0$, Equation~\eqref{eq:eps132} gives the approximation
$$\eps=
\frac{q(1+\epsilon)^3}{[2]_q[3]_q} - \frac{q^2 (1+\eps)^5}{[2]_q[4]_q[5]_q} +O(q^3)=
\frac{q}{[2]_q[3]_q}+\frac{3q^2}{[2]_q^2[3]_q^2}-\frac{q^2}{[2]_q[4]_q[5]_q}+O(q^3)=q+O(q^3),$$
and
$$\rho(132,q)=(1+\eps)^{-1}=1-q+q^{2}+O(q^3).$$
Note that some care must be taken when expanding out expressions containing $q$-analogues in powers of $q$.

\subsection{Generalizations}
\label{section:generalizations}
The arguments in the previous subsection generalize to non-overlapping patterns that start with a $1$. Letting $\sigma\in\S_m$ be a non-overlapping pattern with $\sigma_1=1$ and $\sigma_m=b$, Theorem~\ref{thm:1b} gives 
$\omega_\sigma(q,z)=1-z-\sum_{k\ge1} (-1)^k h^\sigma_k$, where we define
\begin{equation}\label{eq:h}
h^\sigma_k:=\prod_{j=1}^{k-1}\binom{j(m-1)+m-b}{m-b}_q\frac{q^{k\inv(\sigma)} z^{(m-1)k+1}}{[k(m-1)+1]_q!}.
\end{equation}
The quotient of consecutive terms can be simplified as
\begin{align*}\label{eq:ckratio}
\frac{h^\sigma_{k+1}}{h^\sigma_{k}}&=\binom{k(m-1)+m-b}{m-b}_q \frac{q^{\inv(\sigma)}z^{m-1}}{[k(m-1)+m]_q\cdots [k(m-1)+2]_q}\\
&=\frac{[k(m-1)+m-b]_q\cdots [k(m-1)+2]_q[k(m-1)+1]_q \, q^{\inv(\sigma)}z^{m-1}}{[m-b]_q!\, [k(m-1)+m]_q\cdots [k(m-1)+3]_q[k(m-1)+2]_q}\\
&=\frac{[k(m-1)+1]_q\, q^{\inv(\sigma)}z^{m-1}}{[m-b]_q! \prod_{i=0}^{b-1}[k(m-1)+m-i]_q}.
\end{align*}

\medskip

\noindent {\bf Case $q\ge1$.}  In this case, noting that $b\ge2$ we see that $\frac{h^\sigma_{k+1}}{h^\sigma_{k}}\to 0$ as $k\to\infty$. It follows that the series $\omega_{\sigma}(q,z)$ is convergent, and that $\rho(\sigma,q)^{-1}$ is the smallest positive zero  of $\omega_{\sigma}(q,z)$, which can be approximated by truncating the sum, analogously to what we did for the pattern $132$. In Figure~\ref{fig:1b_m4}, the parts of the graphs of $\rho(\sigma,q)$ for $q\ge1$ (equivalently, $x\ge0$) have been computed in this way truncating at $k=15$.

\medskip
\noindent {\bf Case $q<1$. } In this case,
$$\frac{h^\sigma_{k+1}}{h^\sigma_{k}}\rightarrow \frac{(1-q)^{b-1} q^{\inv(\sigma)} z^{m-1}}{[m-b]_q!}$$ 
as $k\to\infty$. Thus, for 
\begin{equation}\label{eq:bound} z<\left(\frac{[m-b]_q!}{(1-q)^{b-1} q^{\inv(\sigma)}}\right)^{\frac{1}{m-1}},
\end{equation} the series $\omega_{\sigma}(q,z)$ is convergent. If this series has a positive zero $z=z_0$ satisfying \eqref{eq:bound}, then this zero equals $\rho(\sigma,q)^{-1}$ and can be approximated by truncating the series as we did for the pattern $132$. 

However, even without the assumption that the smallest positive zero of $\omega_{\sigma}(q,z)$ satisfies~\eqref{eq:bound}, we can use the following method to estimate this zero. First, we write~\eqref{eq:h} as
$$h^\sigma_k=\frac{q^{k\inv(\sigma)} z^{(m-1)k+1}}{[k(m-1)+1]_q!\left([m-b]_q!\right)^{k-1}}\prod_{j=1}^{k-1}\prod_{i=1}^{m-b}[j(m-1)+i]_q.$$
As in the case of monotone patterns, we use the approximation $$[k(m-1)+1]_q!\approx \frac{1}{c_q (1-q)^{k(m-1)+1}}$$ as $k\to\infty$, with $c_q$ as defined in Equation~\eqref{eq:cq}. Similarly, we can approximate 
$$\prod_{j=1}^{k-1}\prod_{i=1}^{m-b}[j(m-1)+i]_q\approx\frac{1}{d_{q,m,b}(1-q)^{(k-1)(m-b)}}$$
as $k\to\infty$, with $d_{q,m,b}$ being the constant defined by $$d_{q,m,b}:=\frac{1}{\prod_{j=1}^{\infty}\prod_{i=1}^{m-b}(1-q^{j(m-1)+i})}.$$
Thus, for large $k$, we can write
$$h^\sigma_k\approx \frac{c_q (1-q)^{m+1-b} [m-b]_q!\, z}{d_{q,m,b}}\left(\frac{q^{\inv(\sigma)}(1-q)^{b-1}z^{m-1}}{[m-b]_q!} \right)^k.$$
And for $K$ large enough,
\begin{equation}\label{eq:omega1b_trunc_tail}\omega_\sigma(q,z)\approx 1-z-\sum_{k=1}^{K-1} (-1)^k h^\sigma_k-\frac{c_q (1-q)^{m+1-b} [m-b]_q!\, z}{d_{q,m,b}}\frac{\left(-\frac{q^{\inv(\sigma)}(1-q)^{b-1}z^{m-1}}{[m-b]_q!} \right)^K}{1+\frac{q^{\inv(\sigma)}(1-q)^{b-1}z^{m-1}}{[m-b]_q!}}.
\end{equation}

In Figure~\ref{fig:1b_m4}, $\rho(\sigma,q)$ for $q<1$ (equivalently, $x<0$) has been approximated for the three  non-overlapping patterns of length 4 starting with a 1 as the reciprocal of the smallest positive zero of~\eqref{eq:omega1b_trunc_tail} with $K=15$. 

\begin{figure}[h]
\centering
\includegraphics[height=6cm]{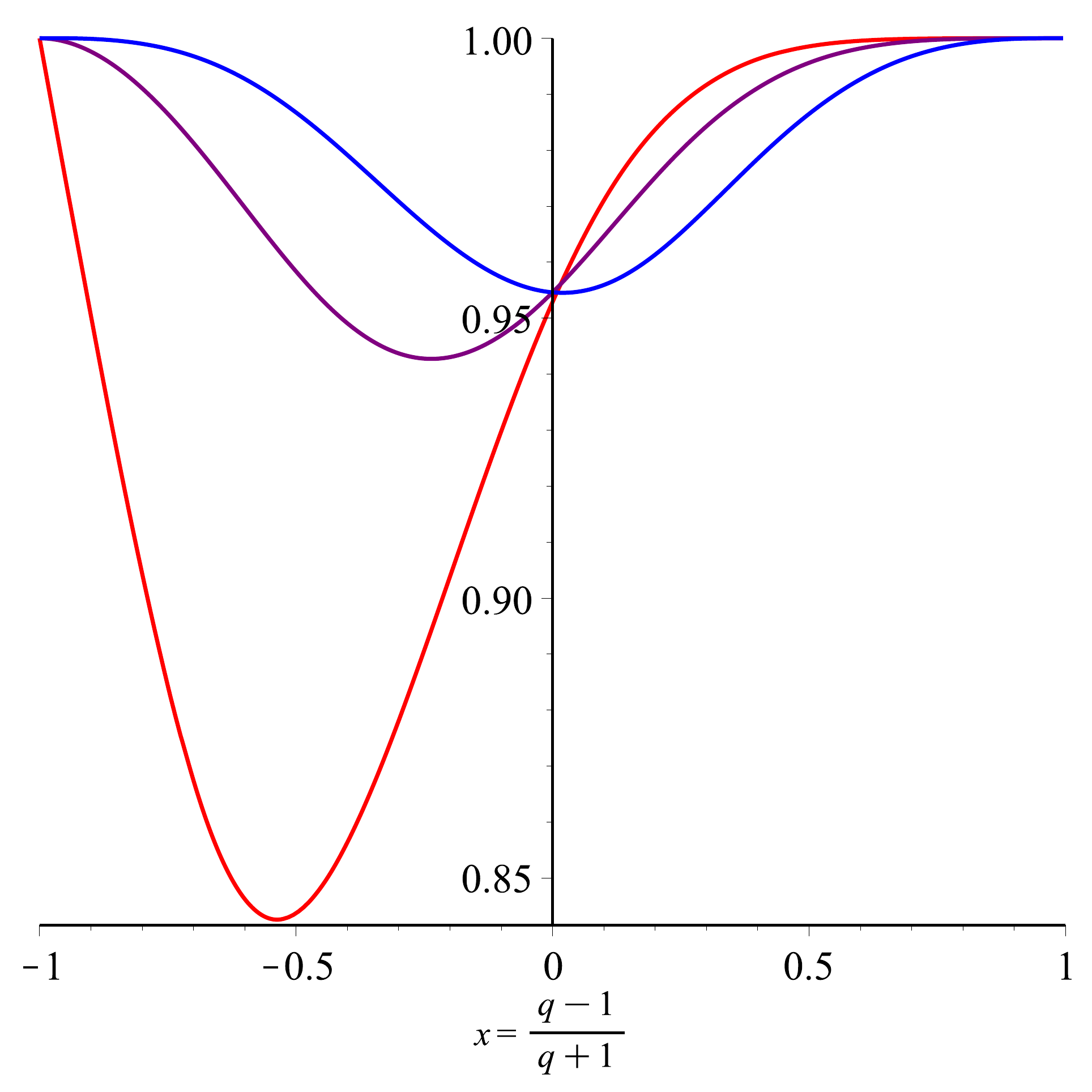}
\caption{Plots of $\rho(\sigma,q)$ with $q=\frac{1+x}{1-x}$ for $\sigma=1243$ {\color[rgb]{1,0,0} (red)}, $\sigma=1342$ {\color[rgb]{.5,0,0.5} (purple)}, and $\sigma=1432$ {\color[rgb]{0,0,1} (blue)}. The intersection of these curves with the $y$-axis coincides for $1342$ and $1432$ (see~\cite{EliCMP} for an explanation) but it is slightly lower for the pattern $1243$. We also point out that the curve for $1432$ is not perfectly symmetric with respect to the $y$-axis.}
\label{fig:1b_m4}
\end{figure}

\section{Comparisons among patterns}\label{section:compare}

\begin{theorem}\label{comparison:theorem}
For $q\ge1$ and every $n$, we have $$P_n(132,q)\le P_n(123,q),$$ and so $\rho (132,q)\le \rho(123,q)$ for $q\ge1$.
Equivalently, for $q\le1$ and every $n$, we have $P_n(231,q)\le P_n(321,q)$, and so $\rho (231,q)\le \rho(321,q)$ for $q\le1$.
\end{theorem}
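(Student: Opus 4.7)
Since the two statements of the theorem are equivalent via the reversal identity~\eqref{eq:reversal} (using $132^r=231$ and $123^r=321$, which give $P_n(132,q)=P_n(231,1/q)$ and $P_n(123,q)=P_n(321,1/q)$), it suffices to establish the first: $P_n(132,q)\le P_n(123,q)$ for every $q\ge 1$ and every $n$. The plan is to construct a weight-monotone injection
\[
\phi\colon\{\pi\in\S_n: c_{123}(\pi)\ge 1\}\longrightarrow\{\pi\in\S_n: c_{132}(\pi)\ge 1\}
\]
with $\inv(\phi(\pi))=\inv(\pi)+1$ for every $\pi$ in the domain. Granting such a $\phi$, the inequality $q^{\inv(\phi(\pi))}\ge q^{\inv(\pi)}$ for $q\ge 1$ yields
\[
\sum_{\pi\notin\S_n(123)}q^{\inv(\pi)}\;\le\;\sum_{\pi\in\phi(\{c_{123}\ge 1\})}q^{\inv(\pi)}\;\le\;\sum_{\pi\notin\S_n(132)}q^{\inv(\pi)},
\]
and dividing by $[n]_q!$ and subtracting from $1$ delivers the termwise probability inequality. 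Taking $n$-th roots and letting $n\to\infty$ then gives $\rho(132,q)\le\rho(123,q)$, and the second half follows by the reversal argument.

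The natural candidate for $\phi$ is: given $\pi$ containing 123, select a canonical 123-occurrence at position $i$ and set $\phi(\pi)$ to be $\pi$ with the entries $\pi_{i+1}$ and $\pi_{i+2}$ transposed. Because $\pi_i<\pi_{i+1}<\pi_{i+2}$ becomes $\pi_i<\pi_{i+2}<\pi_{i+1}$ after the swap, the triple at position $i$ is now a 132-occurrence and $\phi(\pi)$ lies in the codomain; moreover the adjacent transposition of two entries in increasing order contributes exactly one new inversion, so the inversion count is increased by precisely one as required.

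The principal obstacle is selecting the canonical $i$ so that $\phi$ is injective. The naive choice of the leftmost 123-position already fails: one can construct pairs $\pi,\tilde\pi$ of 123-containing permutations that differ at two independent non-overlapping adjacent pairs $(\pi_{i+1},\pi_{i+2})$ and $(\pi_{j+1},\pi_{j+2})$, each sitting inside a distinct 123-occurrence, and which share a common image under the leftmost-position swap. My proposed refinement is to require that the chosen 123-position in $\pi$ coincide with the leftmost 132-position of the swapped permutation; the inverse map (swap at the leftmost 132 of the image) is then single-valued, and the residual permutations where this compatibility fails can be handled by an auxiliary involution or by iterating the construction on the defect set. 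An alternative algebraic route via Theorem~\ref{thm:cluster}, namely trying to show coefficient-wise that $F_{123}(q,z)-F_{132}(q,z)=F_{123}(q,z)F_{132}(q,z)\bigl(\omega_{132}(q,z)-\omega_{123}(q,z)\bigr)$ has nonnegative coefficients, does not succeed by a direct argument because $\omega_{132}-\omega_{123}$ itself has coefficients of both signs for $q\ge 1$ (for instance the coefficient of $z^5$ is negative), so a more refined regrouping of cluster contributions would be needed.
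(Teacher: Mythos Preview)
Your proposal has a genuine gap that you yourself acknowledge: the sentence ``the residual permutations where this compatibility fails can be handled by an auxiliary involution or by iterating the construction on the defect set'' is not an argument, and without it your map $\phi$ is not shown to be injective. The alternative algebraic route you sketch is, as you note, also inconclusive. So as written this is not a proof.

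The missing idea is a simple case distinction that sidesteps the injectivity difficulty entirely. Define $\Psi$ on $\S_n\setminus\S_n(123)$ by setting $\Psi(\pi)=\pi$ whenever $\pi$ already contains a $132$; only on the remaining set, namely permutations containing $123$ but avoiding $132$, do you need to do any work. On this much smaller and more rigid set one can use the map from~\cite{EliNoy}: traverse $\pi$ from left to right and, at each occurrence of $123$, swap the entries playing the roles of $2$ and $3$. This map is known to send $\{\text{contains }123,\text{ avoids }132\}$ injectively into $\{\text{contains }132\}$, and its image is disjoint from the identity branch. Every swap turns an ascent into a descent, so $\inv(\Psi(\pi))\ge\inv(\pi)$ in both branches, which for $q\ge 1$ gives $q^{\inv(\Psi(\pi))}\ge q^{\inv(\pi)}$ and hence the desired inequality on the complements.

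Note also that your requirement $\inv(\phi(\pi))=\inv(\pi)+1$ is stronger than necessary and is precisely what forces you to avoid the identity branch; relaxing it to $\inv(\Psi(\pi))\ge\inv(\pi)$ is what makes the clean case split possible.
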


\begin{proof}
In~\cite{EliNoy}, the authors give an injection $\Psi:\S_n\setminus\S_n(123)\to\S_n\setminus\S_n(132)$ defined as follows. If $\pi$ contains both $123$ and $132$, let $\Psi(\pi)=\pi$, otherwise, let $\Psi(\pi)$ be the permutation obtained by traversing $\pi$ from left to right and replacing each occurrence of $123$ with one of $132$, by switching the entries playing the role of 2 and 3. It is clear that $\inv(\Psi(\pi))\ge \inv(\pi)$ for every $\pi\in\S_n\setminus\S_n(123)$, and so $q^{\inv(\Psi(\pi))}\ge q^{\inv(\pi)}$ for $q\ge1$. Since $\Psi$ is an injection, we have
$$\sum_{\tau\in\S_n\setminus\S_n(132)} q^{\inv(\tau)}\ge \sum_{\pi\in\S_n\setminus\S_n(123)} q^{\inv(\Psi(\pi))}\ge \sum_{\pi\in\S_n\setminus\S_n(123)} q^{\inv(\pi)}$$ for $q\ge1$, and so
$$\sum_{\tau\in\S_n(132)} q^{\inv(\tau)}\le \sum_{\pi\in\S_n(123)} q^{\inv(\pi)}.$$
Dividing by $[n]_q!$ on both sides, we get that  $P_n(132,q)\le P_n(123,q)$ for $q\ge1$. The other statements now follow immediately.
\end{proof}

In Section \ref{section:Stein}, we extend our discussion of pattern avoidance, which specializes to the setting in which a pattern occurs exactly zero times, by considering the distribution of the number of times a pattern occurs in a random permutation.
For example, by looking just at the expected number of occurrences of the patterns $132$ and $231$ in a random permutation of size~$n$ from the Mallows distribution with parameter $q>0$, we obtain $(n-2) q/[3]_q!$ and $(n-2)q^2/[3]_q!$, respectively. 
From this observation, we see that there are more occurrences \emph{on average} of $231$ than $132$ when $q >1$ and vice versa for $q < 1$.
It is natural to conjecture that the probability of avoiding $231$ ought to be smaller than the probability of avoiding $132$ when $q>1$, and vice versa when $0<q<1$. 
The corresponding statement in the limit, at the level of growth rates, can be verified by looking at Figure \ref{fig:132_x}, which shows that for $q=\frac{1+x}{1-x}<1$ (equivalently, $x<0$), we have
$\rho(132,q)<\rho(132,1/q)=\rho(231,q)$, since the transformation $q\leftrightarrow 1/q$ corresponds to the reflection $x\leftrightarrow -x$ in the picture.

There are, however, interesting characteristics other than the growth rate $\rho(\sigma,q)$ of the pattern avoidance probability, such as the asymptotic distribution of the number of occurrences of a pattern and its rate of convergence.
These properties rely on calculations other than the marginal probability $P_n(\sigma,q)$ needed to obtain the expected number of occurrences, and therefore reveal more intricate structural features of pattern avoiding permutations.
We illustrate this further in our comparison of patterns $1432$, $2341$ and $2413$ in Sections \ref{section:bounds} and \ref{section:Stein}.

\subsection{Monotonicity of avoiding monotone patterns}

Increasing the value of $q$ in the Mallows distribution gives higher probability to permutations with more inversions. 
It is therefore natural to conjecture that the probability of avoiding the pattern $12\dots m$ increases with $q$. This conjecture is supported by our numerical evidence in Figures~\ref{fig:monotone_x01}, \ref{fig:123_x_neg_tail}, and~\ref{fig:123_x_all_tail}.

\begin{conjecture}\label{conj:123increasing}
If $q<q'$, then $P_n(12\dots m,q)<P_n(12\dots m,q')$ and $\rho(12\dots m,q)<\rho(12\dots m,q')$.
\end{conjecture}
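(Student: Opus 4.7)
The plan is to reduce Conjecture~\ref{conj:123increasing} to a covariance inequality under the Mallows measure and attempt to prove that inequality via an equivalent monotone likelihood ratio condition on the distribution of inversions restricted to pattern-avoiders.

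First, since Mallows$(q)$ is an exponential family with natural parameter $\log q$ and sufficient statistic $\inv$, writing $f_n(q) := P_n(12\dots m, q) = \e_q[\mathbf{1}_A]$ for $A = \S_n(12\dots m)$ yields the standard identity
$$q\, f_n'(q) \;=\; \e_q[\inv \cdot \mathbf{1}_A] - \e_q[\mathbf{1}_A]\,\e_q[\inv] \;=\; \mathrm{Cov}_q(\mathbf{1}_A, \inv).$$
Thus $P_n$ is strictly increasing on $(0,\infty)$ as soon as $\mathrm{Cov}_q(\mathbf{1}_A, \inv) > 0$ for every $q > 0$. The weak inequality $\rho(12\dots m, q) \le \rho(12\dots m, q')$ for $q < q'$ is then immediate from $f_n(q) \le f_n(q')$ by taking $n$th roots and letting $n \to \infty$; upgrading to strictness at the $\rho$-level requires a quantitative lower bound of the form $f_n(q')/f_n(q) \ge (1+c)^n$ for some $c = c(q,q') > 0$, which I would aim for as a byproduct of the covariance estimates below.

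Next I recast the covariance inequality combinatorially. Setting $a_k = |\S_n(12\dots m) \cap \{\inv = k\}|$ and $b_k = |\{\pi \in \S_n : \inv(\pi) = k\}|$, an elementary manipulation shows that the monotone likelihood ratio condition
$$a_j\, b_k \;\le\; a_k\, b_j \quad \text{for all } 0 \le j < k \le \tbinom{n}{2}$$
implies $\mathrm{Cov}_q(\mathbf{1}_A, \inv) \ge 0$ for every $q > 0$. The inequality asserts the natural-sounding statement that, conditional on the number of inversions, the probability of avoiding $12\dots m$ is non-decreasing in that number. For the case $n=4$, $m=3$, one computes $(a_k) = (0, 1, 3, 4, 5, 3, 1)$ and $(b_k) = (1, 3, 5, 6, 5, 3, 1)$, and the ratios $0, 1/3, 3/5, 2/3, 1, 1, 1$ are indeed non-decreasing, lending some numerical support to the inequality.

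The main obstacle is establishing this likelihood ratio inequality in general. My preferred attack is to construct, for each pair $j < k$, an injection from pairs $(\pi, \tau)$ with $\pi \in \S_n(\sigma)$, $\inv(\pi) = j$, $\tau \in \S_n \setminus \S_n(\sigma)$, $\inv(\tau) = k$, to pairs with the inversion counts swapped, by localizing at the leftmost occurrence of $12\dots m$ in $\tau$ and performing a canonical local modification there --- in the spirit of the Elizalde--Noy injection used in the proof of Theorem~\ref{comparison:theorem} --- while simultaneously adding matching inversions to $\pi$ at a coordinated canonical position. The probabilistic alternative is to work in the product representation $\inv = \sum_{k=2}^n Y_k$ of Mallows$(q)$, where $Y_k$ records the number of inversions created by the insertion of $k$ and $\P(Y_k = i) = q^i/[k]_q$, and apply a correlation inequality; however, standard coordinate-wise FKG fails because $\mathbf{1}_A$ is not monotone in the $Y_k$'s (increasing a single $Y_k$ can simultaneously create and destroy occurrences of $12\dots m$, as short numerical experiments show). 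A potentially shorter route is to prove directly that the longest ascending run $r(\pi)$ is stochastically non-increasing in $\inv(\pi)$ under the Mallows measure, which would yield the likelihood ratio inequality since $A = \{r(\pi) \le m - 1\}$; finding the right structural insight --- whether an explicit injection or a product decomposition under which $\mathbf{1}_A$ becomes monotone --- is where I expect the bulk of the technical work.
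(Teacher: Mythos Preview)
This statement is a \emph{conjecture} in the paper, and the paper explicitly does not prove it: the authors note that it would follow from the monotonicity of the sequence $\frac{|\{\pi\in\S_n(12\dots m):\inv(\pi)=k\}|}{|\{\pi\in\S_n:\inv(\pi)=k\}|}$ in $k$, mention a probabilistic coupling alternative, and then write ``We have been unable to establish the conjecture using either of these techniques, and so we leave this as an open problem.'' So there is no proof in the paper to compare against.

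Your proposal is not a proof either. The exponential-family differentiation yielding $q\,f_n'(q)=\mathrm{Cov}_q(\mathbf{1}_A,\inv)$ is correct and is a clean way to see that the conjecture for $P_n$ is equivalent to positivity of this covariance; and your reduction to the monotone likelihood ratio condition $a_j b_k\le a_k b_j$ is exactly the condition the paper isolates (phrased there as monotonicity of $a_k/b_k$). But from that point on you only \emph{describe} three possible attacks --- a bijective injection on pairs, coordinatewise FKG in the $Y_k$-coordinates (which you yourself correctly note fails because $\mathbf{1}_A$ is not monotone in those coordinates), and stochastic monotonicity of the longest ascending run --- without carrying any of them to completion. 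The closing sentence (``finding the right structural insight \dots\ is where I expect the bulk of the technical work'') is an accurate self-assessment: the hard part is entirely absent.

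In short, your write-up reproduces, with a somewhat nicer analytic framing, precisely the reduction and the candidate approaches already recorded in the paper's discussion of the conjecture, and it stops at the same obstacle the authors stopped at. Two additional loose ends even within the plan: (i) the MLR condition only gives $\mathrm{Cov}_q\ge 0$, so strictness of $P_n(q)<P_n(q')$ needs the extra (easy) remark that $a_k/b_k$ is not constant in $k$; and (ii) as you acknowledge, strict inequality at the level of $\rho$ does not follow from strict inequality for each $P_n$ without a quantitative, $n$-uniform estimate, and you have not supplied one.
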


This conjecture would follow if we could show that the sequence $$\frac{|\{\pi\in\S_n(12\dots m):\inv(\pi)=k\}|}{|\{\pi\in\S_n:\inv(\pi)=k\}|}$$ is weakly increasing in $k$. 
Note that, using inversion tables to represent permutations, the above denominator equals the number of sequences $(a_1,\dots,a_n)$ with $0\le a_i\le i-1$ for all $i$ and $\sum_i a_i =k$, whereas the numerator equals the number of such sequences that additionally do not contain three consecutive entries with $a_i\ge a_{i+1}\ge a_{i+2}$.

A more probabilistic approach, which appears to suffer from similar difficulties, is to obtain a coupling between the two bumping processes driving the Mallows$(q)$ distribution in order to establish stochastic dominance between the number of occurrences of $12\dots m$ in Mallows($q$) and Mallows($q'$) distributions for $q<q'$; see~\cite{CraneDeSalvo2015} for the relevant definitions. 

We have been unable to establish the conjecture using either of these techniques, and so we leave this as an open problem.

\section{Bounds on $\rho(\sigma, q)$}\label{section:bounds}
Our techniques above allow us to approximate the growth rate for those patterns whose generating function is sufficiently tractable to analyze. 
In the absence of explicit generating functions, we establish upper and lower bounds for the growth rate using different techniques.
In this section, we generalize many of the results in~\cite{Perarnau} to bound $\rho(\sigma,q)$, for arbitrary $\sigma\in\S_m$ and $q>0$, using Suen's inequality and a version of the Lov\'asz local lemma.  
The analysis in~\cite{Perarnau} specializes to $\rho(\sigma,1)$ only.

For any permutation $\pi=\pi_1\dots\pi_n$ and any pattern $\sigma\in\S_m$, we can mark the occurrences of $\sigma$ in $\pi$ by mapping $\pi$ to a $\{0,1\}$-valued sequence $(x_1,\ldots,x_{n-m+1})$, with
\begin{equation}\label{eq:markings}x_j=\left\{\begin{array}{cc} 1,& \st(\pi_j\pi_{j+1}\dots\pi_{j+m-1})=\sigma,\\ 0,& \text{otherwise.}\end{array}\right.\end{equation}
If $\pi$ is a random permutation of $n$ drawn from the Mallows($q$) distribution, then the associated sequence of markings $(x_1,\ldots,x_{n-m+1})$ is also random and $N_n(\sigma,q):=\sum_{j=1}^{n-m+1}x_j$ is the random variable counting the number of occurrences of $\sigma$ in $\pi$.

For each $j=1,\ldots,n-m+1$, we define $A_j\equiv A_j(\sigma,q,n):=\{x_j=1\}$ to be the event that $\sigma$ occurs in position $j$ of a Mallows($q$) permutation.
We write $A_j^c=\{x_j=0\}$ to denote the complement of $A_j$, and $\mathcal{A}=\{A_1,\ldots,A_{n-m+1}\}$.
We define the {\em dependency graph} $H$ of $\mathcal{A}$ to be the graph with vertex set $V(H) = \{1, \ldots, n-m+1\}$ and edge set $E(H)$ with an edge between nodes $i$ and $j$ if and only if $1 \leq |i-j| \leq m-1$. 
By the weak dissociation property of the Mallows distribution, if any two disjoint subsets $S, T \subset \{1,\ldots, n-m+1\}$ are such that there are no edges between the nodes in $S$ and $T$ in the dependency graph $H$, then the sets of events $\{A_i\}_{i \in S}$ and $\{A_j\}_{j \in T}$ are independent.

\begin{proposition}\label{prop:generic:upper}
Fix any $m \geq 3$, $\sigma \in S_m$, and $q>0$.  Then we have 
\begin{equation}\label{generic:upper:bound} \rho(\sigma, q) \leq \left(1 - \frac{q^{\inv(\sigma)}}{[m]_q!}\right)^{1/m}. \end{equation}
In particular, the right-hand side of Equation~\eqref{generic:upper:bound} is the same for all patterns $\sigma \in S_m$ with the same number of inversions. 
\end{proposition}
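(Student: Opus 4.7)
The plan is to exploit weak dissociation and consecutive homogeneity of the Mallows distribution to reduce pattern avoidance on long permutations to an independent product of identical events on disjoint windows. Concretely, I would fix $n = km$ a multiple of $m$ and look at the $k$ events $A_1, A_{m+1}, A_{2m+1}, \dots, A_{(k-1)m+1}$, each indicating that $\sigma$ occurs starting at the given position. These windows of length $m$ are pairwise disjoint, so by weak dissociation the events are mutually independent; by consecutive homogeneity each has probability
\[
\mathbb{P}(A_{jm+1}) = \frac{q^{\inv(\sigma)}}{[m]_q!},
\]
since the standardization of an $m$-window in a Mallows$(q)$ permutation is itself Mallows$(q)$ on $\mathcal{S}_m$, and $\sigma$ has Mallows weight $q^{\inv(\sigma)}/[m]_q!$.

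Next, I would observe that $\sigma$-avoidance of $\pi$ is stronger than just avoiding $\sigma$ at these $k$ specific positions, so
\[
P_{km}(\sigma,q) \;\le\; \mathbb{P}\!\left(\bigcap_{j=0}^{k-1} A_{jm+1}^c\right) \;=\; \prod_{j=0}^{k-1}\mathbb{P}(A_{jm+1}^c) \;=\; \left(1-\frac{q^{\inv(\sigma)}}{[m]_q!}\right)^{k}.
\]
Taking $n$-th roots with $n = km$ yields
\[
P_{km}(\sigma,q)^{1/(km)} \le \left(1-\frac{q^{\inv(\sigma)}}{[m]_q!}\right)^{1/m}.
\]

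Finally, to pass to $\rho(\sigma,q)$, I would invoke Theorem~\ref{thm:rho}, which guarantees that the limit $\rho(\sigma,q) = \lim_{n\to\infty} P_n(\sigma,q)^{1/n}$ exists. Since the full limit exists, it agrees with the limit taken along the subsequence $n = km$, $k\to\infty$, and so the bound transfers directly to $\rho(\sigma,q)$. The final remark in the statement, that the bound depends on $\sigma$ only through $\inv(\sigma)$, is then immediate from the form of the right-hand side.

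There is no serious obstacle here; the only subtlety is making sure that the windows are truly disjoint, which requires spacing starting positions by exactly $m$ (not $m-1$), and the mild step of justifying the subsequence limit via existence of $\rho$. Both are routine given the results already established in Sections~\ref{section:GJ} and~\ref{section:growth}.
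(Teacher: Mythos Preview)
Your argument is correct and is essentially the same as the paper's: both pick starting positions $1, m+1, 2m+1, \dots$ so that the corresponding windows are disjoint, invoke weak dissociation for independence and consecutive homogeneity for $\mathbb{P}(A_j)=q^{\inv(\sigma)}/[m]_q!$, and bound $P_n(\sigma,q)$ by the product. The only cosmetic difference is that you restrict to $n=km$ and pass to the limit along this subsequence via Theorem~\ref{thm:rho}, whereas the paper works with general $n$ and the index set $I=\{1+km:0\le k<n/m\}$; your version is arguably the cleaner of the two.
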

\begin{proof}
We follow the same routine as in~\cite[p.\ 1002]{Perarnau}.
We define the index set $I := \{1+k\,m : 0 \leq k < n/m\}$ so that 
\[ \P(N_n(\sigma, q) = 0) = \P\left(\bigcap_{i=1}^{n-m+1} A_i^c\right) \leq \P\left(\bigcap_{i \in I} A_i^c\right) = \prod_{i \in I} \left(1-\P\left(A_i\ \middle|\ \bigcap_{j \in I, j<i} A_j^c\right) \right).\]
By the weak dissociation property, events $A_i$ and $A_j$ are independent for $i \in S = \{i\}$ and $j \in T = \{j:j\in I, j<i\}$, and by consecutive homogeneity $P(A_j)=P(A_1)=q^{\inv(\sigma)}/[m]_q!$ for every $j$.
Combining this with the fact that $|I| \leq n/m$, we have 
\[P\left(A_i\ \middle|\ \bigcap_{j\in I, j<i}A_j^c\right)=P(A_i)=P(A_1)=\frac{q^{\inv(\sigma)}}{[m]_q!}\]
and
\[ \P(N_n(\sigma,q) = 0) \leq \prod_{i \in I} \left(1 - \frac{q^{\inv(\sigma)}}{[m]_q!}\right) \leq \left(1 - \frac{q^{\inv(\sigma)}}{[m]_q!}\right)^{n/m}.  \qedhere\]
\end{proof}

Suen's inequality provides a notable improvement to the above upper bound in many instances by taking into account interactions between dependent events.  
Let $E(H)$ denote the set of edges in the previously defined dependency graph $H$. 
We define
\[ \Delta := \sum_{\{i,j\}\in E(H)} \P(A_i \cap A_j), \] 
and 
\[ \delta := \max_{1 \leq i \leq n-m+1} \sum_{j : \{i,j\} \in E(H)} \P(A_j). \]

The following is an improvement to Suen's original inequality due to Janson~\cite[Theorem~2]{SuenByJanson}.
\begin{theorem}[Suen's inequality]\label{Suen:inequality}
Let $\{x_i\}_{i =1,\ldots,n-m+1}$ be a finite family of indicator random variables with dependency graph $H$ and define $N = \sum_{i=1}^{n-m+1} x_i$. 
Let $\mu_i := \e\, x_i$, $i=1,\ldots, n-m+1$.
Then 
\[ \P(N = 0) \leq \exp\left( -\sum_{i=1}^{n-m+1}\mu_i + \Delta  e^{2\delta}\right). \]
\end{theorem}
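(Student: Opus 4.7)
The plan is to follow Janson's approach. Start from the product formula
\[\P(N=0)=\prod_{i=1}^{n-m+1}\P(A_i^c \mid A_1^c\cap\cdots\cap A_{i-1}^c),\]
and apply the elementary inequality $1-x \leq e^{-x}$ to each factor. This reduces the theorem to proving the lower bound
\[\sum_{i=1}^{n-m+1} \P(A_i \mid A_1^c\cap\cdots\cap A_{i-1}^c) \geq \sum_{i=1}^{n-m+1}\mu_i - \Delta\, e^{2\delta},\]
that is, showing that the total ``shortfall'' $\sum_i [\mu_i - \P(A_i\mid A_1^c\cap\cdots\cap A_{i-1}^c)]$ is at most $\Delta\, e^{2\delta}$.

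To bound the shortfall term by term, for each $i$ I would split the conditioning into a ``dependent'' past $D_i = \bigcap_{j<i,\,\{i,j\}\in E(H)} A_j^c$, involving only neighbors of $i$ in the dependency graph, and an ``independent'' past $E_i = \bigcap_{j<i,\,\{i,j\}\notin E(H)} A_j^c$, involving only non-neighbors. By the defining property of $H$, the event $A_i$ is independent of the $\sigma$-algebra generated by the non-neighbor events, so conditioning on $E_i$ alone has no effect on $\P(A_i)$. Writing
\[\P(A_i \mid D_i \cap E_i) = \frac{\P(A_i \cap D_i \cap E_i)}{\P(D_i \cap E_i)},\]
the independence allows this ratio to be rearranged so that $\P(A_i)=\mu_i$ becomes the natural point of comparison, with the deviation driven entirely by the neighbor events appearing in $D_i$.

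The heart of the matter is then establishing a per-index inequality of the form
\[\mu_i - \P(A_i \mid D_i \cap E_i) \leq e^{2\delta}\sum_{j<i,\,\{i,j\}\in E(H)} \P(A_i\cap A_j),\]
from which the theorem follows immediately by summation over $i$, since every edge $\{i,j\}\in E(H)$ contributes $\P(A_i \cap A_j)$ exactly once to the resulting double sum, yielding $\Delta$. To prove this per-index bound, I would expand $\mu_i - \P(A_i \mid D_i\cap E_i)$ via an inclusion-exclusion over the neighbor events $\{A_j : j<i,\ \{i,j\}\in E(H)\}$, and then bound the higher-order terms inductively. The parameter $\delta$, which caps the total expected occupancy of any neighborhood, governs the geometric growth of these tail contributions; one factor of $e^{\delta}$ arises from controlling the numerator and another from the denominator, producing $e^{2\delta}$.

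The main obstacle will be extracting the sharp constant $e^{2\delta}$ in the correction, rather than a compounded factor such as $e^{k\delta}$ that naively accumulates across the peeling steps. This is precisely where Janson's refinement of Suen's original argument improves the classical estimate, and it demands that the conditional probabilities produced by the peeling procedure be controlled uniformly in $i$ by the \emph{unconditional} neighborhood sums bounded by $\delta$, rather than by sums that grow with the conditioning depth. Once this uniform control is in hand, the per-index shortfalls telescope cleanly and the theorem follows.
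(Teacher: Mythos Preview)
The paper does not prove this theorem; it is quoted as a known result and attributed to Janson~\cite[Theorem~2]{SuenByJanson}, so there is no ``paper's own proof'' to compare your proposal against. Your sketch correctly identifies the skeleton of Janson's argument---the product decomposition of $\P(N=0)$, the split of the conditioning into a dependent past $D_i$ (neighbors in $H$) and an independent past $E_i$ (non-neighbors), and the per-index shortfall bound that sums to $\Delta e^{2\delta}$---and you honestly flag the hard step, namely pinning down the sharp constant $e^{2\delta}$ rather than something worse. Since the paper simply imports the inequality as a black box, your write-up already goes further than the paper does; it is, in effect, an outline of the proof the paper cites rather than supplies. If you intend to include a full proof, the place where your outline is thinnest is exactly the step you identify: the inductive control of the inclusion--exclusion tail that produces $e^{2\delta}$ requires a careful bookkeeping argument (in Janson's paper this is handled via an auxiliary inequality bounding ratios of conditional probabilities), and your sketch would need that lemma stated and proved to be complete.
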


We are thus able to produce an improved upper bound for $\rho(\sigma,q)$ using the overlap set of $\sigma$, defined below.

\begin{definition}[Overlap set]\label{overlap}
For every $\sigma \in S_m$ and $1 \leq s \leq m-1$, we define the \emph{overlap set of $\sigma$ of size~$s$} as the set of permutations $\tau \in S_{2m-s}$ such that $\st(\tau_1\cdots\tau_m) = \st(\tau_{m-s+1}\cdots\tau_{2m-s}) = \sigma$, denoted by $ \Ov_s(\sigma)$.  We similarly define $ \Ov(\sigma) := \bigcup_{s}  \Ov_s(\sigma)$ as the \emph{overlap set of $\sigma$}. 
\end{definition}

\begin{remark}
In the language of Section \ref{section:GJ}, each $\tau\in\Ov_s(\sigma)$ corresponds to a $2$-cluster $(\tau;1,m-s+1)$ of $\sigma$, whose only marked occurrences of $\sigma$ overlap in exactly $s$ positions and occur at the beginning and end of $\tau$.
\end{remark}  

\begin{proposition}\label{prop:improved}
Fix any $m \geq 3$, $\sigma \in S_m$ and $q>0$, and define 
\begin{equation}\label{Ts:def} T(s,\sigma,q) := \sum_{\tau \in  {\small\Ov}_s(\sigma)} \frac{q^{\inv(\tau)}}{[2m-s]_q!}.
\end{equation}
Then
\begin{equation}\label{improved:upper:bound} \rho(\sigma, q) \leq \exp\left(-\frac{q^{\inv(\sigma)}}{[m]_q!}  + \exp\left( 4(m-1) \frac{q^{\inv(\sigma)}}{[m]_q!}\right)\ \sum_{s=1}^{m-1}T(s,\sigma,q) \right). \end{equation}
\end{proposition}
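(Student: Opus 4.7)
The plan is to apply Janson's version of Suen's inequality (Theorem~\ref{Suen:inequality}) directly to the indicator family $\{x_j\}_{j=1}^{n-m+1}$ from~\eqref{eq:markings}, using the dependency graph $H$ introduced before the proposition, and then to take $n$-th roots as $n\to\infty$ to extract the bound on $\rho(\sigma,q)$. Since $P_n(\sigma,q)=\P(N_n(\sigma,q)=0)$ and $\rho(\sigma,q)=\lim_n P_n(\sigma,q)^{1/n}$ by Theorem~\ref{thm:rho}, every quantity appearing in Suen's bound only needs to be controlled up to a multiplicative factor linear in $n$ (or a constant), so that after taking the $n$-th root the leading constant-order contribution is what survives.

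The three inputs to Suen's bound are computed as follows. First, by consecutive homogeneity of the Mallows$(q)$ distribution, $\mu_i=\mathbb{E}[x_i]=\P(A_i)=q^{\inv(\sigma)}/[m]_q!$ for every $i$, so $\sum_{i=1}^{n-m+1}\mu_i=(n-m+1)\,q^{\inv(\sigma)}/[m]_q!$. Second, each vertex of $H$ has at most $2(m-1)$ neighbors (the indices $j$ with $1\le|i-j|\le m-1$), and each such neighbor contributes $q^{\inv(\sigma)}/[m]_q!$ by consecutive homogeneity, so $\delta\le 2(m-1)\,q^{\inv(\sigma)}/[m]_q!$ and hence $e^{2\delta}\le\exp\!\bigl(4(m-1)\,q^{\inv(\sigma)}/[m]_q!\bigr)$.

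The most substantive step is the evaluation of $\Delta$. Fix an edge $\{i,j\}\in E(H)$ with $j-i=m-s$ for some $s\in\{1,\dots,m-1\}$. The event $A_i\cap A_j$ asserts that $\sigma$ occurs at position $i$ and at position $j$ simultaneously, so the standardization of the block $\pi_i\pi_{i+1}\dots\pi_{j+m-1}$ of length $2m-s$ must be a permutation whose first and last $m$ entries both standardize to $\sigma$—that is, an element of $\Ov_s(\sigma)$. Applying consecutive homogeneity to this block of length $2m-s$ yields
\[
\P(A_i\cap A_j)=\sum_{\tau\in\Ov_s(\sigma)}\frac{q^{\inv(\tau)}}{[2m-s]_q!}=T(s,\sigma,q).
\]
The number of ordered pairs at distance exactly $m-s$ inside $\{1,\dots,n-m+1\}$ is at most $n$, so summing over $s$ gives $\Delta\le n\sum_{s=1}^{m-1}T(s,\sigma,q)$.

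Plugging these into Theorem~\ref{Suen:inequality} yields
\[
P_n(\sigma,q)\le\exp\!\Biggl(-(n-m+1)\,\frac{q^{\inv(\sigma)}}{[m]_q!}+n\,\exp\!\Bigl(4(m-1)\tfrac{q^{\inv(\sigma)}}{[m]_q!}\Bigr)\sum_{s=1}^{m-1}T(s,\sigma,q)\Biggr).
\]
Taking $n$-th roots and letting $n\to\infty$, the factor $(n-m+1)/n\to 1$ and the $n$ in front of the $T$-sum becomes a $1$, giving the desired bound on $\rho(\sigma,q)$. The only place where there is any real subtlety is in identifying $\P(A_i\cap A_j)$ with $T(s,\sigma,q)$ via the overlap set—the remaining steps are bookkeeping combined with the generic application of Suen's inequality already carried out in~\cite{Perarnau} for the uniform case $q=1$.
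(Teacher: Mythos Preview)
Your proof is correct and follows essentially the same route as the paper: apply Suen's inequality to the indicators $\{x_j\}$ with the dependency graph $H$, compute $\sum_i\mu_i$, $\delta$, and $\Delta$ via consecutive homogeneity and the overlap sets (the paper records the exact value $\Delta=\sum_{s=1}^{m-1}(n-2m+1+s)T(s,\sigma,q)$ rather than just the upper bound $\Delta\le n\sum_s T(s,\sigma,q)$, but this makes no difference after taking $n$-th roots), and then pass to the limit using $(n-m+1)/n\to 1$. The only minor quibble is terminological: $\Delta$ sums over \emph{edges} (unordered pairs), so your count is really of pairs $i<j$ with $j-i=m-s$, of which there are $n-2m+1+s\le n$---your bound is correct, just be careful with the word ``ordered.''
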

\begin{proof}
For fixed $m \geq 3$, $\sigma\in S_m$, and $q>0$, with $n \geq 2m-1$, let $x_1,\dots,x_{n-m+1}$ be the markings defined in \eqref{eq:markings} for a Mallows($q)$ permutation and let $A_j=\{x_j=1\}$ be as defined above.
We have
\[ \P(A_j) = \frac{q^{\inv(\sigma)}}{[m]_q!}, \qquad \P(A_j \cap A_i) = \e x_i x_j, \]
whence 
\begin{equation}\label{eq:sumexx}
\Delta = \sum_{s=1}^{m-1} \sum_{i=1}^{n-2m+1+s} \e \, x_i \, x_{i+m-s} = \sum_{s=1}^{m-1} (n-2m+1+s) T(s,\sigma,q), 
\end{equation}
and also 
\[ \delta =  2(m-1) \frac{q^{\inv(\sigma)}}{[m]_q!}. \]

We then apply Theorem~\ref{Suen:inequality} by noting that $(n-m+1)/n \leq 1$ and $(n-m+1)/n \to 1$  as $n\to\infty$,  and also $\frac{\Delta}{n} \leq \sum_{s=1}^{m-1} T(s,\sigma,q)$ and 
\[ \frac{\Delta}{n} \to \sum_{s=1}^{m-1} T(s,\sigma,q). \qedhere\]
\end{proof}

The lower bound is a bit more delicate, and uses a version of the Lov\'asz local lemma~\cite[Lemma~2.1]{PeresSchlag} particularly suited to this type of problem.  

\begin{theorem}[Lov\'asz local lemma \cite{PeresSchlag}]\label{LLL}
Let $\{A_j\}_{j=1}^r$ be events in some probability space, and let $\{z_j\}_{j=1}^r$ be a sequence of numbers in $(0,1)$.  For each $i \leq r$, suppose there is an integer $m(i)$ satisfying $0 \leq m(i) \leq i$ such that 
\begin{equation}\label{lower:condition} 
\P\left(A_i \middle| \bigcap_{j < m(i)} A_j^c\right) \leq z_i  \prod_{j=m(i)}^{i-1}(1-z_j). 
\end{equation}
Then, for any $t \in \{1,2,\ldots, r\}$, we have 
$$ \P\left(\bigcap_{i = 1}^t A_j^c\right) \geq \prod_{\ell = 1}^t (1-z_\ell). $$
\end{theorem}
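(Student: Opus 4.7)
The plan is to reduce the desired inequality to a pointwise conditional bound and then induct. Since
\[
\P\!\left(\bigcap_{i=1}^{t} A_i^c\right) \;=\; \prod_{i=1}^{t} \P\!\left(A_i^c \,\middle|\, \bigcap_{j<i} A_j^c\right),
\]
it suffices to prove, for every $i \le r$, the sharper estimate $\P(A_i \mid \bigcap_{j<i} A_j^c) \le z_i$; taking complements factor by factor then delivers the conclusion.

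I would establish this pointwise bound by strong induction on $i$, separately estimating the numerator and denominator of
\[
\P\!\left(A_i \,\middle|\, \bigcap_{j<i} A_j^c\right) \;=\; \frac{\P\!\left(A_i \cap \bigcap_{j<i} A_j^c\right)}{\P\!\left(\bigcap_{j<i} A_j^c\right)}.
\]
For the numerator, dropping the events indexed by $m(i)\le j<i$ and applying the hypothesis~\eqref{lower:condition} gives
\[
\P\!\left(A_i \cap \bigcap_{j<i} A_j^c\right) \;\le\; \P\!\left(A_i \cap \bigcap_{j<m(i)} A_j^c\right) \;\le\; z_i\prod_{j=m(i)}^{i-1}(1-z_j)\cdot\P\!\left(\bigcap_{j<m(i)} A_j^c\right).
\]
For the denominator, the chain rule combined with the inductive hypothesis $\P(A_j^c \mid \bigcap_{j'<j}A_{j'}^c) \ge 1-z_j$ applied to each $m(i) \le j < i$ yields
\[
\P\!\left(\bigcap_{j<i}A_j^c\right) \;\ge\; \P\!\left(\bigcap_{j<m(i)}A_j^c\right)\prod_{j=m(i)}^{i-1}(1-z_j).
\]
Taking the ratio, the common factor $\P(\bigcap_{j<m(i)}A_j^c)\prod_{j=m(i)}^{i-1}(1-z_j)$ cancels exactly, leaving $\P(A_i \mid \bigcap_{j<i} A_j^c) \le z_i$, which closes the induction.

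The base case $i=1$ is immediate from~\eqref{lower:condition} (with the convention that the empty product equals $1$). A minor technical point is to check that every conditioning event has positive probability, so that the conditional probabilities are well defined; the inductive lower bound $\P(\bigcap_{j<i}A_j^c) \ge \prod_{\ell<i}(1-z_\ell) > 0$ (since each $z_\ell \in (0,1)$) handles this along the way. I do not anticipate a serious obstacle---the Peres--Schlag formulation is designed precisely so that the $\prod_{j=m(i)}^{i-1}(1-z_j)$ factor appearing in~\eqref{lower:condition} compensates exactly for the loss incurred in lower-bounding the denominator, so the argument is essentially bookkeeping once the chain-rule structure is in place.
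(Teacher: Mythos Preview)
The paper does not give its own proof of this statement: Theorem~\ref{LLL} is quoted verbatim from Peres--Schlag~\cite{PeresSchlag} and used as a black box in the proof of Proposition~\ref{lower:bound}. So there is no proof in the paper to compare against.

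That said, your argument is correct and is exactly the standard proof of this sequential form of the local lemma. The chain-rule factorisation reduces the problem to the pointwise bound $\P(A_i\mid\bigcap_{j<i}A_j^c)\le z_i$; your strong induction handles this cleanly, with the numerator controlled by monotonicity plus hypothesis~\eqref{lower:condition} and the denominator by the inductive hypothesis applied to indices $m(i)\le j<i$. The cancellation of $\P(\bigcap_{j<m(i)}A_j^c)\prod_{j=m(i)}^{i-1}(1-z_j)$ is precisely the point of the Peres--Schlag formulation, as you observe. Your remark about positivity of the conditioning events is also well placed: the running lower bound $\prod_{\ell<i}(1-z_\ell)>0$ guarantees all conditional probabilities along the way are well defined.
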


\begin{proposition}\label{lower:bound}
Fix any $m \geq 3$, $\sigma \in S_m$, $q>0$, and assume that 
\[ f(q,m) := \frac{1}{2}\left(1 - \frac{q^{\inv(\sigma)}}{[m]_q!} - \sqrt{1-(4m-2)\frac{q^{\inv(\sigma)}}{[m]_q!}+\frac{q^{2\inv(\sigma)}}{[m]_q!^2}}\right) \]
is such that $\frac{q^{\inv(\sigma)}}{[m]_q!} e^{f(q,m)} \in (0,1)$. 
Then 
\begin{equation}\label{analytical:lower:bound} \rho(\sigma,q) \geq 1 - \frac{q^{\inv(\sigma)}}{[m]_q!} \exp\left(\frac{1}{2}\left(1 - \frac{q^{\inv(\sigma)}}{[m]_q!} - \sqrt{1-(4m-2)\frac{q^{\inv(\sigma)}}{[m]_q!}+\frac{q^{2\inv(\sigma)}}{[m]_q!^2}}\right)\right). \end{equation}
\end{proposition}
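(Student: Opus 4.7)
Writing $p := q^{\inv(\sigma)}/[m]_q!$ so that $\P(A_j) = p$ for every $j$ by consecutive homogeneity, the plan is to apply the Lov\'asz local lemma (Theorem~\ref{LLL}) to the events $\{A_i\}_{i=1}^{n-m+1}$ with $m(i) := \max(1, i - m + 1)$ and constant weights $z_i := z := pe^{f(q,m)}$; the hypothesis $pe^{f(q,m)} \in (0,1)$ ensures $z$ is a legal choice. For any $j < m(i)$ the gap is $i - j \geq m$, so the events $\{A_j\}_{j < m(i)}$ depend only on the Mallows permutation at positions $\{1,\ldots,i-1\}$, disjoint from the positions $\{i,\ldots,i+m-1\}$ supporting $A_i$. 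Weak dissociation then gives $\P(A_i \mid \bigcap_{j < m(i)} A_j^c) = \P(A_i) = p$, so the LLL hypothesis reduces to $p \leq z(1-z)^{i-m(i)}$. Since $i - m(i) = \min(i-1, m-1)$ and $1-z \in (0,1)$, the tightest instance is $i \geq m$, and it suffices to verify the single inequality $p \leq z(1-z)^{m-1}$.

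The verification of $p \leq z(1-z)^{m-1}$ for $z = pe^{f(q,m)}$ is the heart of the argument. I would take logarithms and apply the standard bound $-\log(1-x) \leq x/(1-x)$ on $(0,1)$ to obtain the sufficient condition $f(q,m) \geq (m-1)z/(1-z)$, and rearrange it, via $z = pe^{f(q,m)}$, into
\[ pe^{f(q,m)} \leq \frac{f(q,m)}{m-1+f(q,m)}. \]
The key algebraic observation is that the closed form for $f(q,m)$ is precisely the smaller (nonnegative) root of
\[ X^2 - (1-p)X + (m-1)p = 0, \]
equivalently the identity $f(q,m)\bigl(1 - f(q,m)\bigr) = p\bigl(m-1+f(q,m)\bigr)$. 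Substituting this identity into the displayed inequality and dividing by $f(q,m) > 0$ collapses it to the univariate bound $(1-f)e^f \leq 1$ on $f \in [0,\infty)$, which holds because equality is attained at $f=0$ and the derivative $-fe^f$ is nonpositive there. A short preliminary check confirms $f(q,m) \in [0,1)$ using $p \in (0,1]$ and the explicit form of the discriminant.

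With the LLL hypothesis verified, Theorem~\ref{LLL} yields
\[ P_n(\sigma,q) = \P\!\left(\bigcap_{i=1}^{n-m+1} A_i^c\right) \geq (1-z)^{n-m+1}, \]
and taking $n$-th roots as $n \to \infty$ gives $\rho(\sigma,q) \geq 1 - pe^{f(q,m)}$, as claimed. The hardest step is recognizing that the ad hoc closed form for $f(q,m)$ is engineered precisely so that the substitution $z = pe^f$ together with the logarithm inequality collapses the LLL condition into the clean bound $(1-f)e^f \leq 1$; the quadratic identity satisfied by $f(q,m)$ is the bridge that makes the resulting sufficient condition tight enough to yield the stated growth-rate lower bound.
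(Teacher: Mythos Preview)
Your proof is correct and follows essentially the same route as the paper's: apply the Lov\'asz local lemma with $m(i)=i-m+1$ and a constant $z=pe^{f(q,m)}$, reduce the hypothesis to $p\le z(1-z)^{m-1}$, and use $\log(1-x)\ge -x/(1-x)$ to obtain a sufficient condition that the quadratic root $f(q,m)$ is designed to meet. The only cosmetic difference is that the paper introduces $e^{-f}>1-f$ as a second relaxation to \emph{derive} the quadratic, whereas you recognize up front that $f(q,m)$ solves $X(1-X)=p(m-1+X)$ and then verify the residual condition $(1-f)e^f\le 1$ directly; these are the same inequality read in two directions.
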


\begin{proof}
In order to apply Theorem~\ref{LLL} in our setting, we first choose $m(i) = i-m+1$, so that the left-hand side~of Equation~\eqref{lower:condition} is 
\[ \P\left(A_i \middle| \bigcap_{j < i-m+1} A_j^c\right) = \P(A_i) =  \frac{q^{\inv(\sigma)}}{[m]_q!}, \]
 as in the proof of Proposition~\ref{prop:generic:upper}. 
Then, by symmetry of the events in index $i$, it suffices to find a single $z \in (0,1)$ such that 
\begin{equation}\label{q:bound} \frac{q^{\inv(\sigma)}}{[m]_q!} \leq z (1-z)^{m-1}. \end{equation}

Following ~\cite[p.~12]{PerarnauArXiv}, we consider $z$ of the form $z =  \frac{q^{\inv(\sigma)}}{[m]_q!} e^{f(q,m)}$, for some positive function $f(q,m)$.  
Then~\eqref{q:bound} is equivalent to 
\begin{align}
\nonumber 1 \leq &\ e^{f(q,m)}\left(1- \frac{q^{\inv(\sigma)}}{[m]_q!}e^{f(q,m)} \right)^{m-1}, \\
\nonumber e^{-\frac{f(q,m)}{m-1}} \leq & \ 1 -  \frac{q^{\inv(\sigma)}}{[m]_q!}  e^{f(q,m)}, \\
\label{before:inequality}-\frac{f(q,m)}{m-1} \leq & \ \log\left(1 -  \frac{q^{\inv(\sigma)}}{[m]_q!}  e^{f(q,m)}\right).
\end{align}
At this point we utilize the inequality $\log(1-x) \geq -\frac{x}{1-x}$, valid for all $x<1$, and so we must assume that $\frac{q^{\inv(\sigma)}}{[m]_q!} e^{f(q,m)} < 1$.  
Equation~\eqref{before:inequality} will then hold assuming the following holds: 
\begin{equation}\label{after:inequality} f(q,m) \geq (m-1)\ \frac{\frac{q^{\inv(\sigma)}}{[m]_q!}  e^{f(q,m)}}{1-\frac{q^{\inv(\sigma)}}{[m]_q!}  e^{f(q,m)}}. \end{equation}
Rearranging, this is equivalent to 
\[ f(q,m) \left(e^{-f(q,m)} - \frac{q^{\inv(\sigma)}}{[m]_q!}\right) \geq (m-1) \frac{q^{\inv(\sigma)}}{[m]_q!}. \]
Note that $e^{-f(q,m)} > 1-f(q,m)$ for $f(q,m) < 1$, and so the above inequality will be satisfied if the following holds:
\begin{equation}\label{quadratic:inequality} f(q,m) \left( (1-f(q,m)) - \frac{q^{\inv(\sigma)}}{[m]_q!} \right) \geq (m-1)\frac{q^{\inv(\sigma)}}{[m]_q!}. \end{equation}
Since this is quadratic in $f(q,m)$, by solving using equality in place of inequality in~\eqref{quadratic:inequality}, we can obtain the region in which the inequality is satisfied. 
The two solutions are 
\[ f(q,m) = \frac{1}{2}\left(1 - \frac{q^{\inv(\sigma)}}{[m]_q!} \pm \sqrt{1-(4m-2)\frac{q^{\inv(\sigma)}}{[m]_q!}+\frac{q^{2\inv(\sigma)}}{[m]_q!^2}}\right). \]
Since the coefficient of $f(q,m)^2$ is negative in~\eqref{quadratic:inequality}, we know that inequality~\eqref{quadratic:inequality} is satisfied for all values in between the solutions. 
Since both solutions are positive, we take the smaller solution to obtain the optimal value 
\[ z = \frac{q^{\inv(\sigma)}}{[m]_q!} \exp\left(\frac{1}{2}\left(1 - \frac{q^{\inv(\sigma)}}{[m]_q!} - \sqrt{1-(4m-2)\frac{q^{\inv(\sigma)}}{[m]_q!}+\frac{q^{2\inv(\sigma)}}{[m]_q!^2}}\right)\right). \]
This completes the proof.
\end{proof}

\begin{remark}
There is a minor technical issue in~\cite[Proof~of~Theorem~5]{Perarnau}, which does not affect the validity of~\cite[Theorem~5]{Perarnau} as stated, since the resulting bounds are stated in terms of asymptotic estimates as $m$ tends to infinity. 
However, it does impact the final form and proof of our Proposition~\ref{lower:bound}, since we are adapting the argument and using it in its preasymptotic form. 

Specifically, the argument used to obtain $z = \frac{1}{m!} e^{(m-1)/m!}$, see~\cite[p.\ 12]{PerarnauArXiv}, is flawed.
It seems that instead of using the inequality $1 - x  > e^{-\frac{x}{1-x}}$, as we have done following Equation~\eqref{before:inequality}, the reverse inequality $1-x < e^{-x}$ was used. 
We have verified numerically that for $3 \leq m \leq 500$,
inequality~\eqref{q:bound} fails for $q=1$ and this value of $z$.
\end{remark}

Rather than rely on the analytic lower bound obtained in Proposition \ref{lower:bound}, we can instead compute the optimal value of $z$ in~\eqref{q:bound} numerically for each $\sigma$ and $q$ to obtain a better estimate, as we have done in the examples of the next section.

\subsection{Examples}

We next compute the explicit bound implied by Suen's inequality in Proposition~\ref{prop:improved} for some specific patterns. 
See Figure~\ref{fig:bounds} for plots of this improved bound, together with the bounds implied by Propositions~\ref{prop:generic:upper} and~\ref{lower:bound}.

\begin{figure}[t!]
	\centering
	\subfloat[$\sigma = 1234$]{\includegraphics[scale=0.18]{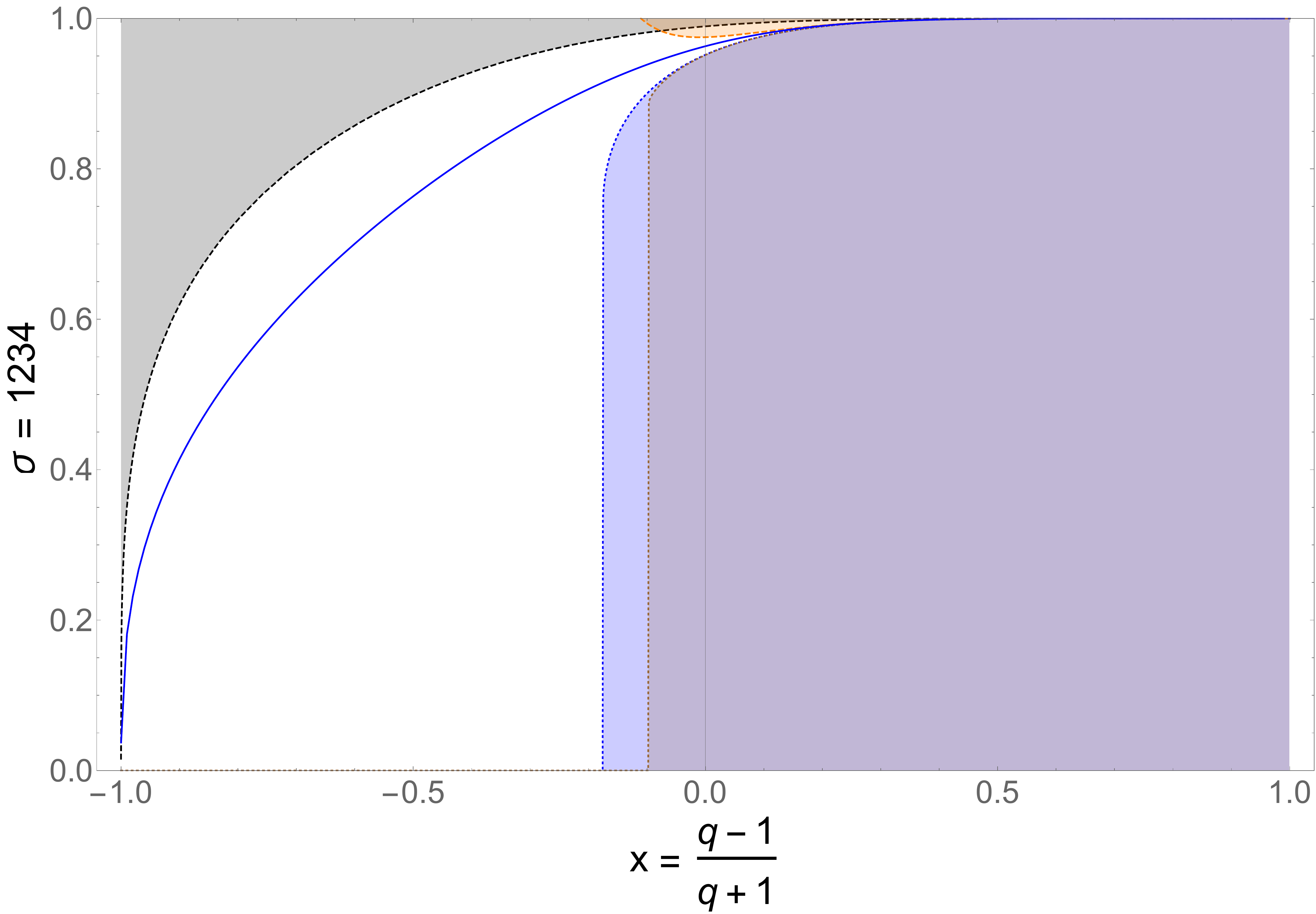}\label{fig:1234:bounds}} \quad 
	\subfloat[$\sigma = 1243$]{\includegraphics[scale=0.18]{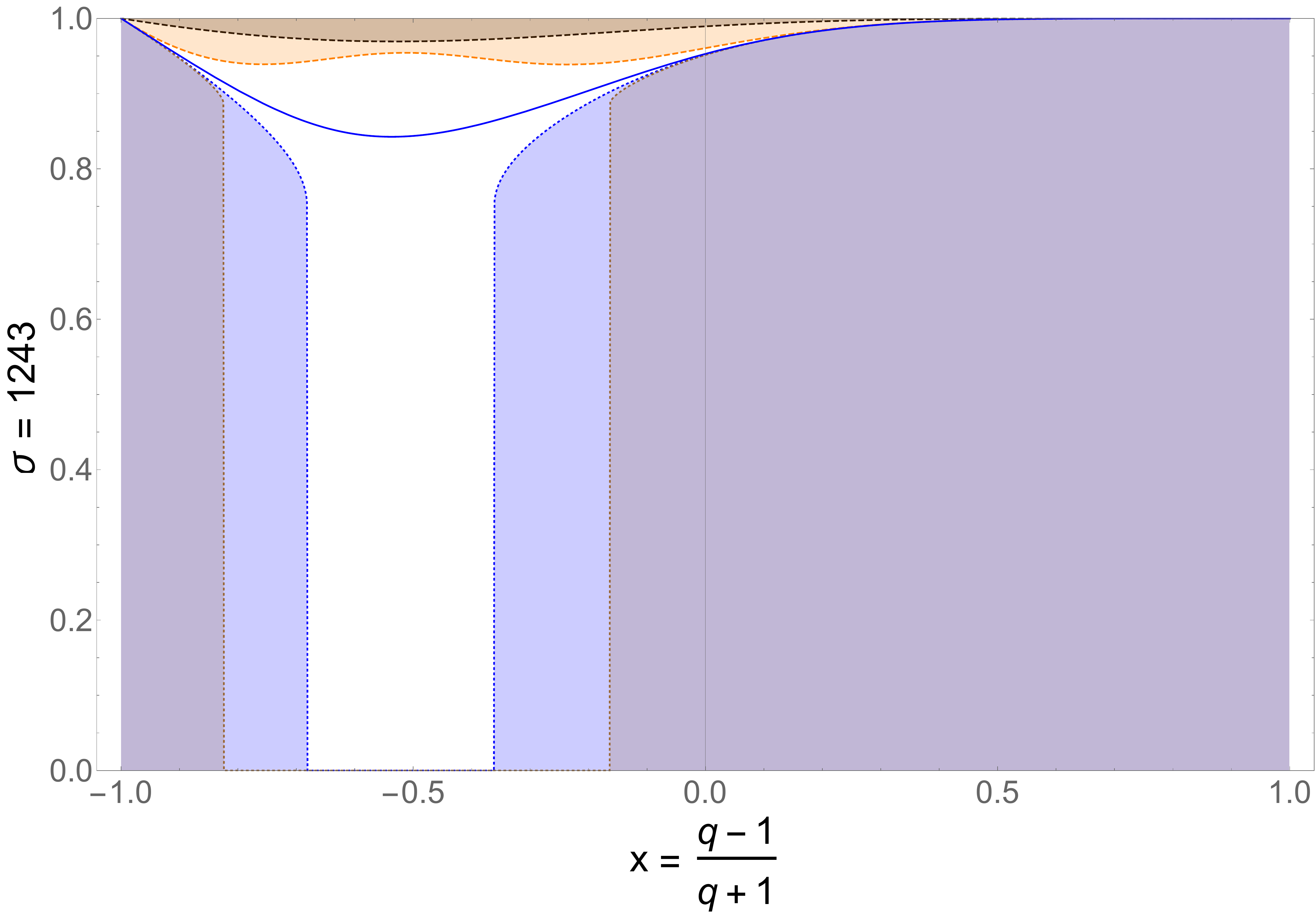}\label{fig:1243:bounds}} \\
	\subfloat[$\sigma = 1342$]{\includegraphics[scale=0.18]{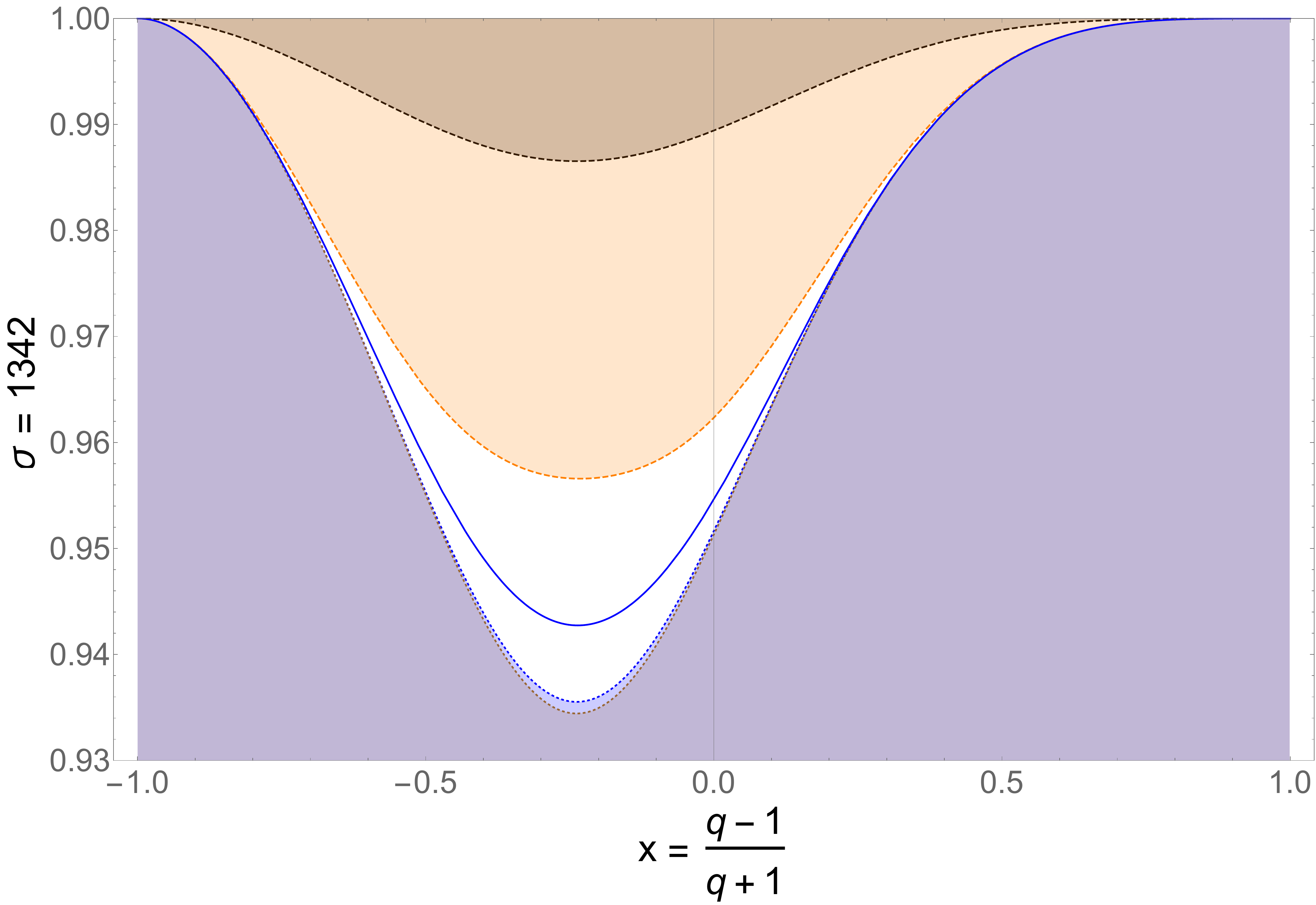}\label{fig:1342:bounds}} \quad 
	\subfloat[$\sigma = 1432$]{\includegraphics[scale=0.18]{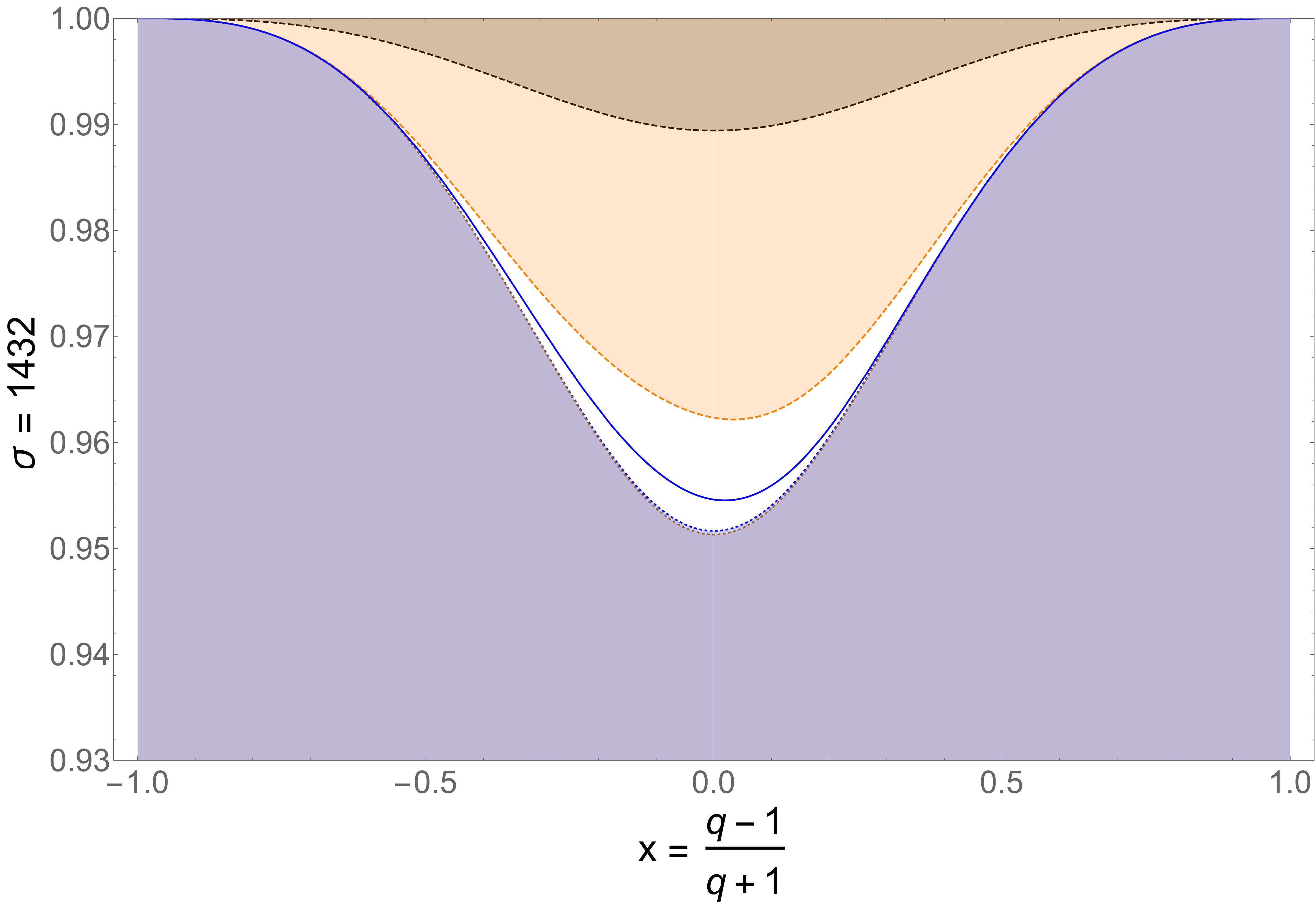}\label{fig:1432:bounds}} \\
	\caption{In each of the graphs, the solid blue curve is a plot of $\rho(\sigma,q)$ with $q = \frac{1+x}{1-x}$, found using the techniques from Sections~\ref{section:monotone} and~\ref{section:132}. 
Looking at $x=0$, the top-most dotted curve with top shading is a universal upper bound valid for all patterns of length 4 with exactly $\inv(\sigma)$ inversions.  The dotted curve underneath with top shading is an upper bound specifically for the indicated pattern $\sigma$ using Suen's inequality.  The bottom-most dotted curve is the lower bound implied by the analytical solution~\eqref{analytical:lower:bound}.  The dotted curve just above the bottom-most dotted line is a universal lower bound valid for all patterns of length 4 with exactly $\inv(\sigma)$ inversions, computed by solving inequality~\eqref{q:bound} numerically.}
\label{fig:bounds}
\end{figure}

\subsubsection{Pattern 1432}\label{sect:1432}
First, we consider the pattern $\sigma = 1432$ of length $4$ with $3$ inversions previously encountered in Section~\ref{section:generalizations}. 
The upper and lower bounds given in Propositions~\ref{prop:generic:upper} and~\ref{lower:bound} apply to all other patterns of length~$4$ with~$3$ inversions, namely, $2341$, $2413$, $3142$, $3214$, $4123$. 
The upper bound from Proposition~\ref{prop:improved}, however, depends on the set of overlaps  $\Ov(1432)$, which we have presented in Table~\ref{Ov:1432} along with the number of inversions for each pattern.
The quantity $\sum_{s=1}^{m-1} T(s,1432,q)$, with $T(s,\sigma,q)$ defined in Equation~\eqref{Ts:def}, is then 
\[ \sum_{s=1}^{m-1} T(s,1432,q) = \frac{q^{12}+q^{11}+2 q^{10}+2 q^9+2 q^8+q^7+q^6}{[7]_q!} =: T_{1432}(q),
\]
allowing us to compute the upper bound for all $q>0$:
\[ \rho(1432,q) \leq \exp \left(-\frac{q^3}{[4]_q!} + T_{1432}(q)\ e^{\frac{12 q^3}{[4]_q!}}\right). \]

\begin{table}[htb]\[ 
\begin{array}{c@{\qquad}c@{\qquad}c@{\qquad}c}
 
\begin{array}{|c|c|} \hline
\tau\in\Ov(1432) & \inv(\tau) \\ \hline
 1  4  3  2  7  6  5 & 6 \\
 1  5  3  2  7  6  4 & 7 \\
 1  5  4  2  7  6  3 & 8 \\
 1  6  3  2  7  5  4 & 8 \\
 1  6  4  2  7  5  3 & 9 \\
 1  6  5  2  7  4  3 & 10 \\
 1  7  3  2  6  5  4 & 9 \\
 1  7  4  2  6  5  3 & 10 \\
 1  7  5  2  6  4  3 & 11 \\
 1  7  6  2  5  4  3 & 12 \\ \hline
 \end{array}
& 
\begin{array}{|c|c|} \hline 
\tau\in\Ov(1342) & \inv(\tau) \\ \hline
 1  3  4  2  6  7  5 & 4\\ 
 1  3  5  2  6  7  4 & 5\\
 1  3  6  2  5  7  4 & 6\\
 1  3  7  2  5  6  4 & 7\\
 1  4  5  2  6  7  3 & 6\\
 1  4  6  2  5  7  3 & 7\\
 1  4  7  2  5  6  3 & 8\\
 1  5  6  2  4  7  3 & 8\\
 1  5  7  2  4  6  3 & 9\\
 1  6  7  2  4  5  3 & 10\\ \hline
\end{array}
&
\begin{array}{|c|c|} \hline 
\tau\in\Ov(2341) &  \inv(\tau) \\ \hline
 3  4  5  2  6  7  1 & 9 \\
 3  4  6  2  5  7  1 & 10\\
 3  4  7  2  5  6  1 & 11\\
 3  5  6  2  4  7  1 & 11\\
 3  5  7  2  4  6  1 & 12\\
 3  6  7  2  4  5  1 & 13\\
 4  5  6  2  3  7  1 & 12\\
 4  5  7  2  3  6  1 & 13\\
 4  6  7  2  3  5  1 & 14\\
 5  6  7  2  3  4  1 & 15\\ \hline
\end{array} 
\end{array} \]

\[
\begin{array}{c@{\qquad}c}
\begin{array}{|c|c|} \hline 
\tau\in\Ov(2413) &  \inv(\tau) \\ \hline
362514 & 9 \\
462513 & 10 \\
2514736 & 7 \\
2614735 & 8 \\
2714635 & 9 \\
3514726 & 8 \\
3524716 & 9 \\
3614725 & 9 \\
3624715 & 10 \\
3714625 & 10 \\
3724615 & 11 \\ \hline
\end{array} 
&
\begin{array}{|c|c|} \hline 
\tau\in\Ov(1243) & \inv(\tau) \\ \hline
1243576 & 2\\ 
1253476 & 3\\
1263475 & 4\\
1273465 & 5\\ \hline
\end{array}
 \end{array}
 \]
 \caption{Permutations in $\Ov(1432)$, $\Ov(1342)$, $\Ov(2341)$, $\Ov(2413)$, and $\Ov(1243)$, with their corresponding number of inversions.}
 \label{Ov:1432}\label{Ov:1342}\label{Ov:1243}\label{Ov:2341}\label{Ov:2413}
 \end{table}

\subsubsection{Pattern 1342}
We next consider 1342, which has length 4 and 2 inversions.  The other patterns of length 4 with 2 inversions are $1423$, $2143$, $2314$, and $3124$. 
As in the previous example, we compute $\Ov(1342)$ along with the number of inversions for each element in the set, summarized in Table~\ref{Ov:1342}.
We have 
\[ \sum_{s=1}^{m-1} T(s,1342,q) = \frac{q^{10}+q^9+2 q^8+2 q^7+2 q^6+q^5+q^4}{[7]_q!} =: T_{1342}(q),\]
and so 
\[ \rho(1432,q) \leq \exp \left(-\frac{q^2}{[4]_q!}+T_{1342}(q)\ e^{\frac{12 q^2}{[4]_q!}}\right).
\]

\subsubsection{Pattern 1243}
The pattern 1243 demonstrates that the condition in Proposition~\ref{lower:bound} is essential, and that sometimes it is best to evaluate the relevant quantities numerically, as the interplay between $q$ and $m$ can be quite complicated when attempting to solve inequality~\eqref{q:bound}, which does not always have a solution. 
In this case, pattern $1243$ has length $4$ with exactly $1$ inversion, which is also the case for patterns $1342$ and $2134$.  
The overlap set is much smaller than in previous examples, consisting of just the four elements given in Table~\ref{Ov:1243}. 
We have 
\[ \sum_{s=1}^{m-1} T(s,1243,q) = \frac{q^5+q^4+q^3+q^2}{[7]_q!} =: T_{1243}(q), \]
and so 
\[ \rho(1243,q) \leq \exp \left(-\frac{q}{[4]_q!} + T_{1243}(q)\ e^{\frac{12 q}{[4]_q!}}\right). \]
In particular, this is the first case covered so far where there is a range of values of $q$ for which inequality~\eqref{q:bound} has no solution.

\subsubsection{Other patterns}

Figure~\ref{fig:2413:bounds} plots the bounds for the pattern $2413$, which has 3 inversions and, therefore, shares the universal upper and lower bounds of Figure~\ref{fig:1432:bounds}, but which differs from pattern 1432 with its own custom improved upper bound.  

Figure \ref{fig:5:bounds} plots the various universal bounds for patterns of length 5 with inversions $1, 2, \ldots, 10$.
The symmetry in these bounds for $(q,\inv(\sigma))\leftrightarrow(1/q,\binom{m}{2}-\inv(\sigma))$ reflects the symmetry in $\rho(\sigma,q)$.

Figure~\ref{fig:10:5:bounds} showcases the versatility and computational efficiency of the generic bounds by plotting the bounds for any pattern of length 10 with exactly 5 inversions. 

Finally, we revisit the monotonically increasing pattern $1234$, which is the only pattern of length 4 with 0 inversions.  
In this case, there is also a large range of values of $q$ for which inequality~\eqref{q:bound} has no solution. 
In addition, Suen's inequality rises above 1 for $q$ small enough, whereas the universal bound is more useful. 
Figure~\ref{fig:1234:bounds} plots these bounds, and Figure~\ref{fig:12m:bounds} plots the bounds for the monotonically increasing patterns $12\ldots m$ for $m=6, 8, 10$.

\begin{figure}[htb]
\centering
\includegraphics[scale=0.2]{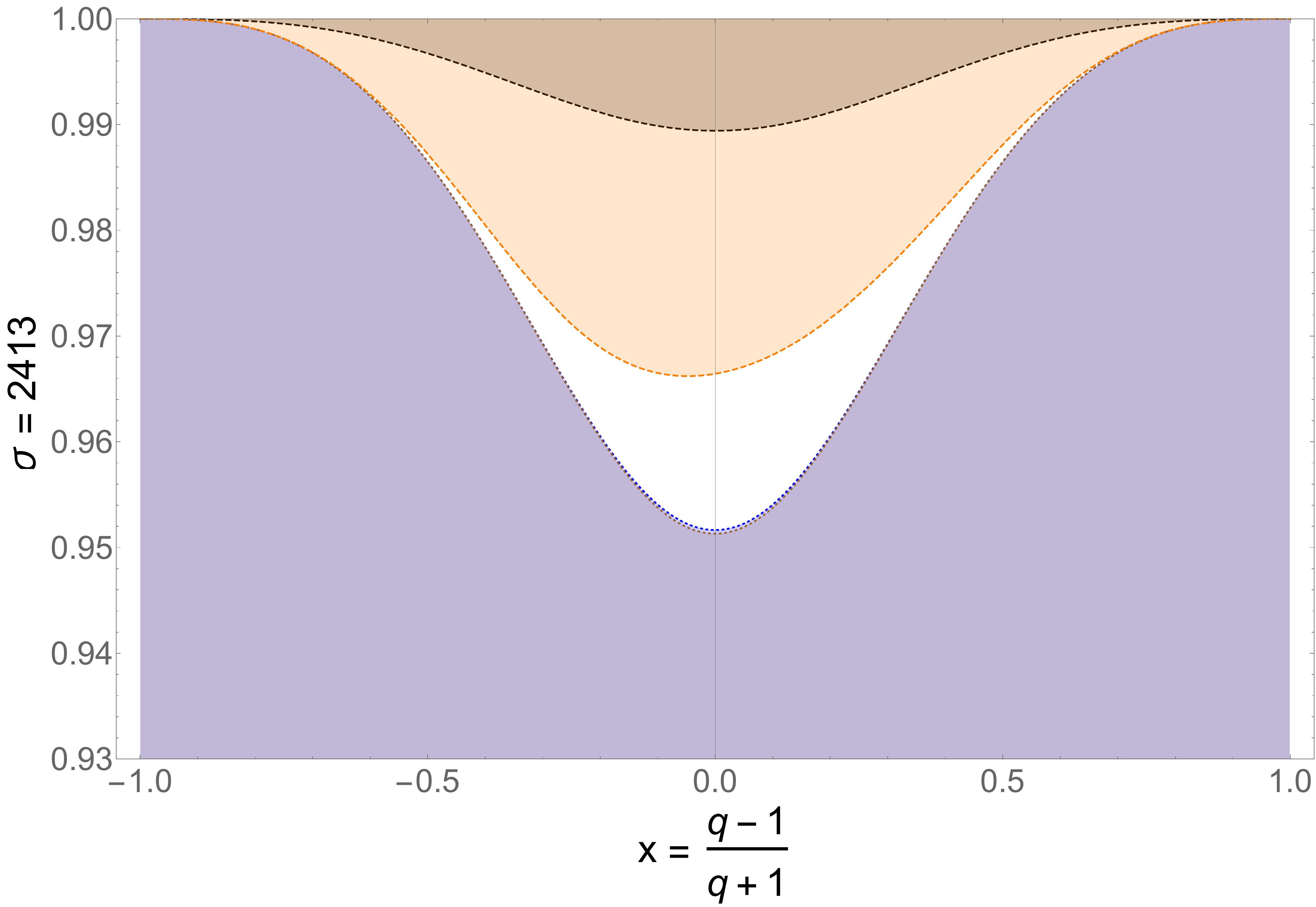}
\caption{Plot of the bounds for $\rho(2413,q)$, with $q = \frac{1+x}{1-x}$, as described in the caption of Figure~\ref{fig:bounds}. 
}
\label{fig:2413:bounds}
\end{figure}

\begin{figure*}[t!]
	\centering
	\subfloat[$\inv(\sigma)=0$]{\includegraphics[scale=0.18]{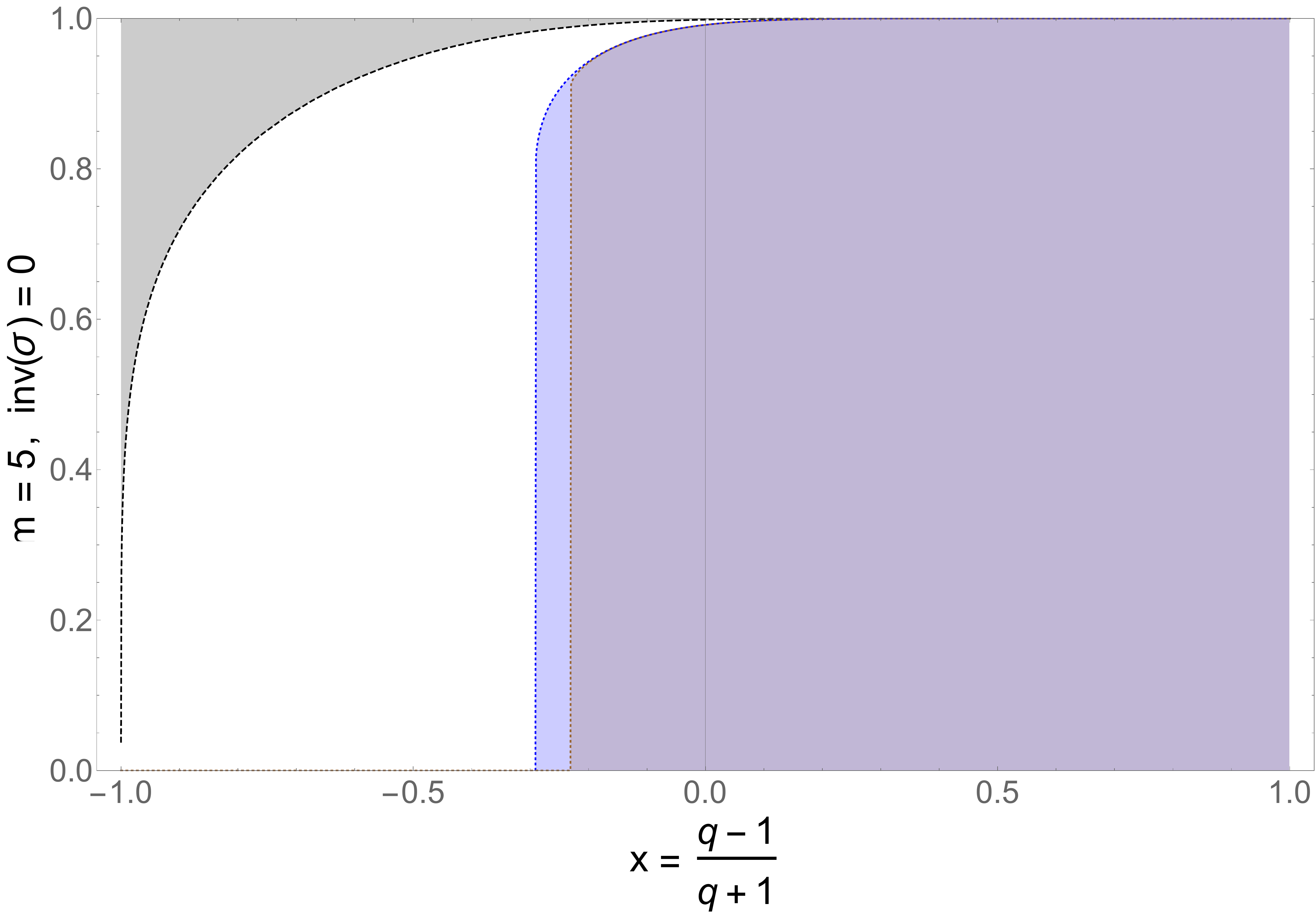}} \quad 
	\subfloat[$\inv(\sigma)=1$]{\includegraphics[scale=0.18]{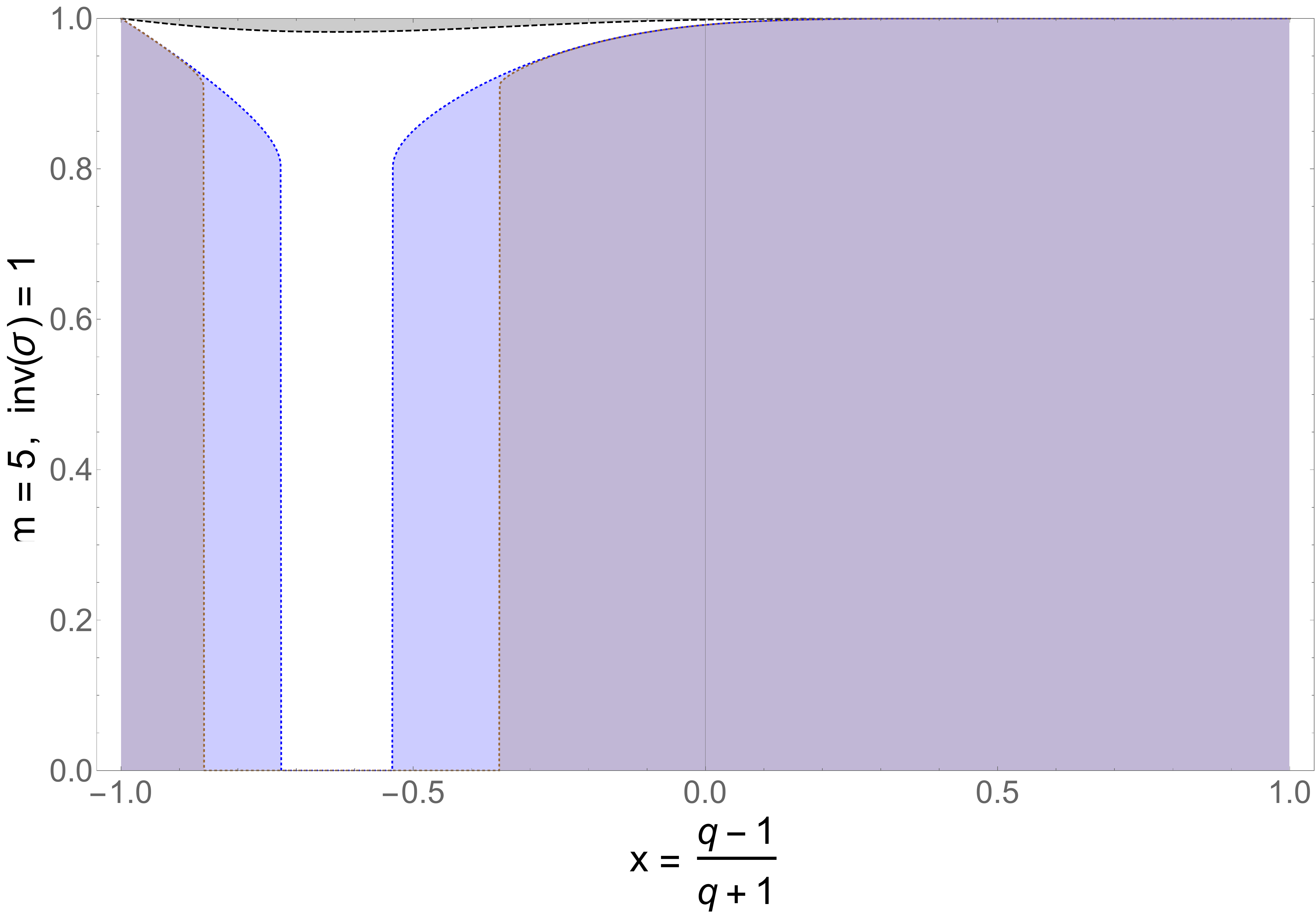}} \\
	\subfloat[$\inv(\sigma)=2$]{\includegraphics[scale=0.18]{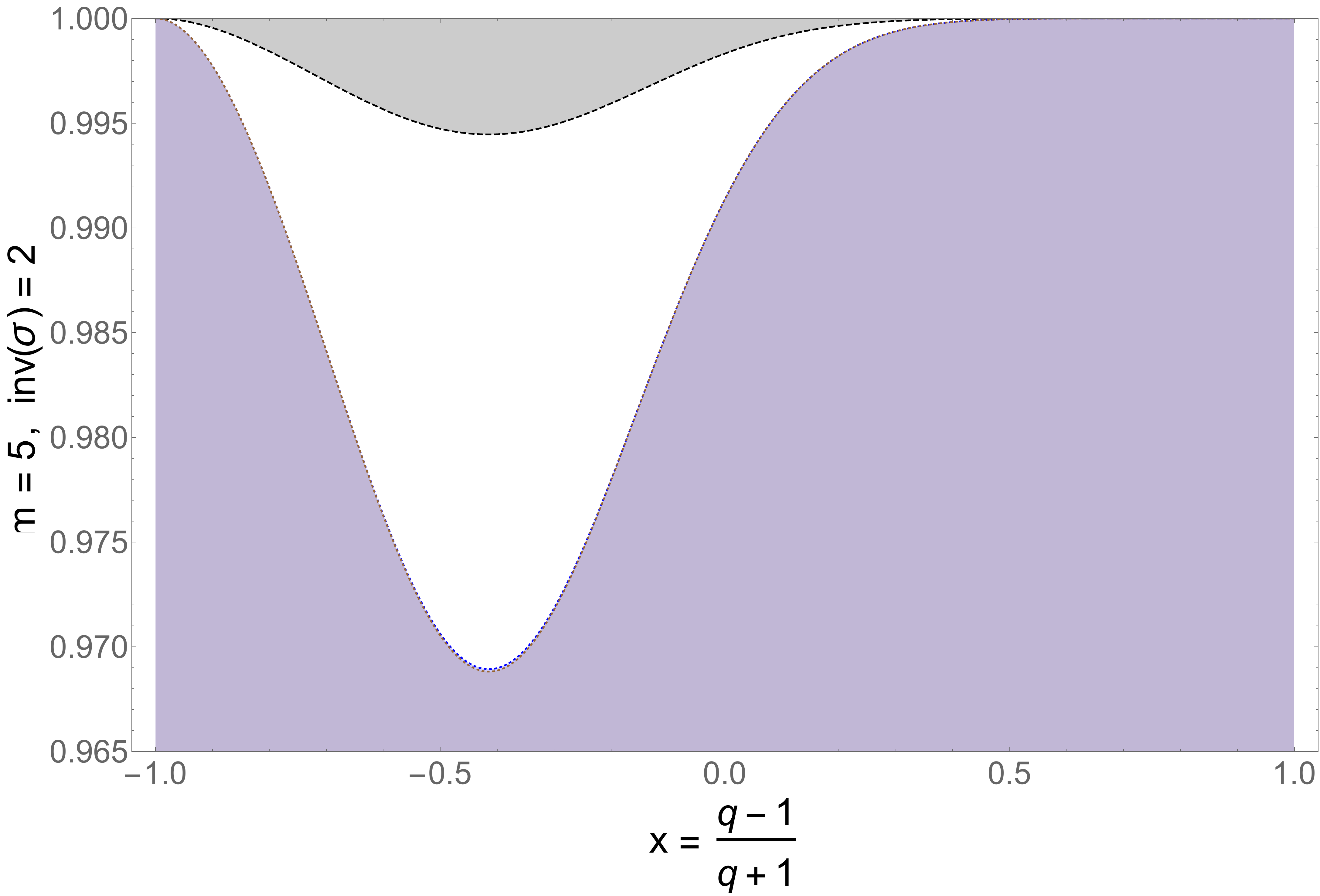}} \quad
	\subfloat[$\inv(\sigma)=3$]{\includegraphics[scale=0.18]{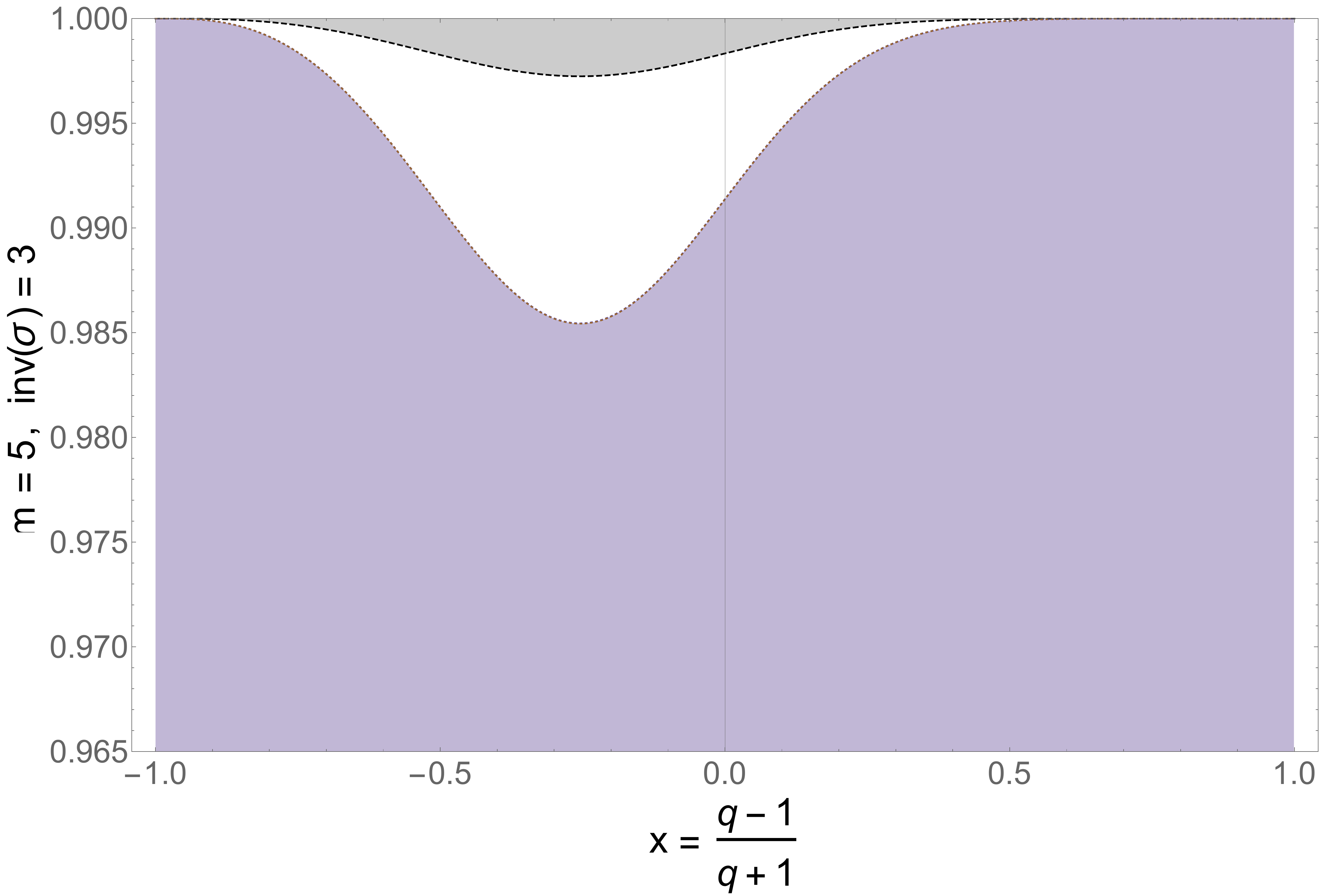}} \\
	\subfloat[$\inv(\sigma)=4$]{\includegraphics[scale=0.18]{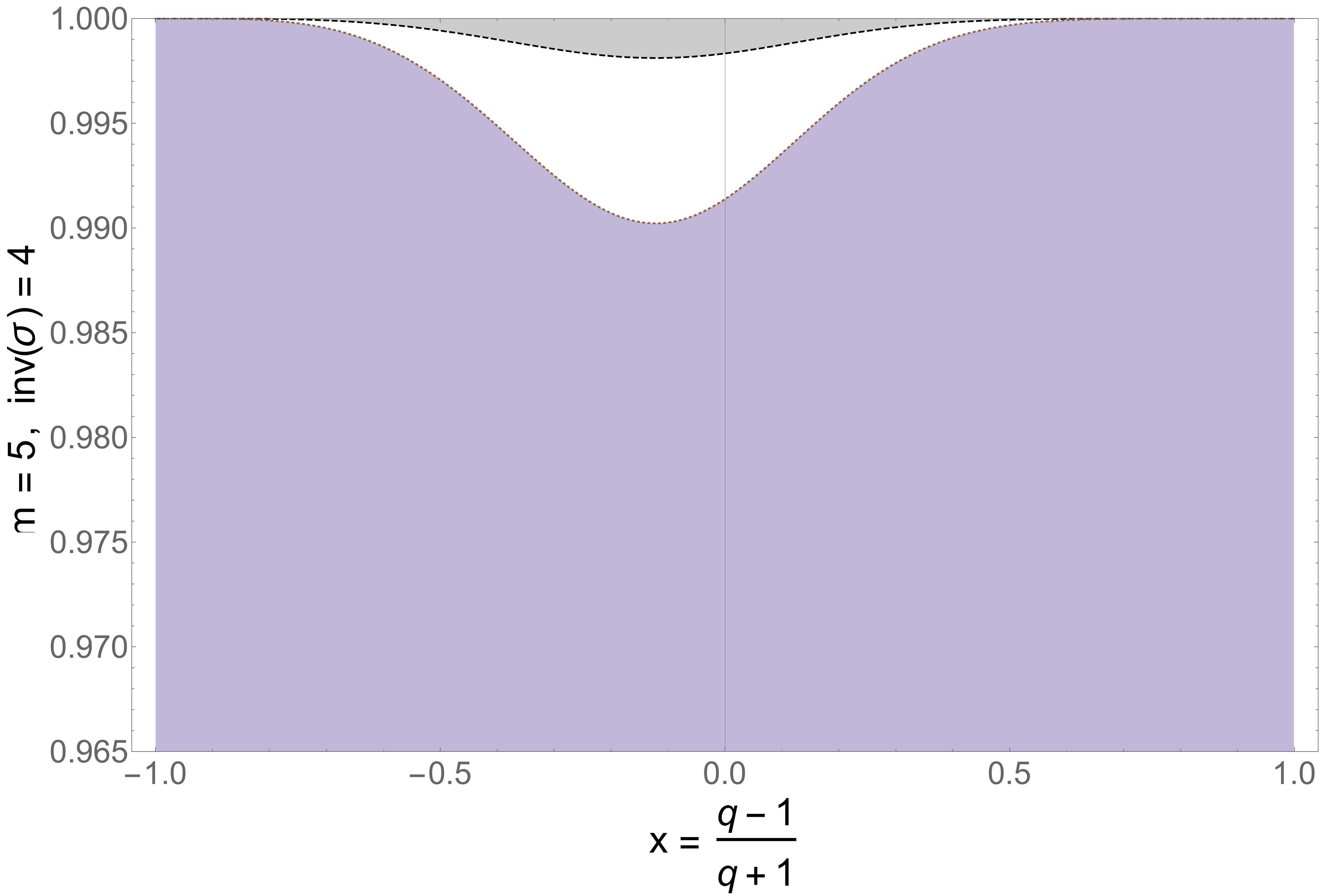}} \quad
	\subfloat[$\inv(\sigma)=5$]{\includegraphics[scale=0.18]{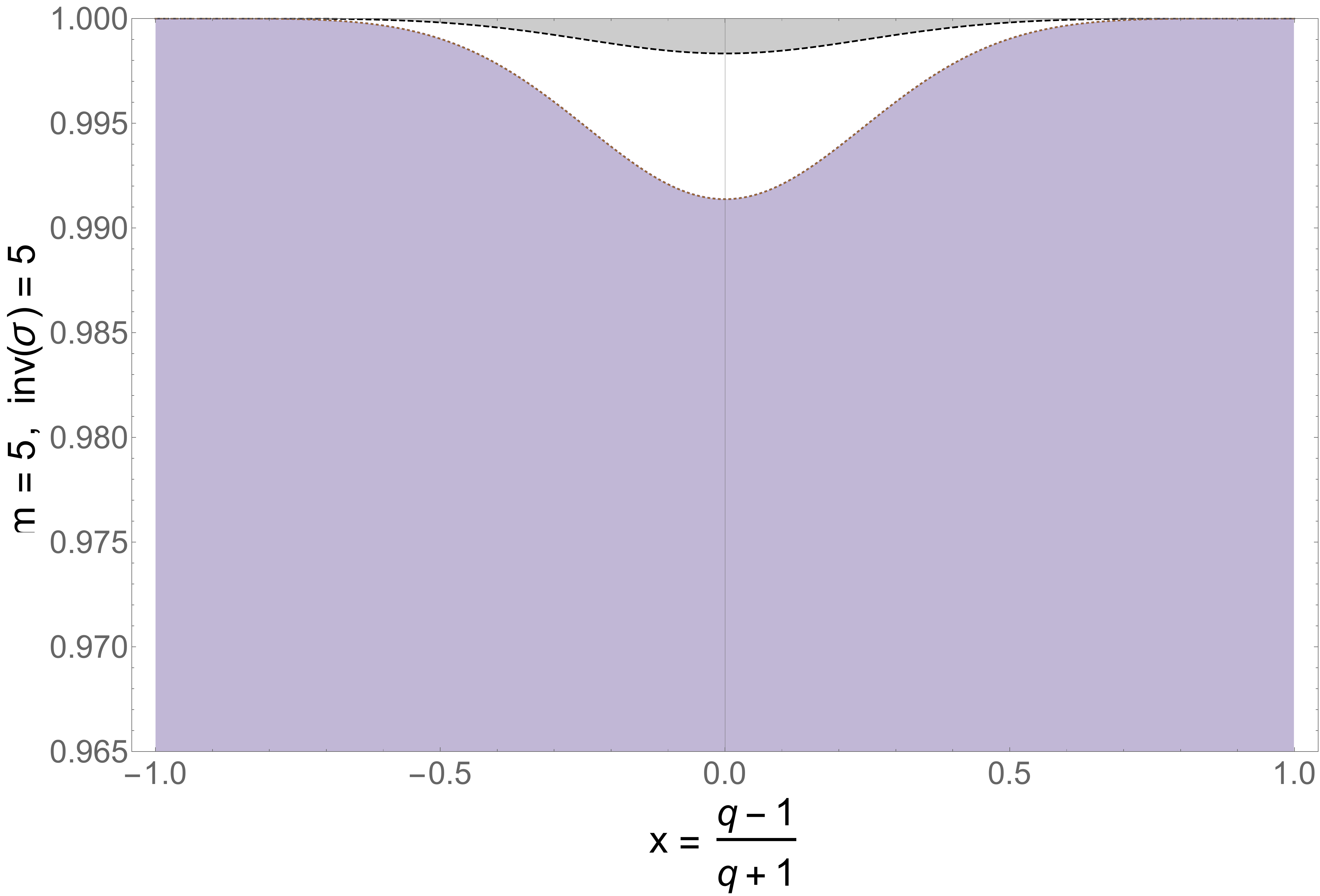}} \\
\caption{Plots of the bounds for $\rho(\sigma,q)$ for patterns~$\sigma$ of length~$m=5$ with $0,1,\ldots,5$ inversions, respectively. 
The plots for patterns with $10, 9, \ldots, 6$ inversions are obtained by reflecting the first five by the line $x=0$.
}
\label{fig:5:bounds}
\end{figure*}

\begin{figure}
\centering\includegraphics[height=6cm]{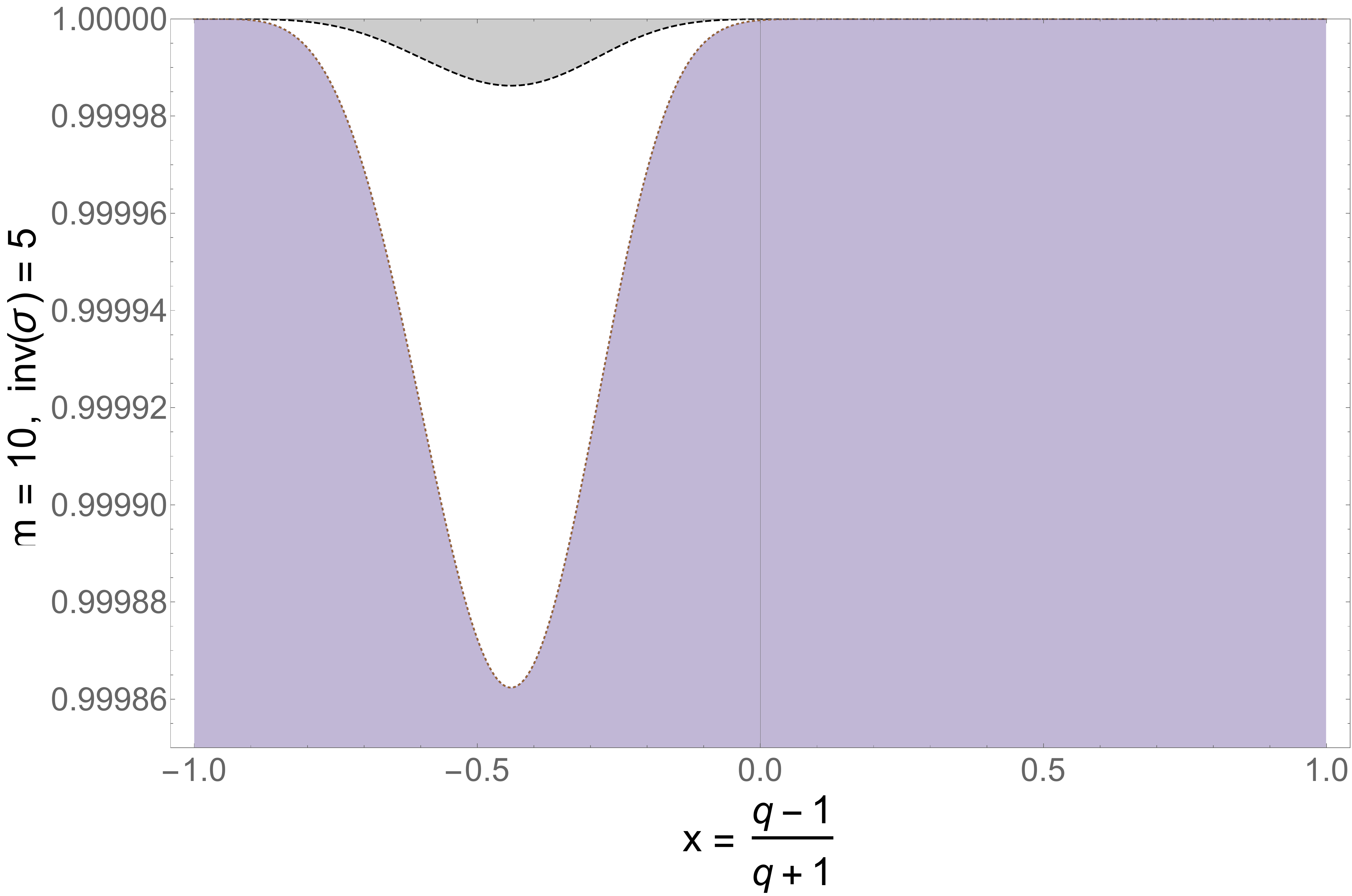}
\caption{Plot of the bounds for $\rho(\sigma, q)$ for any $\sigma \in S_{10}$ with exactly 5 inversions.}
\label{fig:10:5:bounds}
\end{figure}

\begin{figure}[htb]
\centering
\subfloat[$\sigma=12\ldots6$]{\includegraphics[scale=.12]{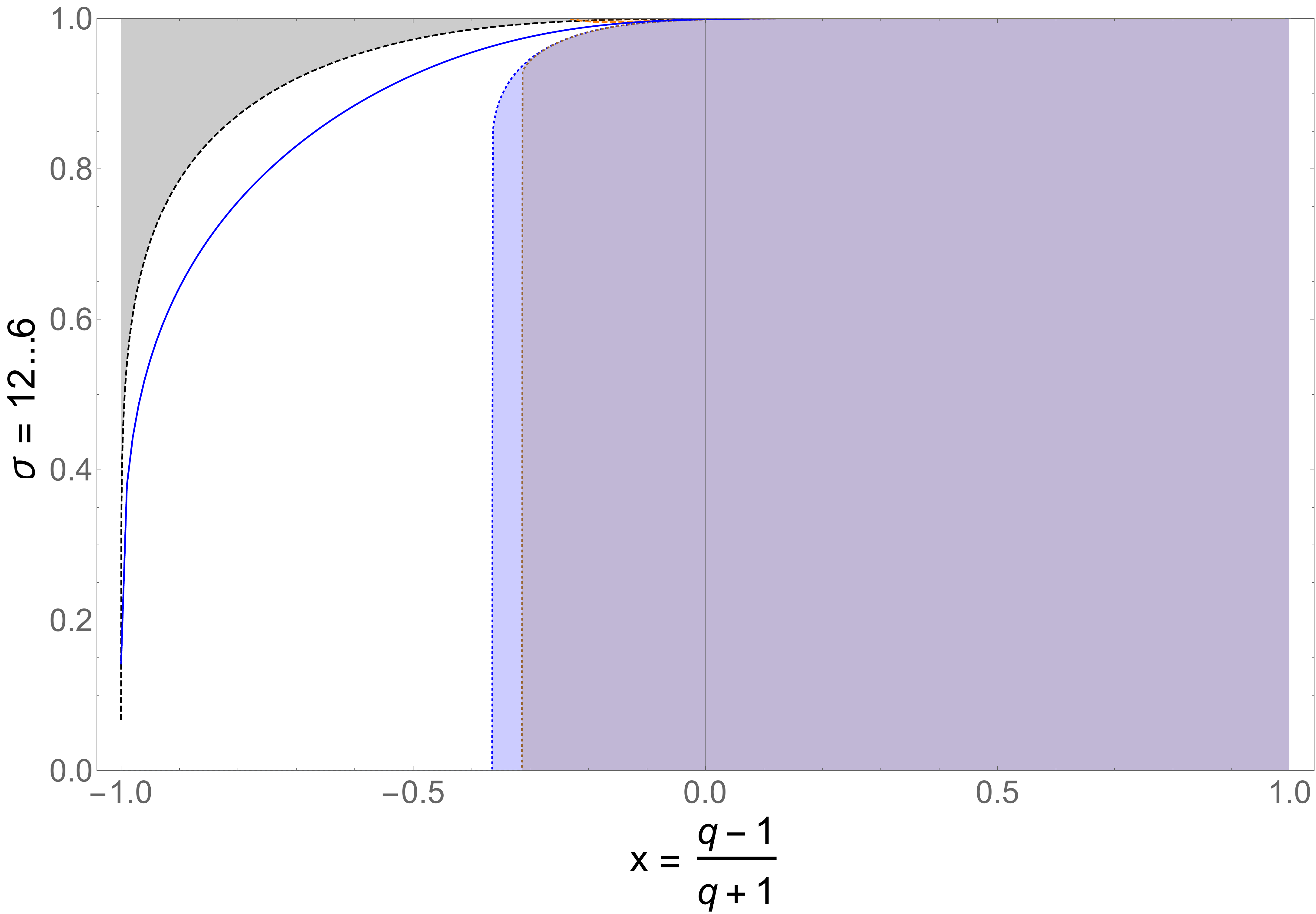}}
\subfloat[$\sigma=12\ldots8$]{\includegraphics[scale=.12]{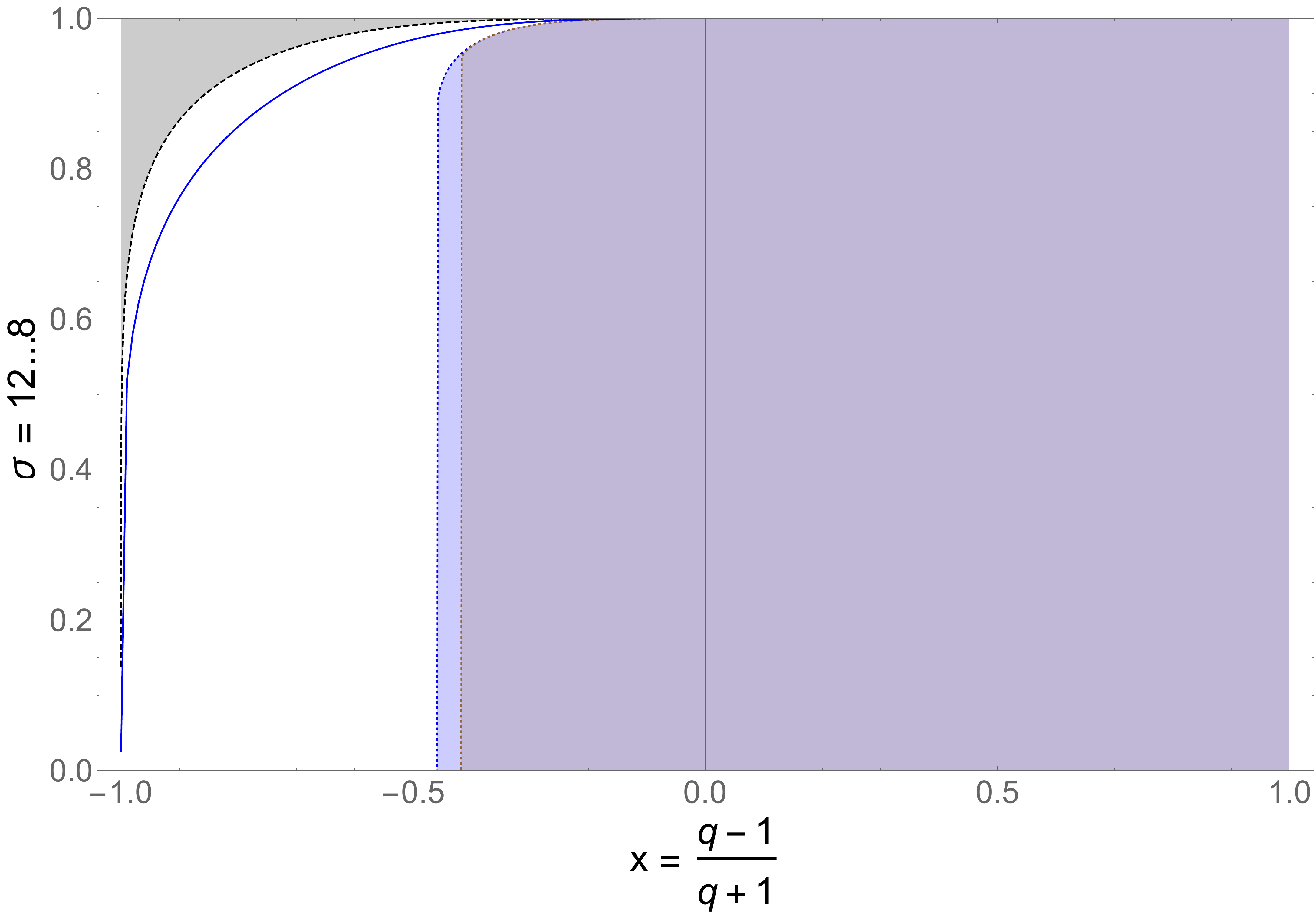}}
\subfloat[$\sigma=12\ldots10$]{\includegraphics[scale=.12]{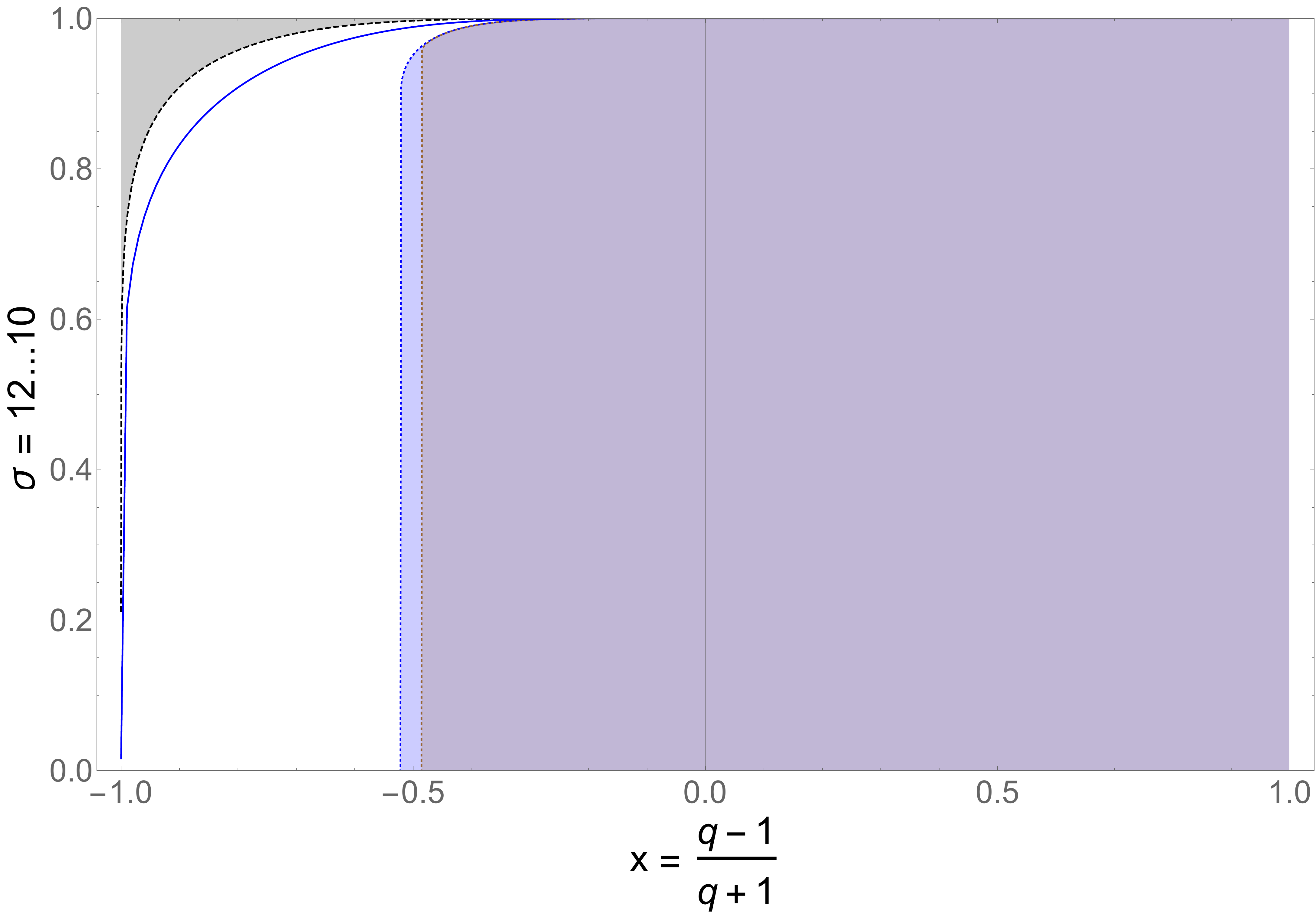}}
\caption{The solid blue curve is a plot of $\rho(12\ldots m,q)$ with $q = \frac{1+x}{1-x}$, for $m=6, 8, 10$.  The other curves are as described in the caption of Figure~\ref{fig:bounds}.}
\label{fig:12m:bounds}
\end{figure}

\section{Distribution of the number of occurrences}
\label{section:Stein}

For any permutation $\pi=\pi_1\dots\pi_n$ and $\sigma\in\S_m$, recall the definition of $x_1,\ldots,x_{n-m+1}$ as the indicator variables defined in \eqref{eq:markings} and $N_n(\sigma,q)=\sum_{j=1}^{n-m+1}x_j$ as the random variable counting the number of occurrences of $\sigma$ in $\pi$ from the Mallows($q$) distribution on $\S_n$.
When concerned only with the binary event of whether $\pi$ avoids $\sigma$, the only relevant information in $x_1,\ldots,x_{n-m+1}$ is whether the event $\{N_n(\sigma,q)=0\}$ occurs, and 
our analysis above focuses on the asymptotic behavior of $P_n(\sigma,q) = \P(N_n(\sigma, q)=0)$. 
By Theorem \ref{thm:rho}, this probability decays exponentially fast in $n$, implying that for all practical purposes $P_n(\sigma,q)$ 
is negligible for even moderately large values of $n$.
As a complement of our prior analysis, we now consider how far a permutation strays from avoiding a given pattern by studying the distribution of the number of occurrences of $\sigma$ in a random permutation from the Mallows($q$) distribution.
Other authors, e.g., \cite{CraneDeSalvo2015,NakamuraJanson2015,Nakamura2013}, have studied similar questions using different techniques and under different assumptions.

Computing the distribution of the number of occurrences, however, is complicated by the dependence among the locations at which a pattern is allowed to occur.
For a simple example, notice that the pattern $213$ can occur at most once in a permutation of length 4, meaning that the variables $x_1,x_2$ indicating occurrence starting in positions 1 and 2, respectively, are negatively correlated.  
On the other hand, the pattern $123$ can occur 0, 1, or 2 times in a permutation of length 4.

Complications due to this dependence, however, are mitigated by the weak dissociation property of the Mallows($q$) distribution, according to which $x_i$ and $x_j$ are independent as long as ${|i-j| > m-1}$.
In probabilistic terms, $(x_1,\dots,x_{n-m+1})$ is an {\em $(m-1)$-dependent} sequence, for which much is already known, including quantitative bounds on convergence rates to a central limit theorem; see~\cite{ChenShao2004}. 
The consecutive homogeneity property of the Mallows($q$) distribution implies that $x_i$ and $x_j$ have the same distribution.  
Both the weak dissociation and consecutive homogeneity properties of the Mallows($q$) distribution play a critical role, implicitly or explicitly, throughout all of our above analysis, including the probabilistic proof that the growth rate always exists (Theorem \ref{thm:rho}) and the bounds obtained in Section \ref{section:bounds}, as when deriving the improved upper bounds in~Proposition~\ref{prop:improved}.

These properties also play a role in the following theorem, which applies to the entire distribution of $N_n(\sigma,q)$ and, therefore, says something about its behavior in high probability regions that are most likely to occur. 

For any $\sigma\in\S_m$, recall the definition of the overlap set $\Ov(\sigma)$ from Definition \ref{overlap}.

\begin{theorem}\label{theorem:Stein}
Fix any $m \geq 3$, $\sigma\in\S_m$, $q>0$, $n \geq 2m-1$, and let $N_n(\sigma,q)$ denote the number of times $\sigma$ occurs in a random permutation of length $n$ from the Mallows$(q)$ distribution. 
For $1\leq s \leq m-1$, define 
\begin{equation}\label{eq:mu} \mu(\sigma, q) := \frac{q^{\inv(\sigma)}}{[m]_q!}; \qquad a_{n}(\sigma,q) := (n-m+1)\, \mu(\sigma,q); \qquad T(s,\sigma,q) := \sum_{\tau \in  {\small\Ov}_s(\sigma)} \frac{q^{\inv(\tau)}}{[2m-s]_q!};\end{equation}
\begin{equation}\label{b:def} b_{n}(\sigma,q)^2 := (n-m+1)\mu(\sigma,q)(1-\mu(\sigma,q)) + 2\sum_{s=1}^{m-1} (n-2m+1+s) \left(T(s,\sigma,q) - \mu(\sigma,q)^2\right);\end{equation}
\[ \theta_n(\sigma,q) := \frac{\mu(\sigma,q)(1-\mu(\sigma,q))^3 + (1-\mu(\sigma,q))\mu(\sigma,q)^3}{b_{n}(\sigma,q)^{3}}. \] 
Then we have 
\begin{equation}\label{bound} \sup_k \left| \P(N_n(\sigma,q) \leq k) - \P\left(Z \leq \frac{k-a_{n}(\sigma,q)}{b_{n}(\sigma,q)}\right) \right| \leq 75(10(m-1)+1)^{2} (n-m+1)\theta_n(\sigma,q),\end{equation}
where $Z$ is a Gaussian random variable with mean 0 and variance 1, that is,
\[\mathbb{P}(Z\leq z)=\int_{-\infty}^z\frac{1}{\sqrt{2\pi}}e^{-x^2/2}dx,\quad -\infty<z<\infty.\]
\end{theorem}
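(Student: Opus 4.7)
Write $N_n(\sigma,q) = \sum_{j=1}^{n-m+1} x_j$ as a sum of the occurrence indicators from~\eqref{eq:markings}. The plan is to observe that this sum is of a Bernoulli $(m-1)$-dependent sequence, compute its mean and variance, then invoke the Berry--Esseen inequality of Chen and Shao~\cite{ChenShao2004} for locally dependent sums.

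First I would check the probabilistic structure of the $x_j$. The weak dissociation property implies that $x_i$ and $x_j$ are independent as soon as $|i-j|>m-1$, so $(x_j)_{j=1}^{n-m+1}$ is $(m-1)$-dependent, and consecutive homogeneity implies that each $x_j$ is $\mathrm{Bernoulli}(\mu(\sigma,q))$; this gives $\e[N_n(\sigma,q)] = a_n(\sigma,q)$ immediately. For the variance, only covariances with $k := j-i \in \{1,\ldots,m-1\}$ are nonzero. For such a pair, the event $\{x_i=x_j=1\}$ says exactly that the standardization of the length-$(m+k)$ window $\pi_i \cdots \pi_{i+k+m-1}$ belongs to $\Ov_{m-k}(\sigma)$, and by consecutive homogeneity this standardization is Mallows$(q)$-distributed on $\S_{m+k}$, so $\P(x_i=x_j=1) = T(m-k,\sigma,q)$. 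Reindexing $s = m-k$ and counting the $n-2m+1+s$ pairs of indices at distance $m-s$ in $\{1,\ldots,n-m+1\}$ produces exactly the formula~\eqref{b:def} for $\mathrm{Var}(N_n(\sigma,q))$.

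Next I would standardize. A direct calculation gives the Bernoulli third absolute central moment $\e|x_j-\mu|^3 = \mu(1-\mu)^3 + (1-\mu)\mu^3$, independent of $j$ by consecutive homogeneity. Setting $W = (N_n(\sigma,q)-a_n(\sigma,q))/b_n(\sigma,q)$ and $\xi_j = (x_j-\mu(\sigma,q))/b_n(\sigma,q)$, I would note that $W = \sum_j \xi_j$ with $\e W = 0$, $\mathrm{Var}(W) = 1$, and $\sum_{j=1}^{n-m+1} \e|\xi_j|^3 = (n-m+1)\theta_n(\sigma,q)$. Chen and Shao's Berry--Esseen inequality for $M$-dependent sums with mean zero and unit variance states $\sup_z|\P(W\le z)-\P(Z\le z)| \le 75(10M+1)^2 \sum_j \e|\xi_j|^3$; applying it with $M=m-1$ produces the bound~\eqref{bound}.

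The step that requires care is identifying $\P(x_i=x_j=1)$ with $T(m-k,\sigma,q)$, which is where both Mallows properties enter: weak dissociation ensures that the short $(m+k)$-letter window can be analyzed on its own, while consecutive homogeneity ensures its standardization is again Mallows$(q)$, so that the joint occurrence event is exactly the Mallows$(q)$-probability of the overlap set $\Ov_{m-k}(\sigma)$. Once this identification is made, everything else, including the Bernoulli third-moment calculation and the invocation of Chen--Shao, is essentially bookkeeping.
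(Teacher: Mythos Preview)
Your proposal is correct and follows essentially the same route as the paper's proof: decompose $N_n(\sigma,q)$ into the $(m-1)$-dependent Bernoulli indicators $x_j$, compute the mean, variance (via the overlap-set identities $\e\,x_i x_{i+m-s}=T(s,\sigma,q)$), and third absolute central moment, then apply Chen--Shao's Berry--Esseen bound for $M$-dependent sums with $M=m-1$. The only cosmetic difference is that the paper records the identification $\P(x_i=x_j=1)=T(s,\sigma,q)$ without singling out which Mallows property does the work, whereas you (correctly) note that consecutive homogeneity alone suffices for that step.
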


\begin{proof}
Fix any $m\geq 3$, $\sigma \in S_m$, $q>0$, and $n \geq 2m-1$.  We use the notation $\mu \equiv \mu(\sigma, q)$, $b_n \equiv b_n(\sigma, q)$, and $\theta_n \equiv \theta_n(\sigma, q)$. 
By linearity of expectation, it immediately follows that $\e N_n(\sigma,q)=(n-m+1)\mu$.
For $b_{n}$, we use the decomposition of the variance of $N_n(\sigma,q)=\sum_{i=1}^{n-m+1}x_i$ by
\[ \mbox{Var}(N_n(\sigma,q)) = \sum_{i=1}^{n-m+1} \mbox{Var}(x_i) + 2\sum_{i<j} \mbox{Cov}(x_i, x_j),\]
where $\mbox{Cov}(x_i,x_j)=\e(x_ix_j)-\e(x_i)\e(x_j)$ is the covariance of $x_i$ and $x_j$.
By the weak dissociation property of the Mallows($q$) distribution, $\mbox{Cov}(x_i, x_j) = 0$ whenever $|i-j|> m-1$, and so we need only consider pairs $i$ and $j$ for which $1 \leq |i-j| = s\leq m-1$.  
Using Definition~\ref{overlap}, we have 
\[ \e\, x_i\, x_{i+m-s} = \sum_{\tau \in \Ov_s(\sigma)} \frac{q^{\inv(\tau)}}{[2m-s]_q!}, \qquad 1 \leq s \leq m-1, \quad 1 \leq i \leq n- 2m +1 + s. \]
Since $x_i$ is an indicator random variable, we also have $\mbox{Var}(x_i) = \mu(1-\mu)$ for all $i=1,2,\ldots, n-m+1$ by the consecutive homogeneity property of the Mallows distribution. 
Whence,
\begin{align*}
b_n^2 = \mbox{Var}\left(\sum_{i=1}^{n-m+1} x_i\right) & = \sum_{i=1}^{n-m+1} \mbox{Var}(x_i) + 2\sum_{i<j} \mbox{Cov}(x_i, x_j). \\
   & = \sum_{i=1}^{n-m+1} \mu(1-\mu) + 2 \sum_{s=1}^{m-1} \sum_{i=1}^{n-2m+1+s} \left(\e \, x_i \, x_{i+m-s} - \mu^2\right) \\
   & = (n-m+1)\mu(1-\mu) + 2\sum_{s=1}^{m-1} (n-2m+1+s) \left(T(s,\sigma,q) - \mu^2\right). 
\end{align*}
where we used~\eqref{eq:sumexx} in the last equality.
Next, we define the random variables
\[ \xi_i := \frac{x_i - \mu}{b_n}, \qquad i=1,2,\ldots,n-m+1, \]
and
\[  W := \sum_{i=1}^{n-m+1} \xi_i. \]
We have $\e\, \xi_i = 0$ for $i=1,2,\ldots, n-m+1$, and by our choice of $b_{n}$ above we have $\mbox{Var}(W) = 1$. 
We also have, since $x_i$ is an indicator random variable, that $0 \leq \mu \leq 1$, and so 
\[ \theta_n  = \e |\xi_i |^3 = \frac{ \e |x_i - \mu|^3}{b_n^3} = \frac{ \mu(1-\mu)^3 + \mu^3(1-\mu)}{b_n^3}, \qquad \mbox{for all $i=1,2,\ldots, n-m+1$}. \]

By \cite[Theorem~2.6]{ChenShao2004}, since the collection $\{\xi_i\}_{1 \leq i \leq n}$ is $(m-1)$-dependent, it satisfies $\e\, \xi_i = 0$ for all $i=1,2,\ldots,n$, and since $\mbox{Var}(W) = 1$, we obtain the following bound for $W$: 
\begin{equation*}
\sup_x \left| \P(W \leq x) - \P\left(Z \leq x\right) \right| \leq 75(10(m-1)+1)^{2} \sum_{i=1}^{n-m+1} \e |\xi_i|^3. 
\end{equation*}
Inequality~\eqref{bound} now follows since $\theta_n = \e |\xi_i|^3$ for all $i=1,2,\ldots, n-m+1$.
\end{proof}

Theorem~\ref{theorem:Stein} is stated in terms of an inequality that is valid for all finite values of parameters with explicitly defined constants, rather than as a limit theorem. 
A distinct advantage of Stein's method is that it yields explicit preasymptotic bounds, from which quantitative statements can be derived for all finite values of the parameters.
Theorem~\ref{theorem:Stein} also yields many corollaries. 
We assume the notation from Theorem~\ref{theorem:Stein} in the rest of this section.

\begin{proposition}
Fix any $m \geq 3$, $\sigma \in S_m$, and $q>0$.  
Assume 
\[ 2 \sum_{s=1}^{m-1}T(s,\sigma,q) - (2m-1)\mu(\sigma,q)^2 +\mu(\sigma,q) > 0, \]
i.e., it is \emph{strictly positive}. 
Then $\frac{N_n(\sigma, q)-a_{n}(\sigma, q)}{b_{n}(\sigma,q)}$ converges in distribution to a normal random variable with mean 0 and variance 1 as $n$ tends to infinity, with rate $O\left(n^{-1/2}\right)$. 
\end{proposition}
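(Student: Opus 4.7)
The plan is to apply Theorem~\ref{theorem:Stein} directly and track how its right-hand side behaves in $n$ under the stated hypothesis. The bound in~\eqref{bound} is $75(10(m-1)+1)^2(n-m+1)\theta_n(\sigma,q)$, so it suffices to control $\theta_n(\sigma,q)=[\mu(1-\mu)^3+(1-\mu)\mu^3]/b_n(\sigma,q)^3$ as a function of $n$. Since $m$, $q$, and $\sigma$ are fixed, the numerator is a constant, so everything reduces to showing that $b_n(\sigma,q)$ grows like $n^{1/2}$.

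First I would expand the expression~\eqref{b:def} for $b_n(\sigma,q)^2$ and identify it as a linear polynomial in $n$. Grouping the coefficient of $n$, one obtains
\[
\frac{b_n(\sigma,q)^2}{n}\longrightarrow \mu(1-\mu)+2\sum_{s=1}^{m-1}\left(T(s,\sigma,q)-\mu^2\right)=\mu+2\sum_{s=1}^{m-1}T(s,\sigma,q)-(2m-1)\mu^2,
\]
where I use $\mu=\mu(\sigma,q)$ throughout. The hypothesis of the proposition is precisely that this limiting constant is strictly positive. Consequently $b_n(\sigma,q)^2=\Theta(n)$, so $b_n(\sigma,q)=\Theta(n^{1/2})$ and $\theta_n(\sigma,q)=\Theta(n^{-3/2})$.

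Plugging these estimates back into~\eqref{bound} gives
\[
\sup_k\left|\mathbb{P}(N_n(\sigma,q)\leq k)-\mathbb{P}\!\left(Z\leq\tfrac{k-a_n(\sigma,q)}{b_n(\sigma,q)}\right)\right|=O\!\left(n\cdot n^{-3/2}\right)=O\!\left(n^{-1/2}\right),
\]
with the implied constant depending only on $m$, $\sigma$, and $q$. Since $N_n(\sigma,q)$ is integer-valued while the Gaussian CDF is continuous, the Kolmogorov distance between the laws of $(N_n(\sigma,q)-a_n(\sigma,q))/b_n(\sigma,q)$ and $Z$ is bounded by the same expression (up to the negligible atom size $1/b_n(\sigma,q)=O(n^{-1/2})$), which yields both convergence in distribution and the stated rate.

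The only substantive step is recognizing the equivalence between the positivity hypothesis and $\liminf_{n\to\infty} b_n(\sigma,q)^2/n>0$; everything else is a routine manipulation of the explicit bound already established in Theorem~\ref{theorem:Stein}. The nontrivial content therefore lies not in the present proposition but in verifying the positivity hypothesis for specific patterns $\sigma$ and values of $q$, which is a separate combinatorial task involving the overlap polynomial $\sum_{s=1}^{m-1}T(s,\sigma,q)$.
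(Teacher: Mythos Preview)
Your proof is correct and follows essentially the same approach as the paper: both identify $b_n(\sigma,q)^2$ as a linear function of $n$ whose leading coefficient equals exactly the quantity assumed positive in the hypothesis, deduce $b_n(\sigma,q)=\Theta(n^{1/2})$ and hence $\theta_n(\sigma,q)=\Theta(n^{-3/2})$, and plug this into the Stein bound~\eqref{bound} to obtain the $O(n^{-1/2})$ rate. Your additional remark about the atom size and Kolmogorov distance is a harmless elaboration; the paper's proof omits it since the bound in Theorem~\ref{theorem:Stein} already controls the full Kolmogorov distance of the normalized variable.
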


\begin{proof}
For fixed $m \geq 3$, $\sigma \in S_m$, and $q>0$, we have
\[ a_n(\sigma, q) = n\, \mu(\sigma,q) + \phi_1(\sigma, q), \]
where $\phi_1(\sigma, q) := -(m-1)\mu(\sigma,q)$ is a function which does not depend on $n$. 
Similarly, we have 
\[ b_n(\sigma, q)^2 = n \, \phi_2(\sigma,q)  + \phi_3(\sigma,q), \]
where $\phi_2(\sigma,q) := 2 \sum_{s=1}^{m-1}T(s,\sigma,q) - (2m-1)\mu^2 +\mu$ and $\phi_3(\sigma,q)$ do not depend on $n$. 
Note that $b_n(\sigma, q)^2$ equals the variance of a sum of random variables, which must always nonnegative.
Thus, by assuming that $\phi_2(\sigma, q) > 0,$ we have 
\begin{equation}\label{ab} a_n(\sigma, q) = O(n) \quad\text{and}\quad b_n(\sigma, q) = O(\sqrt{n})\quad\text{as }n\to\infty, \end{equation}
and also 
\[ \theta_n(\sigma, q) = O\left(\frac{1}{b_n(\sigma,q)^3}\right)\quad\text{as }n\to\infty. \]
Thus, asymptotically as $n$ tends to infinity, the right-hand side~of Equation~\eqref{bound} is $O(n / b_n(\sigma,q)^3)$, which by~\eqref{ab} is also $O(n^{-1/2})$.
\end{proof}

\begin{corollary}
Fix any $m \geq 3$, $\sigma \in S_m$, and take $q=1$.  
Assume 
\[ 2 \sum_{s=1}^{m-1}T(s,\sigma,1) - (2m-1)\mu(\sigma,1)^2 +\mu(\sigma,1) > 0. \]
Let $p_n(\sigma, k)$ denote the number of permutations of $n$ with exactly $k$ occurrences of $\sigma$. 
For any $M \geq 0$, define $x_M := \frac{M-a_{n}(\sigma,1)}{b_{n}(\sigma,1)}$. 
Then \emph{for each} $n \geq 2m-1$ and $M \geq 0$, we have 
\[ \left| \frac{\sum_{k=0}^M p_n(\sigma,k)}{n!} - \int_{-\infty}^{x_M} \frac{e^{-y^2/2}}{\sqrt{2\pi}}\, dy\right| \leq 75(10m+1)^{2} (n-m+1)\theta_n(\sigma,1). \]
In particular, as $M$ and $n$ tend to infinity, suppose there is an $x \in \mathbb{R}$ such that also $x_M \to x$, then 
\[ \frac{\sum_{k=0}^M p_n(\sigma,k)}{n!} \to \int_{-\infty}^{x} \frac{e^{-y^2/2}}{\sqrt{2\pi}}\, dy. \]
\end{corollary}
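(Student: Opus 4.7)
The plan is to derive this corollary as a direct specialization of Theorem~\ref{theorem:Stein} at $q=1$, together with the elementary fact that the Mallows$(1)$ distribution is uniform on $\S_n$.

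First, I would fix $q=1$ and observe that every $\pi\in\S_n$ then receives probability $1/n!$ under the Mallows measure. Consequently, the random variable $N_n(\sigma,1)$ counting the consecutive occurrences of $\sigma$ in a Mallows$(1)$ permutation satisfies
\[ \P(N_n(\sigma,1) \leq M) = \frac{|\{\pi \in \S_n : c_\sigma(\pi) \leq M\}|}{n!} = \frac{\sum_{k=0}^M p_n(\sigma,k)}{n!}, \]
while $\int_{-\infty}^{x_M} e^{-y^2/2}/\sqrt{2\pi}\,dy = \P(Z \leq x_M)$ for a standard normal $Z$. The quantity to be bounded is therefore precisely the left-hand side of~\eqref{bound} taken at $k=M$ and $q=1$.

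Second, I would invoke Theorem~\ref{theorem:Stein} verbatim at these parameters to obtain the same inequality with constant $75(10(m-1)+1)^2$ in place of $75(10m+1)^2$. Since $10(m-1)+1 = 10m-9 \leq 10m+1$ for all $m \geq 3$, squaring preserves the inequality, and the weaker constant in the corollary's statement follows at no cost. For the asymptotic assertion, I would appeal to the preceding proposition: under the hypothesized strict positivity of $2\sum_{s=1}^{m-1}T(s,\sigma,1) - (2m-1)\mu(\sigma,1)^2 + \mu(\sigma,1)$, one has $b_n(\sigma,1) = \Theta(\sqrt{n})$, hence $\theta_n(\sigma,1) = O(n^{-3/2})$, so the right-hand side of~\eqref{bound} is $O(n^{-1/2})$ and vanishes as $n\to\infty$. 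Combined with continuity of the standard normal CDF at $x$, which forces $\int_{-\infty}^{x_M} e^{-y^2/2}/\sqrt{2\pi}\,dy \to \int_{-\infty}^x e^{-y^2/2}/\sqrt{2\pi}\,dy$ whenever $x_M\to x$, a routine triangle-inequality estimate yields the convergence claim.

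There is no substantial obstacle here: the corollary is essentially a cosmetic rewriting of Theorem~\ref{theorem:Stein}, exploiting that Mallows$(1)$ is uniform so that probabilities convert to normalized counts. The only points requiring explicit mention are the trivial slackening of the constant from $(10(m-1)+1)^2$ to $(10m+1)^2$ and the combination of the vanishing error with the continuity of the Gaussian CDF in the final limit; both are routine.
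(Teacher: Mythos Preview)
Your proposal is correct and matches the paper's intent: the corollary is stated without proof in the paper, as it follows immediately from Theorem~\ref{theorem:Stein} specialized to $q=1$ (where Mallows$(1)$ is uniform, converting probabilities to normalized counts) together with the preceding proposition for the asymptotic statement. Your observation about the harmless slackening of the constant from $(10(m-1)+1)^2$ to $(10m+1)^2$ and the use of continuity of the Gaussian CDF for the limit are exactly the details one would fill in.
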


\begin{figure}
\centering
\includegraphics[scale=0.12]{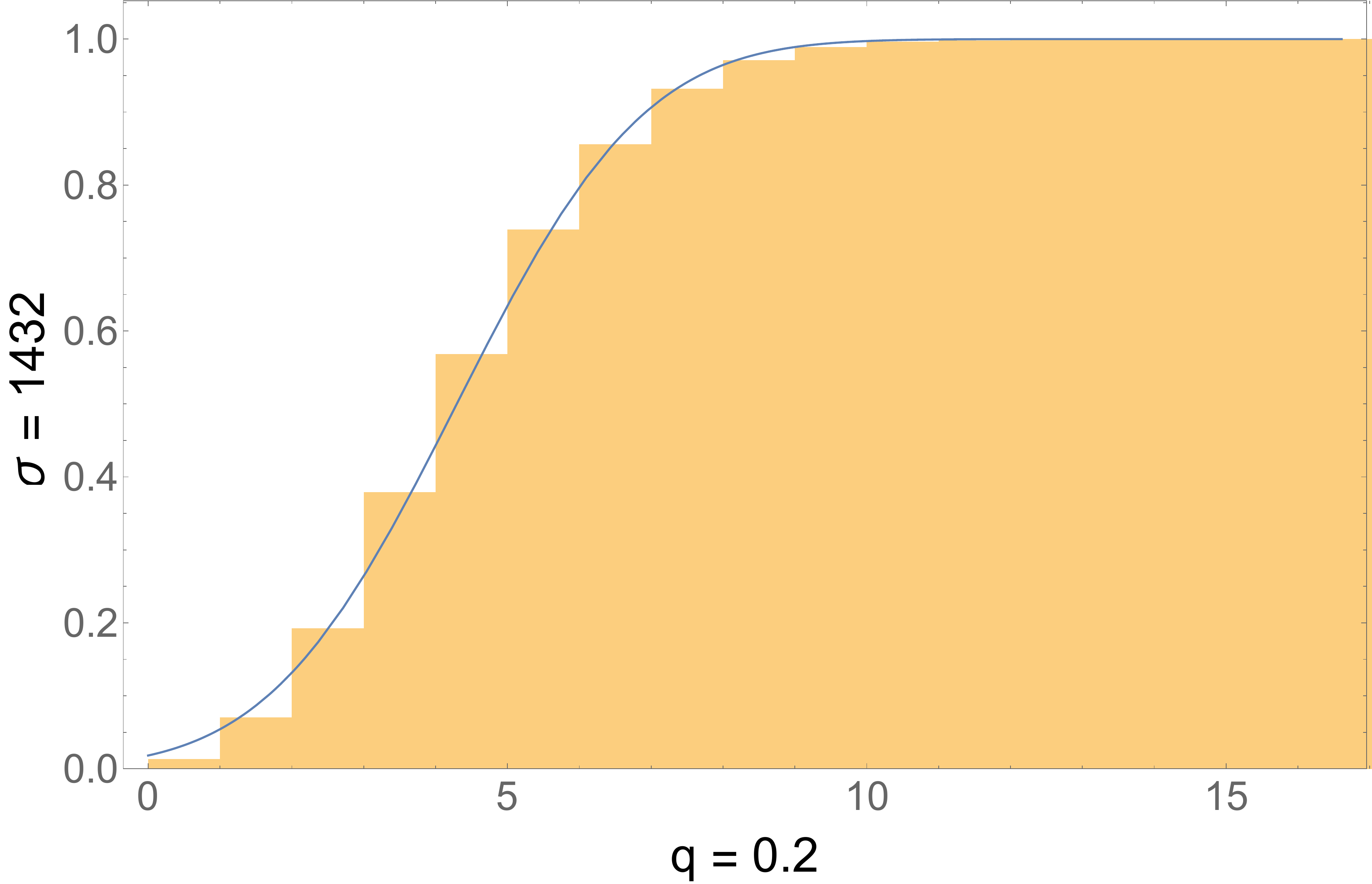} \qquad \includegraphics[scale=0.12]{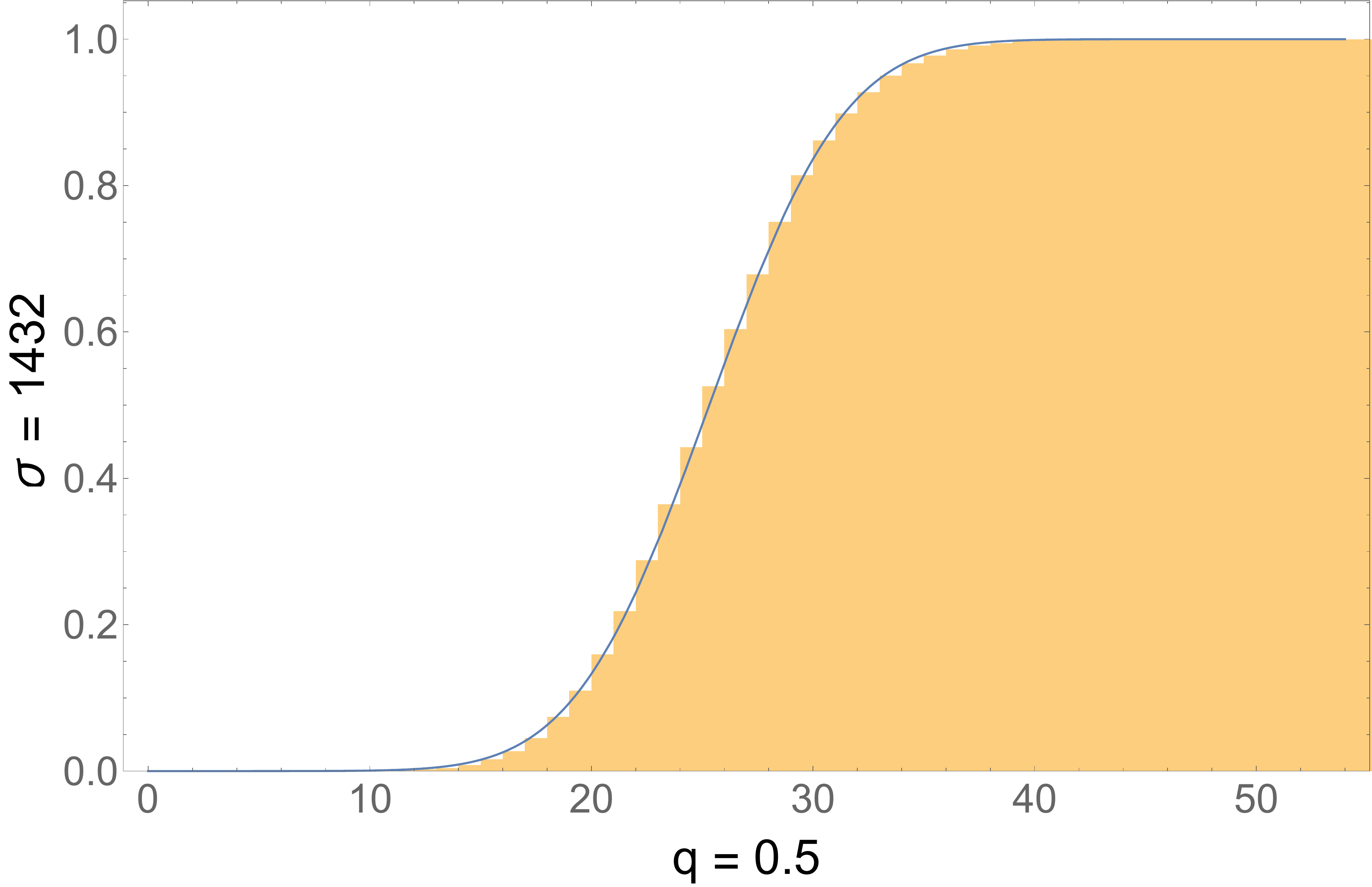} \qquad \includegraphics[scale=0.12]{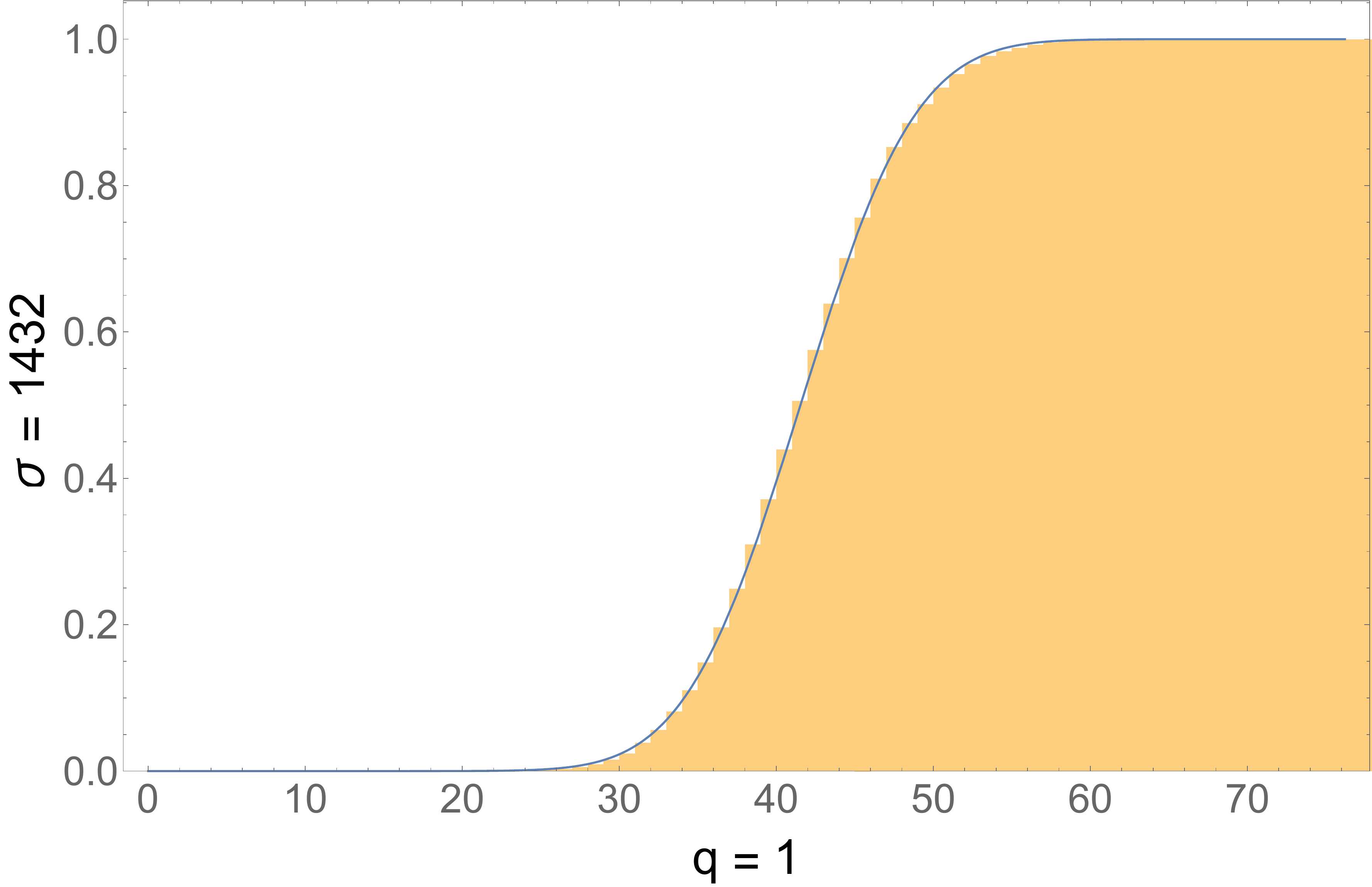} \\ \vskip .2in
 \includegraphics[scale=0.12]{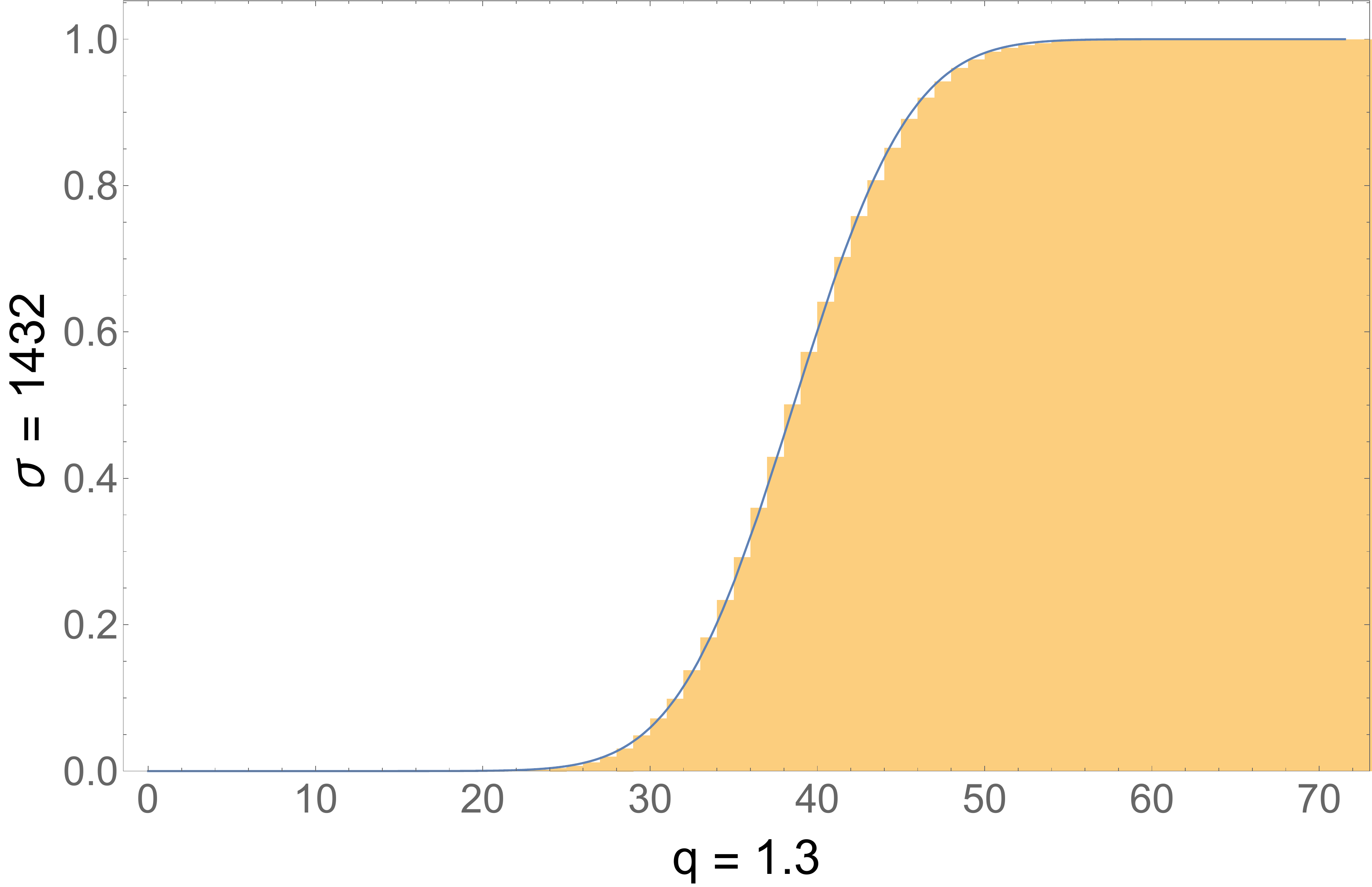} \qquad \includegraphics[scale=0.12]{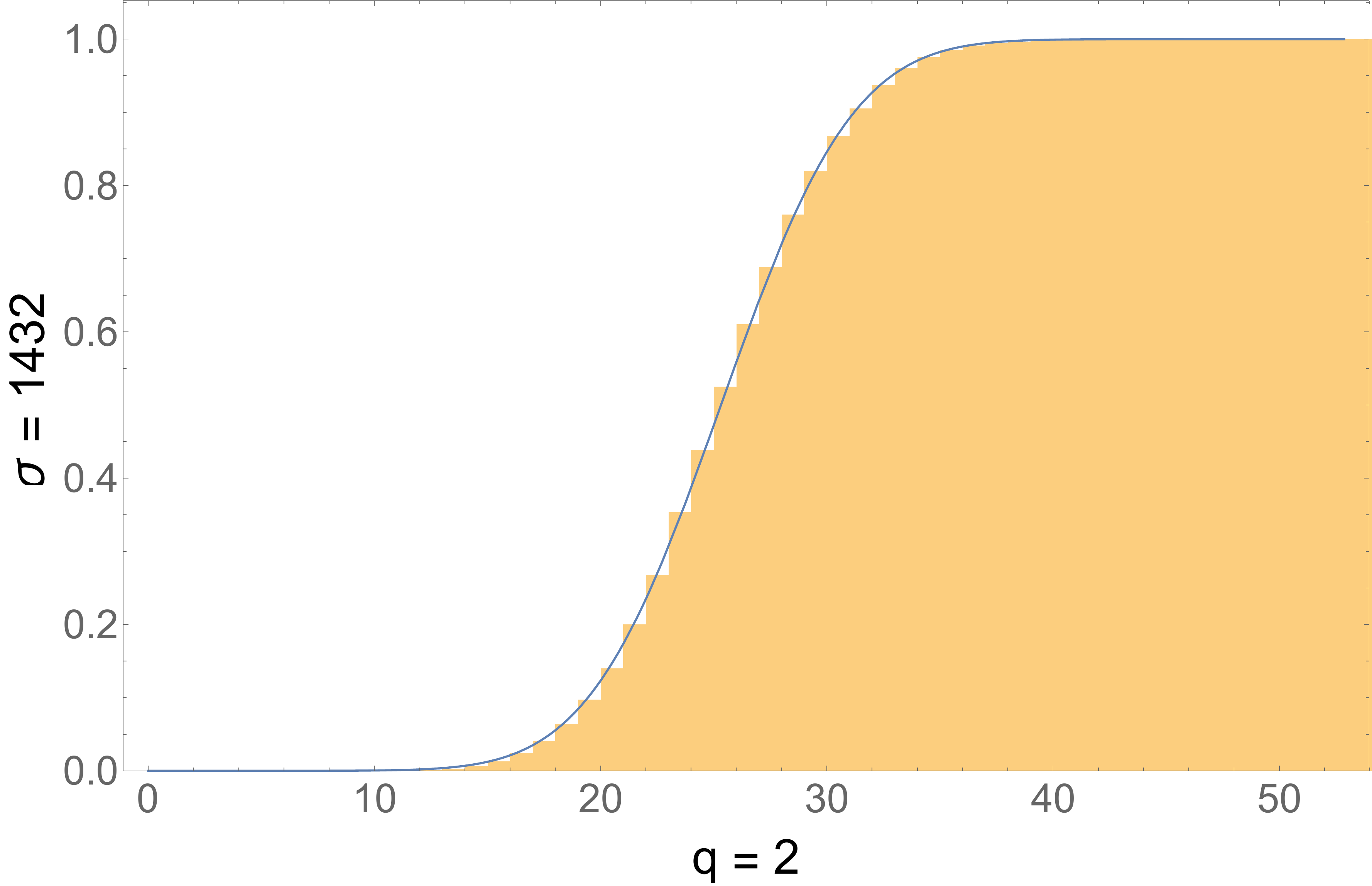} \qquad \includegraphics[scale=0.12]{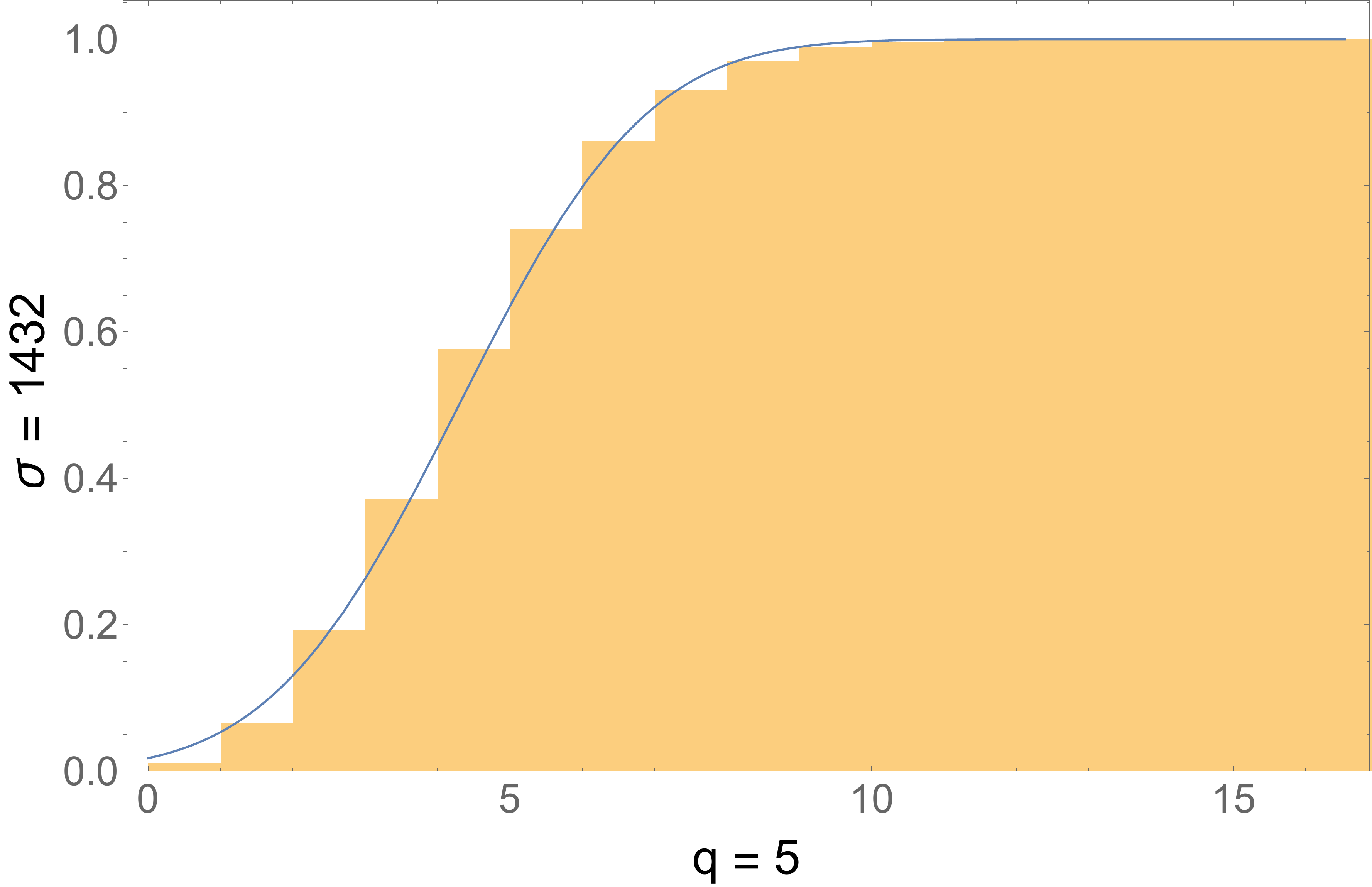}  
\caption{
Each plot contains a histogram (the shaded region) for the cumulative sum of the number of occurrences of the pattern $1432$ in a random sample of size~$10^4$ generated from the Mallows$(q)$ distribution using $n=1000$, plotted with the cumulative normal distribution function implied by Theorem~\ref{theorem:Stein}, for values of $q = 0.2, 0.5, 1, 1.3, 2, 5$, as indicated below each plot. 
}
\label{CLT:figure}
\end{figure}

\subsection{Comparison of patterns $1432$, $2341$ and $2413$}

In this section, we compare the patterns $1432$, $2341$, and $2413$, each of which has 3 inversions. 
Thus, we expect \emph{on average} the same number of these patterns in a random permutation, that is, $\e\, N_n(1432,q) = \e\, N_n(2341,q) = \e\, N_n(2413,q)$, for all $q>0$ and $n \geq 1$. 
However, the variance of the number of occurrences, denoted $\mbox{Var}(N_n(\sigma,q))$, differs for each pattern~$\sigma$. 
To see why this is the case, we note that the only difference between patterns in the formula for $b_n(\sigma,q)^2$ in Equation~\eqref{b:def} is in the terms $T(s,\sigma,q)$, $1 \leq s \leq m-1$, where $T(s,\sigma,q)$ is the probability that in a random permutation of length~$2m-s$, the pattern $\sigma$ occurs at the start and at the end (with possibly more occurrences allowed).  
We showed previously in Section~\ref{sect:1432} and in Table~\ref{Ov:1432} that for pattern $1432$ we have $\Ov_2(1432) = \Ov_3(1432) = \emptyset$, whence
\[ T(2,1432,q) = T(3,1432,q) = 0, \]
and also 
\[ T(1,1432,q) =  \frac{q^{12}+q^{11}+2 q^{10}+2 q^9+2 q^8+q^7+q^6}{[7]_q!}. \]
In fact, it is clear (see~\cite[Lemma~9]{Perarnau}) that, unless $\sigma\in\S_m$ is monotone, $\Ov_{m-1}(\sigma) = \emptyset$. Thus, we need only consider $T(s,\sigma,q)$ for $s=1,2$ in this section. 

Continuing for pattern~$2341$, we have $\Ov_2(2341) = \emptyset$, so $T(2,2341,q) = 0$;
also, by Table~\ref{Ov:2341}, which enumerates the elements of $\Ov(2341)$, we have 
\[ T(1,2341,q) = \frac{q^{15}+q^{14}+2 q^{13}+2 q^{12}+2 q^{11}+q^{10}+q^9}{[7]_q!}. \]
At this point, it is apparent that when $q=1$, the variances of $N_n(1432,q)$ and $N_n(2341,q)$ are in fact the same, since $|\Ov_s(1432)| = |\Ov_s(2341)|$ for all $1 \leq s \leq 3$. 
In addition, since the reversal of $1432$ is $2341$, by~\eqref{eq:reversal} we also have $b_n(1432,q) = b_n(2341,1/q)$ for all $q>0$, which is apparent by the symmetry about the vertical axis in Figure~\ref{stddev:compare} between the two corresponding curves. 

Finally, the pattern 2413 is distinct from the previous two patterns in the sense that $\Ov_2(2413) = \{362514, 462513\}$.  By Table~\ref{Ov:2413}, we have 
\[ T(2,2413,q) = \frac{q^{10}+q^9}{[6]_q!}, \]
\[ T(3,2413,q) = \frac{q^{11}+2 q^{10}+3 q^9+2 q^8+q^7}{[7]_q!}. \]
We have found that $b_{100}(2413,q)$ and $b_{100}(1432,q)$ cross at the point $x_{100} \approx  -0.2519754$, 
corresponding to a value of $q_{100} \approx 0.5974755$, 
which gives $b_{100}(2413,q_0) = b_{100}(1432,q_0) \approx 1.641219$ 
(see Figure~\ref{stddev:compare}).
As $n$ grows, this intersection appears to achieve a limiting value of $x_\infty \approx -0.2510049$, 
corresponding to $q_\infty \approx 0.5987148$, and taking $n$ large we have 
\[ \frac{b_n(1432,q_\infty)}{\sqrt{n}} \to 0.1667240. \] 
In addition, we observe numerically that the curve $b_{100}(2413,q)$ always stays strictly above $b_{100}(2341,q)$ for $q>0$.

\begin{figure}[htb]
\centering\includegraphics[height=6cm]{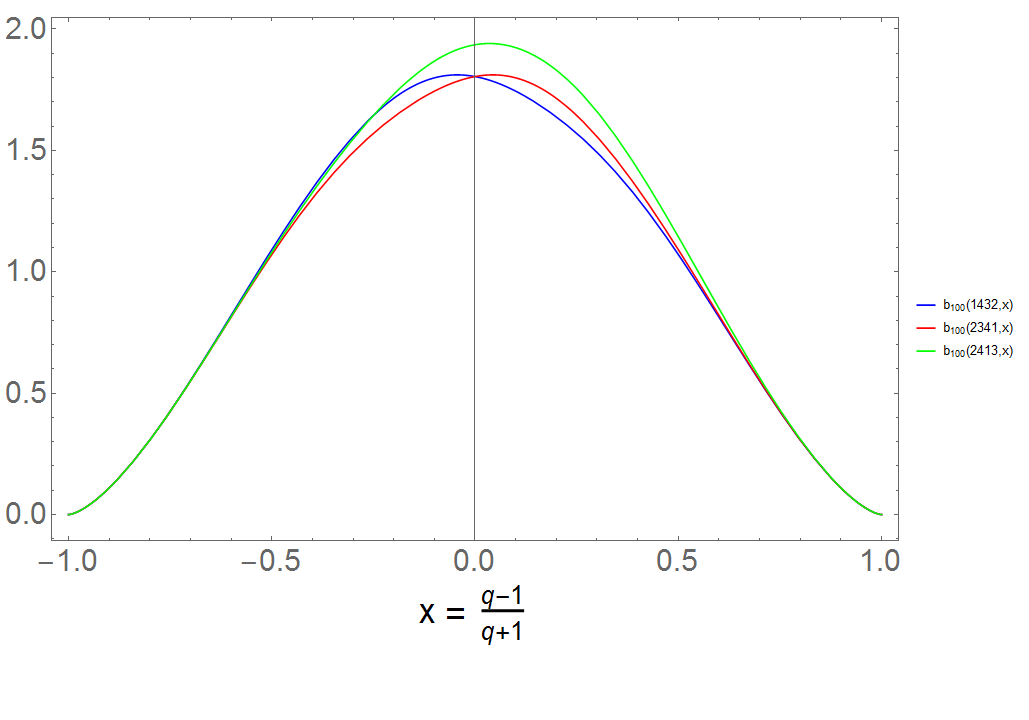}
\caption{A comparison of the standard deviations of $N_{100}(\sigma, q)$ for $\sigma = 1432$ {\color[rgb]{0,0,1} (blue)}, $2341$ {\color[rgb]{1,0,0} (red)}, $2413$ {\color[rgb]{0,1,0} (green)}.}
\label{stddev:compare}
\end{figure}

\end{document}